\documentclass[a4paper,12pt]{amsart}
\usepackage[left=2.5cm, right=2.5cm, top=3cm, bottom=3cm]{geometry}
\usepackage[colorlinks=true,linkcolor=red,citecolor=blue]{hyperref}

\usepackage{amsmath,amscd,amsthm,amssymb,mathtools}
\usepackage{color}
\usepackage[utf8]{inputenc}
\normalfont
\usepackage[T1]{fontenc}
\usepackage{tikz-cd}
\usepackage{tikz}
\usepackage{enumerate}
\usepackage{comment}
\usepackage{mathabx}
 \usepackage[all]{xy}

\usetikzlibrary{decorations.pathmorphing,shapes}
\usetikzlibrary{arrows.meta}
\tikzcdset{arrow style=tikz,
    squigarrow/.style={
        decoration={
        snake, 
        amplitude=.4mm,
        segment length=2mm
        }, 
        rounded corners=.2pt,
        decorate
        }
    }

\newtheorem{theo}{Theorem}[section]
\newtheorem{lemm}[theo]{Lemma}
\newtheorem{prop}[theo]{Proposition}
\newtheorem{coro}[theo]{Corollary}

\theoremstyle{definition}
\newtheorem{defi}[theo]{Definition}
\newtheorem{rema}[theo]{Remark}

\newtheorem{exam}[theo]{Example}

\theoremstyle{theorem}

\newcommand{\CC}{\mathbb{C}}

\newcommand{\NN}{\mathbb{N}}

\newcommand{\PP}{\mathbb{P}}
\newcommand{\QQ}{\mathbb{Q}}
\newcommand{\RR}{\mathbb{R}}

\newcommand{\ZZ}{\mathbb{Z}}
\newcommand{\Aa}{\mathcal{A}}

\newcommand{\Cc}{\mathcal{C}}
\newcommand{\Dd}{\mathcal{D}}

\newcommand{\Mm}{\mathcal{M}}

\newcommand{\Oo}{\mathcal{O}}
\newcommand{\Pp}{\mathcal{P}}

\newcommand{\Zz}{\mathcal{Z}}

\newcommand{\Ho}{\mathrm{Ho}}

\newcommand{\Img}{\mathrm{Im}}

\newcommand{\ch}[1]{\mathrm{Ch^*({#1})}}

\newcommand{\MHS}{{\mathsf{MHS}}}
\newcommand{\MHC}{{\mathsf{MHC}}}
\newcommand{\MHD}{{\mathsf{MHD}}}

\newcommand{\Hom}{\mathrm{Hom}}

\title[Products in absolute Hodge cohomology and mixed Hodge formality]{Products in absolute Hodge cohomology and mixed Hodge formality}

\author{Pedro Magalhães}

\address[P. Magalhães]{Departament de Matemàtiques i Informàtica, Universitat de Barcelona\\
Gran Via 585}
\email{pedrommagalhaes1998@gmail.com}

\thanks{This work was supported by FCT - Fundação para a Ciência e Tecnologia, I.P. by project 2021.06151.BD with DOI identifier https://doi.org/10.54499/2021.06151.BD.
Partial financial support from the Spanish State Research Agency through projects PID2020-117971GB-C22, PID2024-155646NB-I00 and EUR2023-143450.}

\begin{document}

\maketitle 

\begin{abstract}
  Given a complex algebraic variety, we relate the product structure on its absolute Hodge cohomology with mixed Hodge formality, a notion of formality in the sense of rational homotopy, that takes into account the mixed Hodge structures present in complex algebraic geometry. Namely, we prove that mixed Hodge formality implies the splitting of absolute Hodge cohomology as a square-zero extension. We define and study certain products, akin to Massey products, which live in the absolute Hodge cohomology and obstruct its splitting. We also relate mixed Hodge formality to the existence of multiplicative Chow-Künneth decompositions.
\end{abstract}

\section{Introduction}
In rational homotopy, a fundamental construction is Sullivan's functor of piecewise linear forms. This sends any topological space $X$ to $\Aa_{pl}(X)$, a commutative differential graded algebra (cdga for short) defined over the rationals. It contains all the rational homotopy information of the space and its cohomology is just singular cohomology with rational coefficients. The algebra $\Aa_{pl}(X)$ may be thought of as a replacement of de Rham's cdga defined over $\RR$ for smooth manifolds. In fact, when $X$ is such a smooth manifold, these cdga's are quasi-isomorphic $\Aa_{pl}(X)\otimes\RR\simeq \Aa_{\mathrm{dR}}(X)$. When $X$ is a complex algebraic variety, then $\Aa_{pl}(X)$ is endowed with extra structure which contains complex geometric information about the variety \cite{DeHII}, \cite{DeHIII}, \cite{morgan}, \cite{navarro}. More precisely, there is a functor
\[ \mathsf{Var}_\CC \xrightarrow{\Aa} \Ho(\MHD), \]
where $\MHD$ denotes the category of \textit{mixed Hodge diagrams}. Its objects are partly given by a rational cdga with a weight filtration and a complex cdga with a Hodge filtration, which are quasi-isomorphic and induce \textit{mixed Hodge structures} on cohomology. Here $\Ho$ denotes the localization with respect to quasi-isomorphisms of mixed Hodge diagrams. In particular, there is a forgetful functor 
\[\MHD \to \mathsf{cdga}_\QQ, \]
which maps $\Aa(X)$ to $\Aa_{pl}(X)$. The cohomology of a mixed Hodge diagram is again a mixed Hodge diagram, and an interesting question to ask is whether 
\[\Aa(X) \cong H^*(X) \]
in $\Ho(\MHD)$. In this case, $X$ is said to be \textit{mixed Hodge formal}. If $X$ is a smooth projective variety, then $H^*(X)$ has pure Hodge structures, and thus $\Aa_{pl}(X)$ is formal \cite{DGMS}. However, not every smooth projective variety is mixed Hodge formal \cite{CCM}. By \cite{MHF}, there are successively defined obstructions to mixed Hodge formality
\[ \theta_k \in \mathrm{Harr}_{\mathrm{DB}}^{k}(H^*(X)) \qquad \text{ for } k \geq 3, \]
where $\mathrm{Harr}_{\mathrm{DB}}^{k}(H^*(X))$ is a variant of Harrison cohomology related to the notion of \textit{absolute Hodge cohomology}, introduced by Beilinson \cite{Beilinson} (see also \cite{DBEsnVie} for a basic reference on the subject). In this work, we will refer to absolute Hodge cohomology as \textit{Deligne-Beilinson cohomology}. In the literature, this terminology is sometimes used for a weaker notion where one forgets the weight filtration. Given a complex algebraic variety $X$, the Deligne Beilinson cohomology of $X$
\[ H_{\mathrm{DB}}^*(X) = \bigoplus_{p \geq 0} H_{\mathrm{DB}}^*(X,p) \]
is a bigraded group that refines singular cohomology. Indeed, for $p = 0$, the group $H_{\mathrm{DB}}^*(X,0)$ is isomorphic to $H^*(X;\ZZ)$ and for higher values of $p$, $H_{\mathrm{DB}}^*(X,p)$ captures extra complex geometric information of $X$. For instance, when $X$ is smooth and projective, Deligne-Beilinson cohomology reduces to the notion of Deligne cohomology and, in particular, for $p=1$, we have
\[ H_{\mathrm{DB}}^n(X,1) \cong H^{n-1}(X,\Oo^*_X), \]
where $\Oo^*_X$ is the sheaf of invertible holomorphic functions on $X$. In particular, $H^2_{\mathrm{DB}}(X,1)$ is the Picard group of $X$. Moreover, $H^2_{\mathrm{DB}}(X,2)$ is isomorphic to the group of isomorphism classes of line bundles with a connection on $X$ \cite{Beilinson2}. For higher values of $p$, there are analogous classification results \cite{DcohoBryl} \cite{GeoDcohoGajer}. Here, we work with coefficients in a subfield $\Bbbk \subset \RR$, instead of $\ZZ$. An important property of Deligne-Beilinson cohomology is that it fits in a short exact sequence of $\Bbbk$-vector spaces:
\[ 0 \to J^p H^{2p-1+*}(X) \to H_{\mathrm{DB}}^*(X,p) \to \mathrm{Hdg}^{2p+*}(X,p) \to 0, \]
where 
\begin{align*}
  &\mathrm{Hdg}^*(X,p) := (2 \pi i)^p W_{2p} H^*(X;\Bbbk) \cap F^p W_{2p} H^*(X;\CC), \\
  &J^p H^*(X) = \frac{W_{2p}H^*(X;\CC)}{W_{2p} H^*(X;\Bbbk) + F^p W_{2p} H^*(X;\CC)}.
\end{align*}
Here $W$ and $F$ denote the weight and Hodge filtrations of the mixed Hodge structure on $H^*(X)$. Let us also denote
\begin{align*}
  &\mathrm{Hdg}^*(X) := \bigoplus_p \mathrm{Hdg}^*(X,p)[2p]\\
  &J^p H^*(X) = \bigoplus_p J^p H^*(X)[2p-1].
\end{align*}
In particular, when $X$ is smooth and projective, $\mathrm{Hdg}^0(X)$ is the space of Hodge classes of $X$ and $\mathrm{Hdg}^{<0}(X) = 0$. The product of $H^*(X)$ makes $\mathrm{Hdg}^*(X)$ into a commutative algebra and $JH^*(X)$ a $\mathrm{Hdg}^*(X)$-module. Furthermore, there is an associative and graded commutative product on $H^*_{\mathrm{DB}}(X)$ which makes it a square-zero extension
\begin{equation}
  \label{sqzeroext-intro-eq}
  0 \to JH^*(X) \to H_{\mathrm{DB}}^*(X) \to \mathrm{Hdg}^*(X) \to 0. 
\end{equation}
Let now $X$ be a smooth complex projective variety. The homotopy category of mixed Hodge complexes is equivalent to $D(\MHS)$ the derived $1$-category of mixed Hodge structures, as symmetric monoidal categories. The mixed Hodge diagram $\Aa(X)$ is, thus, in particular a commutative algebra in $D(\MHS)$. Our first main result gives a characterization of the first obstruction $\theta_3$ to mixed Hodge formality. We prove:
\begin{theo}
  The class $\theta_3$ is the only obstruction to the existence of an isomorphism $\Aa(X) \cong H^*(X)$, seen as algebras in the derived category of mixed Hodge structures $D(\MHS)$ together with the derived tensor product.
\end{theo}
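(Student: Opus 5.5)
The plan is to exploit the fact that $\MHS$ has cohomological dimension one, i.e. $\mathrm{Ext}^i_{\MHS}=0$ for $i\geq 2$, together with purity of $H^*(X)$ for $X$ smooth projective. First I would show that, as an object of $D(\MHS)$, $\Aa(X)$ splits: any complex over a hereditary abelian category is isomorphic to the sum of its shifted cohomologies. This gives an isomorphism $\phi\colon \Aa(X)\cong \bigoplus_n H^n(X)[-n]$ that induces the identity on cohomology. Transporting the multiplication $\mu$ of $\Aa(X)$ along $\phi$, I would decompose it into components
\[ \mu_{p,q}^n \in \mathrm{Hom}_{D(\MHS)}\big(H^p\otimes H^q[-p-q],\,H^n[-n]\big)=\mathrm{Ext}^{p+q-n}_{\MHS}(H^p\otimes H^q,H^n). \]
Since $\mathrm{Ext}^{\geq 2}=0$, only $n=p+q$ and $n=p+q-1$ survive. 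The first component is the cup product. The second, $m$, lies in $\mathrm{Ext}^1(H^p\otimes H^q,H^{p+q-1})$, which is the group $J$ of the pure weight $-1$ structure $\underline{\mathrm{Hom}}(H^p\otimes H^q,H^{p+q-1})$. This group is in general nonzero, which is exactly where an obstruction can live.

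Next I would translate the algebra axioms into conditions on $m$. Associativity of $\mu=\cup+m$ holds in the homotopy category. Expanding it, the terms quadratic in $m$ are Yoneda composites of two $\mathrm{Ext}^1$ classes, so they lie in $\mathrm{Ext}^2=0$. The remaining terms say that $m$ is a Hochschild 2-cocycle of $(H^*,\cup)$ with values in the $\mathrm{Ext}^1$-groups. Commutativity makes $m$ graded symmetric, so $m$ gives a Harrison 2-cocycle, and similarly unitality normalizes it.

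The ambiguity comes from changing $\phi$. The automorphisms of $\bigoplus H^n[-n]$ inducing the identity on cohomology are $1+f$ with $f\in\bigoplus_n\mathrm{Ext}^1(H^n,H^{n-1})$. Conjugating $\mu$ by $1+f$ changes $m$ by the Hochschild coboundary of $f$, again because all quadratic terms in $f$ lie in $\mathrm{Ext}^2=0$. Hence the class $[m]$ is well defined, and it vanishes if and only if some $\phi$ makes the transported product equal to $\cup$. That last condition is exactly an isomorphism $\Aa(X)\cong H^*(X)$ of algebras in $D(\MHS)$.

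The main obstacle is identifying this class with $\theta_3$. Concretely, I would identify the Harrison group with $\mathrm{Ext}^1$-coefficients, in the appropriate bidegree, with $\mathrm{Harr}^3_{\mathrm{DB}}(H^*(X))$. I would then check that the construction of $\theta_3$ in \cite{MHF} agrees with $[m]$. That construction uses a minimal model or a $C_\infty$-structure in mixed Hodge complexes, and there $\theta_3$ arises from the failure of the splitting to be multiplicative at the first stage.

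To compare them I would model $D(\MHS)$ by mixed Hodge complexes. I would choose a splitting of $\Aa(X)$ as a mixed Hodge complex, with homotopies realizing the $\mathrm{Ext}^1$ classes as the "$F$ versus $\Bbbk$" discrepancy. Then I would read off $m$ as the binary component that comes from the homotopy transfer. This component is precisely the cocycle defining $\theta_3$ in the Deligne–Beilinson variant of Harrison cohomology. I expect this dictionary between $\mathrm{Ext}^1_{\MHS}$ and the Deligne–Beilinson complex, via $\mathrm{Ext}^1(\Bbbk(0),V)\cong J(V)$ for $V$ of negative weight, to take the most care.
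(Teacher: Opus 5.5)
Your proposal is correct and follows essentially the paper's route. The paper also measures the failure of an isomorphism $H^*(A)\cong A$ in $D(\MHS)$ (inducing the identity on cohomology) to be multiplicative by a class in $\mathrm{Ext}^1_{\MHS}(H^*(A)^{\otimes 2},H^*(A))^{-1}$, shows this class changes by a coboundary when the isomorphism changes, and identifies it with $\pi(\theta_3)$, which determines $\theta_3$ because $\pi$ is injective for pure $H^*(X)$. The identification you leave as a sketch is made concrete there by passing to the cdga in mixed Hodge structures $\Omega\tilde{\mathrm{B}}(H^*(A)_\infty)$: the inclusion of $H^*(A)$ is strict, and one reads off $\gamma=-\hat{\varphi}_2$ because $[a|b]-\hat{\varphi}_2(a,b)$ lies in the Hodge filtration and bounds $a\cdot b-a\otimes b$.
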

Note that this is a considerably weaker condition than mixed Hodge formality, since the category of algebras in $D(\MHS)$ does not see the higher coherence conditions present in the homotopy category of $\MHD$. As a consequence, the vanishing of the first obstruction $\theta_3$ implies that the extension (\ref{sqzeroext-intro-eq}) is split. We also use this characterization of $\theta_3$ to show that that the existence of a \textit{multiplicative Chow-Künneth decomposition} on $X$ implies the vanishing of $\theta_3$.

We then define Massey-like products in Deligne-Beilinson cohomology which obstruct the splitting of $H_\mathrm{DB}(X)$ as a square-zero extension. Given classes $[a], [b] \in \mathrm{Hdg}^*(X)$ such that $[a][b] = 0$, we define the \textit{double Deligne-Beilinson Massey product} of $[a]$ and $[b]$:
\[ \langle [a], [b]  \rangle_{\mathrm{DB}} \in H^*_{\mathrm{DB}}(X). \]
These products are defined for general complex algebraic varieties and obstruct mixed Hodge formality. More concretely, if $H^*_{\mathrm{DB}}(X)$ is a split square-zero extension, then all double Deligne-Beilinson Massey products vanish. If $X$ is smooth and projective and $[a]$ and $[b]$ are the cohomology classes associated to algebraic cycles $A$ and $B$, we show that 
\[ \langle [a], [b]  \rangle_{\mathrm{DB}}  = \mathcal{AJ}(A \cdot B), \]
where $\mathcal{AJ}$ is the Abel-Jacobi map. 

We also review the notion of triple DB Massey products, which are triple Massey products in a certain cdga whose cohomology is $H_\mathrm{DB}(X)$, and prove that they vanish when the second obstruction $\theta_4$ vanishes. Unlike the double DB Massey products, these higher products - and variants of them - have been studied before. In \cite{Den}, Deninger defines triple Massey products in real Deligne cohomology and computes non-trivial ones on an elliptic curve. In \cite{Wen}, Wenger generalizes these products to absolute cohomology theories. Then, in \cite{MPDBcoho-thesis}, Schwarzhaupt studies such products for cohomology classes arising from algebraic cycles, in the setting of integer-valued Deligne-Beilinson cohomology of smooth complex projective varieties. Products of this type are also studied in \cite{polysymb}. In \cite{MPDifcohostack}, Massey products are further generalised to differential cohomology theories, using the language of higher stacks. In this work, instead of constructing such triple Massey products directly for algebraic varieties (as in the articles previously mentioned), we define them for mixed Hodge diagrams. We also define triple DB Massey products only for Hodge classes and not general classes in Deligne-Beilinson cohomology, as these products for Hodge classes are more naturally related to mixed Hodge formality.

In complex geometry, there is another notion triple Massey products, introduced by Angella and Tomassini \cite{DaniToma} for complex manifolds, called triple ABC Massey products. These products obstruct strong formality, a natural notion of formality in the setting of pluripotential homotopy theory \cite{Stel-pluri}. There are many new results on the (non)-vanishing of triple ABC Massey products (see \cite{SfTo}, \cite{Steletall-nonformal}, \cite{StelGioZoller}, \cite{StelMar-Mer}). As observed in \cite{Stel-pluri}, for compact Kähler manifolds, strong formality implies mixed Hodge formality (for mixed Hodge structures with coefficients in $\RR$). One should thus expect there to be a relation between triple ABC Massey products and triple DB Massey products, in this setting. As communicated to us by Jonas Stelzig and Dan Petersen, such products coincide. This is also a consequence of the fact the second obstruction $\theta_4$ evaluates to both these types of products, as mentioned in Remark \ref{ABC=DB-rema}.
\medskip

\noindent
\textbf{\textbf{Organization of the paper:}} We start by recalling basic definitions of mixed Hodge structures and mixed Hodge complexes. In section \ref{MHD-sec}, we review the notions of mixed Hodge diagrams, their homotopy models and the theory of obstructions to mixed Hodge formality introduced in \cite{MHF}. In section \ref{DBcoho-sec}, we review the notion of Deligne Beilinson cohomology for general mixed Hodge complexes and the complexes computing it $C_\mathrm{DB}(-)$ and $P_\mathrm{DB}(-)$. Here, we compute the homotopy transfer of the commutative product on the latter to the former. In section \ref{SplitDBcoho-sec}, we prove Theorem \ref{phi2-theo}, giving a characterization of the first obstruction $\theta_3$ to mixed Hodge formality. We then define, in section \ref{DBprods-sec}, double and triple products in Deligne-Beilinson cohomology and relate them to mixed Hodge formality. In the final section, we get back to complex geometry and compute double Deligne-Beilinson products of algebraic cycles in a smooth complex projective variety $X$, in terms of the Abel-Jacobi map. We finish by relating the existence of a multiplicative Chow-Künneth decomposition on $X$ with mixed Hodge formality of $X$.

\medskip

\noindent
\textbf{\textbf{Acknowledgments:}} The author would like to thank Joana Cirici for introducing him to the ideas around mixed Hodge formality and for reviewing this text. The author would also like to thank Jonas Stelzig for all the helpful discussions and for pointing him to the notion of absolute Hodge cohomology. Finally, the author would like to thank Dan Petersen, who pointed him to the relation between multiplicative Chow-Künneth decompositions and mixed Hodge formality. 

\section{Mixed Hodge complexes}
We start by reviewing the concepts of mixed Hodge structures and mixed Hodge complexes. We refer to \cite{PS-mhs} for a detailed exposition of the subject.
\subsection{Mixed Hodge structures}
Let $\Bbbk$ be a subfield of $\RR$.
\begin{defi}
  A $\Bbbk$-\textit{pure Hodge structure} of weight $n \in \ZZ$ is a pair $(V,F)$, where $V$ is a finite-dimensional $\Bbbk$-module and $F$ is a descending filtration on $V_\CC = V \otimes \CC$ such that
  \[ V_\CC \cong F^p V_\CC \oplus \overline{F^{n-p+1}V_\CC},\]
  for $p \in \ZZ$, where $(\overline{-})$ denotes complex conjugation.
\end{defi}

\begin{defi}
  A $\Bbbk$-\textit{mixed Hodge structure} is a triple $(V,W,F)$ consisting of a finite dimensional $\Bbbk$-module $V$, an increasing filtration $W$ on $V$, called the
  \textit{weight filtration} and a decreasing filtration $F$ on $V_\CC$, called
  the \textit{Hodge filtration}, such that on
  \begin{equation*}
    Gr_n^W(V) = W_n V / W_{n-1} V,
  \end{equation*}
  the filtration $F$ induces a pure Hodge structure of weight $n$, for any $n \in \ZZ$.
\end{defi}
A morphism of $\Bbbk$-mixed Hodge structures is a morphism of $\Bbbk$-modules that preserves both filtrations. In the following, we may drop the $\Bbbk$- from the notation when the field is irrelevant for the discussion.
\begin{rema}
    Note that when the weight filtration is of the form 
    \[ W_{n-1} = 0 \subset W_n = V, \]
    for some $n \in \ZZ$, then the mixed Hodge structure $(V,W,F)$ is just a pure Hodge structure of weight $n$.
\end{rema}
The category $\MHS_\Bbbk$ of $\Bbbk$-mixed Hodge structures is abelian closed symmetric monoidal with the tensor product and inner hom of $\Bbbk$-modules together with the induced filtrations:
\begin{align}
  \label{homfil-eq}
  &W_n(A \otimes_\Bbbk B) = \bigoplus_{i+j = n} W_i A \otimes_\Bbbk W_j B  \nonumber \\
  &W_n \, \underline{\Hom}_\Bbbk(A,B) = \{ f \in \underline{\Hom}_\Bbbk(A,B) | f(W_i A) \subset W_{i+n}B \}.
\end{align} 
The category $\MHS_\Bbbk$ has non-trivial extensions:
\begin{prop}[\cite{mhsext-carlson}, see also \cite{PS-mhs}]
  \label{mh-ext}
  Given $A,B \in \MHS_\Bbbk$, 
  \begin{equation*}
    \mathrm{Ext}_{\MHS_\Bbbk}^1(A,B) \cong\frac{W_0\Hom_{\CC}(A_\CC,B_\CC)}{W_0 \Hom_{\Bbbk}^W(A,B) + W_0F^0\Hom_\CC(A_\CC,B_\CC)}.
  \end{equation*}
  Furthermore, $\mathrm{Ext}_{\MHS_\Bbbk}^n(A,B) = 0$ for $n \geq 2$. \vspace{1ex} \\
\end{prop}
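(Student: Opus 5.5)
We prove Proposition \ref{mh-ext} (Carlson's computation of extensions of mixed Hodge structures) by reducing to $\mathrm{Ext}^n(\Bbbk(0),H)$ with $H=\underline{\Hom}(A,B)$. Since $\MHS_\Bbbk$ is closed symmetric monoidal with exact internal hom, and every object is dualizable (finite-dimensional, $A^\vee\otimes B\cong\underline{\Hom}(A,B)$), we have $\mathrm{Ext}^n(A,B)\cong\mathrm{Ext}^n(\Bbbk(0),\underline{\Hom}(A,B))$. This holds because $-\otimes A^\vee$ is exact with exact adjoint $-\otimes A$, so it transports Yoneda extensions back and forth. So it suffices to compute $\mathrm{Ext}^n(\Bbbk(0),H)$ for an arbitrary mixed Hodge structure $H$. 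In that case the formula reads $W_0H_\CC/(W_0H_\Bbbk+F^0W_0H_\CC)$, using the filtrations (\ref{homfil-eq}) on $H$.

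\textbf{Step 1 ($\mathrm{Ext}^1$).} Take an extension $0\to H\to E\to \Bbbk(0)\to 0$. Choose a lift $e_\Bbbk\in W_0E_\Bbbk$ of $1$; this is possible because the maps are strict for $W$. Likewise choose a lift $e_F\in F^0W_0E_\CC$, using strictness for $F$ on $W_0$. The difference $e_\Bbbk-e_F$ lies in $W_0H_\CC$. Changing the choices alters it exactly by $W_0H_\Bbbk+F^0W_0H_\CC$. So we get a well-defined class, and it is additive under Baer sum.

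For injectivity: if the class vanishes, we can choose a common lift $e\in W_0E_\Bbbk\cap F^0E_\CC$. Then $1\mapsto e$ is a morphism of mixed Hodge structures splitting the sequence.

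For surjectivity: given $h\in W_0H_\CC$, set $E=H\oplus\Bbbk e$. Put $W_nE=W_nH\oplus\Bbbk e$ for $n\ge0$, and $W_nE=W_nH$ otherwise. Put $F^pE_\CC=F^pH_\CC\oplus\CC(e-h)$ for $p\le0$, and $F^pE_\CC=F^pH_\CC$ otherwise. Then check that each $Gr^W_n E$ is pure. This is clear for $n\neq0$. For $n=0$ it holds because $h\in W_0$, so on $Gr_0^W$ the vector $e-h$ agrees with $e$ modulo $Gr_0^WH$ shifted appropriately. More carefully, $Gr^W_0E\cong Gr^W_0H\oplus\Bbbk(0)$ with $F$ a sum up to the image of $h$; the term $h$ in $Gr_0^W$ is absorbed, since $Gr_0^W H_\CC=F^0\oplus\overline{F^1}$, and one adjusts $h$ modulo $F^0$.

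\textbf{Step 2 (vanishing of $\mathrm{Ext}^{\ge2}$).} This is the main obstacle, and the approach I would try first is the following. Realise $\mathrm{RHom}(\Bbbk(0),H)$ by the explicit two-term complex
\[ W_0H_\Bbbk\oplus F^0W_0H_\CC\xrightarrow{\ (x,y)\mapsto x-y\ } W_0H_\CC .\]
Its $H^0$ is $\Hom(\Bbbk(0),H)$ and its $H^1$ is the quotient from Step 1. One must show this complex computes Yoneda Ext, for instance by verifying that it is an effaceable $\delta$-functor in $H$. The functor $H\mapsto$ (the complex) is exact in $H$, by strictness of morphisms with respect to $W$ and $F$. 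It therefore yields a long exact sequence that ends at degree $1$, so $H^1$ is right exact in $H$. A universal $\delta$-functor with $T^1$ right exact forces $\mathrm{Ext}^2$ to be effaceable. Concretely, every $\mathrm{Ext}^1$ class dies after pushing out along a suitable monomorphism, which gives $\mathrm{Ext}^2(\Bbbk(0),H)=0$ by dimension shifting, and induction gives $\mathrm{Ext}^{n}=0$ for all $n\ge2$. Alternatively, I would use Yoneda splicing directly: a 2-extension is the product of two 1-extensions, and right exactness of $\mathrm{Ext}^1(\Bbbk(0),-)$ shows the connecting map kills such products.
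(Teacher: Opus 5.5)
The paper gives no proof of this proposition; it quotes it from Carlson and Peters--Steenbrink, so there is nothing internal to compare against. Your outline is the standard argument and is correct in substance: reduce to $\mathrm{Ext}^\bullet(\Bbbk(0),\underline{\Hom}(A,B))$ by rigidity, classify $1$-extensions by the difference $e_\Bbbk-e_F$ of a rational lift and a Hodge lift, and kill $\mathrm{Ext}^{\geq 2}$ using right exactness of $\mathrm{Ext}^1(\Bbbk(0),-)$. Two passages need repair.

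\textbf{Purity of $Gr^W_0E$ in the surjectivity step.} The claimed splitting $Gr^W_0E\cong Gr^W_0H\oplus\Bbbk(0)$ is false in general for $\Bbbk=\QQ$. That extension of weight-$0$ structures has class equal to the image of $h$ in $Gr^W_0H_\CC/(Gr^W_0H_\Bbbk+F^0Gr^W_0H_\CC)$. This is nonzero for suitable $h$ when, say, $Gr^W_0H$ has type $(1,-1)+(-1,1)$.

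No adjustment of $h$ is needed anyway. Write $G=Gr^W_0H_\CC$ and let $h_0$ be the image of $h$ in $G$. Then $F^pGr^W_0E_\CC=F^pG\oplus\CC(e-h_0)$ for $p\le 0$, and $F^pGr^W_0E_\CC=F^pG$ for $p\ge 1$. Since $e$ is defined over $\Bbbk$, conjugation sends $e-h_0$ to $e-\overline{h_0}$. Hence $F^p+\overline{F^{1-p}}=G\oplus\CC e$ for every $p$. The sum is direct, because comparing $e$-coefficients reduces it to $G=F^pG\oplus\overline{F^{1-p}G}$. So $Gr^W_0E$ is pure of weight $0$ for every $h\in W_0H_\CC$.

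\textbf{Vanishing of $\mathrm{Ext}^{\geq 2}$.} Your first formulation does not work as written. The sentence ``a universal $\delta$-functor with $T^1$ right exact forces $\mathrm{Ext}^2$ to be effaceable'' is not a valid inference. Also, every $\mathrm{Ext}^1$ class dies after pushout along a monomorphism in \emph{any} abelian category, so that fact alone cannot give $\mathrm{Ext}^2=0$.

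Make your ``alternative'' the main proof; it is complete once right exactness of $\mathrm{Ext}^1(\Bbbk(0),-)$ is established. That follows directly from Step 1: your class is natural under pushout, and $W_0H_\CC\to W_0H''_\CC$ is onto for an epimorphism $H\to H''$ by strictness. Exactness of the whole complex $C(H)$ is not needed. Then argue as follows.
\begin{itemize}
\item[(a)] Take a $2$-extension $0\to H\to E_1\to E_2\to\Bbbk(0)\to 0$ and set $K=E_1/H$.
\item[(b)] Its class is $\delta(\beta)$, where $\beta=[0\to K\to E_2\to\Bbbk(0)\to 0]$ and $\delta$ is the connecting map of $0\to H\to E_1\to K\to 0$.
\item[(c)] The sequence $\mathrm{Ext}^1(\Bbbk(0),E_1)\to\mathrm{Ext}^1(\Bbbk(0),K)\xrightarrow{\delta}\mathrm{Ext}^2(\Bbbk(0),H)$ is exact and its first map is surjective, so $\delta=0$.
\item[(d)] For $n\ge 3$, split off the last $2$-extension, which lies in some $\mathrm{Ext}^2(\Bbbk(0),K')=0$.
\end{itemize}

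If you prefer the $\delta$-functor route, state it this way. The functor $H\mapsto H^\bullet(C(H))$ is effaceable in positive degrees: in degree $1$ because it agrees with $\mathrm{Ext}^1$, in higher degrees because it vanishes. Hence it is universal. Yoneda $\mathrm{Ext}^\bullet(\Bbbk(0),-)$ is also universal, but this needs Buchsbaum's theorem, since $\MHS_\Bbbk$ has no nonzero injectives. The two $\delta$-functors then agree in all degrees.
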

\begin{defi}
    A mixed Hodge structure is said to be \textit{split} if it is isomorphic to a direct sum of pure Hodge structures of possibly different weights.
\end{defi}
\begin{rema}
    \label{delsplit-rema}
    Although many mixed Hodge structures are not split, for any $(V,W,F)$ there is a splitting of $V_\CC \cong \oplus I^{p,q}$, called the \textit{Deligne splitting}. It satisfies
    \[ \overline{I^{p,q}} \equiv I^{q,p} \quad (\mathrm{mod} \; W_{p+q-2}). \]
    Hence, if the weight filtration is of the form 
    \[ W_{n-2} = 0 \subset W_{n-1} \subset W_n = V, \]
    that is, it has lenght $\leq 1$, then the $\RR$-mixed Hodge structure $(V \otimes \RR,W,F)$ is split.
\end{rema}
\subsection{Mixed Hodge complexes}
In the following, by a filtered complex we mean a complex in the category of filtered $\Bbbk$-modules where the filtrations are assumed to be exhaustive and Hausdorff. That is:
\[ \bigcup_{k} W_k A = A, \qquad \bigcap_k W_k A = 0, \qquad \bigcap_k W_k H^*(A) = 0,\]
where $H^*(A)$ has the usual induced filtration:
\[ W_k H^*(A) = \Img( H^*(W_k A) \to H^*(A)). \]
A filtered complex $(A,W)$ is said to be \textit{regular} if for every degree $n \in \ZZ$, there exists $k \in \ZZ$ such that 
\[ W_k A^n = 0. \]
A filtered complex $(A,W)$ is said to be \textit{biregular} if for every degree $n \in \ZZ$, there exist $p,q \in \ZZ$ such that
\[ W_p A = 0, \qquad W_q A = A. \]
A filtered complex $(A,W)$ is said to be \textit{$d$-strict} if 
\[ d(W_k A) = W_k A \cap \Img(d). \]
If $(A,W)$ is regular, this is equivalent to the associated spectral sequence degenerating at $E_1$. A morphism of filtered complexes $f :(A,W) \to (B,W)$ (or a filtered morphism) is said to be a \textit{filtered quasi-isomorphism} if for every $k \in \ZZ$, the induced graded
\[ Gr_k^W(f) : Gr^W_k A \to Gr^W_k B \]
is a quasi-isomorphism.
\begin{rema}
  In the literature, the notion of filtered quasi-isomorphism usually refers to a filtered morphism $f : (A,W) \to (B,W)$ such that for every $k \in \ZZ$,
  \[ W_k f : W_k A \to W_k B \]
  is a quasi-isomorphism. If $(A,W)$ and $(B,W)$ are regular, then the two notions are equivalent.
\end{rema}
\begin{defi}
  \label{mhc-defi}
  A $\Bbbk$-\textit{mixed Hodge complex of length $s$} is given by a filtered cochain complex $(A_\Bbbk,W)$ over $\Bbbk$, a bifiltered cochain complex $(A_\CC,W,F)$ over $\CC$, filtered complexes $(A_i,W)$ over $\CC$ for $1 \leq i \leq s-1$ and filtered quasi-isomorphisms $\varphi_{u}$ for each arrow $u : i \to j$ as in the following diagram:
  \[ \begin{tikzcd}[column sep = small]
    & (A_1,W) & & \cdots & \\
    (A_0,W) = (A_\Bbbk,W) \otimes \CC \arrow[ru,"\varphi_{01}"] & & (A_2,W) \arrow[lu,"\varphi_{21}"'] \arrow[ru,"\varphi_{23}"] & & \arrow[lu,"\varphi_{s s-1}"'] (A_s,W) = (A_\CC,W)  
  \end{tikzcd} \]
  In addition, the following axioms are satisfied:
  \begin{enumerate}
    \item The weight filtrations $W$ are increasing, exhaustive and regular. The Hodge filtration $F$
    is decreasing and biregular. The cohomology $H^*(A_\Bbbk)$ has finite type.
    \item \label{mhc-axiom2} The filtered complexes $(A_\CC,W)$, $(A_\CC,F)$, $(Gr_k^W(A_\CC),F)$ and $(Gr^p_F(A_\CC),W)$ for all $k,p \in \ZZ$ are $d$-strict.
    \item \label{mhc-axiom3} For all $n \in \NN$ and $p \in \ZZ$, the filtration induced by $F$ on $H^n(Gr_p^W A_\CC)$ and the isomorphisms $\varphi_{u}^*$ induce a pure Hodge structure of weight $p$ on $H^n(Gr_p^W A_\Bbbk)$.
  \end{enumerate}
\end{defi}
The morphisms $\varphi_u : (A_\Bbbk,W) \otimes \CC \to (A_\CC,W)$ are called the \textit{comparison morphisms}. A morphism of mixed Hodge complexes is a tuple of morphisms
\[ f_\Bbbk : (A_\Bbbk, W) \to (B_\Bbbk,W), \quad f_\CC : (A_\CC, W,F) \to (B_\CC,W,F), \]
\[ f_i : (A_i,W) \to (B_i,W), \quad \text{for all } 0 \leq i \leq s, \]
that commute with the comparison morphisms and such that $f_0 = f_\Bbbk \otimes \CC$ and $f_s = f_\CC$. The category $\MHC_\Bbbk$ of $\Bbbk$-mixed Hodge complexes is symmetric monoidal with tensor prodcut defined component-wise. To simplify notation, we will drop the length $s$ from the notation, when the results apply to any length.
\begin{rema}
    Given a mixed Hodge complex $A$, its cohomology 
    \[ (H^*(A_\Bbbk),H^*(A_\CC),\varphi^*) \] 
    with the induced filtrations, is again a mixed Hodge complex. The axioms \ref{mhc-axiom2} and \ref{mhc-axiom3} imply that the triple $(H^*(A_\Bbbk),W,(\phi^*)^{-1}F)$ is a graded $\Bbbk$-mixed Hodge structure.
\end{rema}
A morphism $f : A \to B$ of mixed Hodge complexes is said to be a \textit{quasi-isomorphism} if $f^* : H^*(A) \to H^*(B)$ is an isomorphism.
\vspace{1ex}
\par There is an inclusion $\mathrm{Ch}^*(\MHS) \hookrightarrow \MHC$ given by setting the comparison morphisms to be identities. This induces an equivalence of categories 
\[ D(\MHS) \xrightarrow{\sim} \Ho(\MHC) := \MHC[\mathrm{Qiso}^{-1}], \]
(see \cite{Beilinson}).
\begin{rema}
  \label{del-vs-bei}
  Definition \ref{mhc-defi} does not exactly coincide with Deligne's definition of a mixed Hodge complex (cf. \cite{DeHIII}). In Deligne's definition, axioms (\ref{mhc-axiom2}) and (\ref{mhc-axiom3}) are replaced with
  \begin{itemize}
    \item[(2')] For each $k \in \ZZ$, the filtered complex $(Gr_k^W(A_\CC),F)$ is $d$-strict,
    \item[(3')] For all $n \in \NN$ and $p \in \ZZ$, the filtration induced by $F$ on $H^{n}(Gr_p^W A_\CC)$ and the maps $\varphi_u^*$ induce a pure Hodge structure of weight $p+n$ on $H^n(Gr_p^W A_\Bbbk)$.
  \end{itemize}
  Instead, our definition coincides with Beilinson's notion of absolute Hodge complex \cite{Beilinson}. There is, however, a functor from one category to the other. Denote by $\MHC'$ the category of mixed Hodge complexes as defined by Deligne. Then, there exist the Decalage functor
  \[ \mathrm{Dec} : \mathsf{MHC}' \to \MHC, \]
  defined by changing the weight filtration of each $A_i$ in the following way
  \[ \mathrm{Dec}(W)_kA^n = W_{k-n} A^n \cap d^{-1}(W_{k-n-1}A^{n+1}). \]
  This functor becomes an equivalence of categories after localizing at quasi-isomorphisms (see section $4$ of \cite{CG2}).
\end{rema}

\section{Mixed Hodge diagrams}
\label{MHD-sec}
In this section, we review the notions of mixed Hodge diagrams, their homotopy models and, in particular, the homotopy transfer theorem in the setting of mixed Hodge structures, introduced in \cite{MHF}.
\begin{defi}
  The category $\mathsf{MHD}$ of \textit{mixed Hodge diagrams} is the category of commutative monoids in $\MHC$.
\end{defi}
\begin{rema}
  A mixed Hodge diagram $A$ is equivalently defined as a mixed Hodge complex with the extra structure of a filtered cdga on $(A_\Bbbk,W)$, a bifiltered cdga on $(A_\CC,W,F)$  and filtered cdga's on $(A_i,W)$ for all $0 \leq i \leq s$, such that the comparison morphism $\phi_u : (A_i,W) \to (A_j,W)$  is a filtered quasi-isomorphism of algebras for every arrow $u : i \to j$.
\end{rema}
We say that a morphism of mixed Hodge diagrams is a \textit{quasi-isomorphism} if its underlying morphism of mixed Hodge complexes is a quasi-isomorphism. Denote by $\Ho(\MHD)$ the category of mixed Hodge diagrams localized at quasi-isomorphisms. The homotopy theory of mixed Hodge diagrams is studied in \cite{Cir-cofmodel}, where Cirici constructs cofibrant and minimal models of mixed Hodge diagrams. There is also an $\infty$-algebra variant of mixed Hodge diagrams which captures their homotopy theory.
\begin{defi}
  \label{Pinftymodel-defi}
  A \textit{$C_\infty$-mixed Hodge diagram} is a mixed Hodge complex 
  \[A = ((A_\Bbbk,W),(A_\CC,W,F),\phi) \] 
  together with
  \begin{itemize}
    \item a $C_\infty$-algebra structure $m^\Bbbk$ on $A_\Bbbk$ such that the maps $(m^\Bbbk)_n$ preserve $W$ for all $n \geq 2$,
    \item a $C_\infty$-algebra structure $m^\CC$ on $A_\CC$ such that the maps $(m^\CC)_n$ preserve both $W$ and $F$ for all $n \geq 2$,
    \item for every $0 \leq i \leq s$, a $C_\infty$-algebra structure $m^i$ on $A_i$ such that the maps $(m^i)_n$ preserve $W$ for all $n \geq 2$,
    \item for every $u : i \to j$, an $\infty$-morphism 
    \[ \hat{\phi}^u: (A_i,m^i) \to (A_j,m^j) \]
    such that $\hat{\phi}^u_n$ preserves $W$ for all $n \geq 1$ and $\hat{\phi}^u_1 = \phi_u$.  
  \end{itemize}
\end{defi}
\begin{defi}
  An \textit{$\infty$-morphism} of $C_\infty$-mixed Hodge diagrams $f : A \to B$ is a collection of $\infty$-morphisms
\[ f^\Bbbk : (A_\Bbbk,m^\Bbbk) \to (B_\Bbbk,m^\Bbbk), \quad f^\CC : (A_\CC,m^\CC) \to (B_\CC,m^\CC) \]
\[ f^i : (A_i,m^i) \to (B_i,m^i), \quad \text{for all } 0 \leq i \leq s, \]
such that $f^\Bbbk_n$ and $f^i_n$ preserve $W$ and $f^\CC_n$ preserves $W$ and $F$ for all $n \geq 1$. Moreover, for every $u : i \to j$, they make the following square of $\infty$-morphisms commute:
\[ \begin{tikzcd}
      A_i \arrow[r,"f^i"] \arrow[d, "\hat{\phi}^u_A"] & B_i \arrow[d, "\hat{\phi}^u_B"] \\
      A_j \arrow[r, "f^j"] & B_j.
    \end{tikzcd} 
\]
\end{defi}
We say that an $\infty$-morphism $f = (f^\Bbbk,f^i,f^\CC)$ is an \textit{$\infty$-quasi-isomorphism} if the underlying morphism of mixed Hodge complexes $f_1 = ((f^\Bbbk)_1,(f^i)_1, (f^\CC)_1)$ is a quasi-isomorphism.
Denote by $\mathsf{MHD}_\infty$ the category of $C_\infty$-mixed Hodge diagrams together with $\infty$-morphisms. Note that there is an inclusion $\iota : \mathsf{MHD} \hookrightarrow \mathsf{MHD}_\infty$.
\vspace{1ex}
\par Denote by $\mathrm{co}Lie\textrm{-}\mathsf{MHD}$ the category of coLie coalgebras in $\MHC$. As in the case of cdga's, there is a bar-cobar adjunction for mixed Hodge diagrams:
\[ \Omega : \mathsf{coLie}\textrm{-}\mathsf{MHD} \leftrightharpoons \mathsf{MHD} : \mathrm{B} \]
\begin{rema}
  This adjunction is not well-defined for mixed Hodge complexes as we defined them above, but only for a larger version of mixed Hodge complexes (denoted by $\MHC^{\NN\textrm{-fil}}$ in \cite{MHF}), where the weight and Hodge filtrations are not necessarily regular but satisfy a more relaxed regularity condition. This is because the bar and cobar functors naturally give non-regular filtrations. In the rest of this work we implicitly assume this larger category when we write $\MHC$, $\MHD$, $\mathsf{coLie}\textrm{-}\mathsf{MHD}$ or $\MHD_\infty$.
\end{rema}
Moreover, there is an extension of the bar functor to $\mathsf{MHD}_\infty$:
\[ \tilde{\mathrm{B}} : \mathsf{MHD}_\infty \to \mathsf{coLie}\textrm{-}\mathsf{MHD}\]
and an adjunction 
\[ \Omega \tilde{\mathrm{B}} : \mathsf{MHD}_\infty \leftrightharpoons \mathsf{MHD} : \iota. \]
The functors $\Omega \tilde{\mathrm{B}}$ and $i$ preserve quasi-isomorphisms and thus induce functors between the categories localized at quasi-isomorphisms, denoted $\Ho(\mathsf{MHD})$ and $\Ho(\mathsf{MHD}_\infty)$. Moreover, the unit
\[ A \to \Omega \tilde{\mathrm{B}}(A) \]
is an $\infty$-quasi-isomorphism for all $A \in C_\infty\textrm{-}\mathsf{MHD}$. As a consequence, we have

\begin{theo}[\cite{MHF} Theorem $3.31$]
  \label{equivCinfMHD-theo}
  The adjunction $\Omega \tilde{\mathrm{B}} : \mathsf{MHD}_\infty \leftrightharpoons \mathsf{MHD} : \iota$ induces an equivalence of categories 
  \[ \Ho(\mathsf{MHD}_\infty) \simeq \Ho(\mathsf{MHD}). \]
\end{theo}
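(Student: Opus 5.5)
The plan is to derive the equivalence from the adjunction $\Omega\tilde{\mathrm{B}} \dashv \iota$ by showing that both functors preserve quasi-isomorphisms and that the unit and counit are quasi-isomorphisms. Localization then turns these natural transformations into natural isomorphisms.

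First I will check that both functors descend to the homotopy categories.
\begin{enumerate}
\item The inclusion $\iota$ sends quasi-isomorphisms of mixed Hodge diagrams to strict morphisms whose linear part is a quasi-isomorphism, that is, to $\infty$-quasi-isomorphisms. It therefore induces $\Ho(\iota) : \Ho(\MHD) \to \Ho(\MHD_\infty)$.
\item For $\Omega\tilde{\mathrm{B}}$, the paper already states that the unit $\eta_A : A \to \Omega\tilde{\mathrm{B}}(A)$ is an $\infty$-quasi-isomorphism for every $A$.
\item Given an $\infty$-quasi-isomorphism $f : A \to B$, naturality gives $\eta_B \circ f = \Omega\tilde{\mathrm{B}}(f) \circ \eta_A$.
\item Taking linear parts and cohomology, three of the four maps in this square are isomorphisms, so $H^*(\Omega\tilde{\mathrm{B}}(f)_1)$ is one too. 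Hence $\Omega\tilde{\mathrm{B}}(f)$ is a quasi-isomorphism of mixed Hodge diagrams.
\item Here I use that $\Omega\tilde{\mathrm{B}}(f)$ is strict, so its linear part is the map itself, and that linear parts of composites of $\infty$-morphisms compose.
\end{enumerate}

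Next I will treat the counit $\varepsilon_B : \Omega\tilde{\mathrm{B}}\iota(B) \to B$ for $B \in \MHD$.
\begin{enumerate}
\item The triangle identity $\iota(\varepsilon_B) \circ \eta_{\iota B} = \mathrm{id}_{\iota B}$ holds.
\item Taking linear parts and cohomology, $H^*(\varepsilon_B)$ is a left inverse of the isomorphism $H^*((\eta_{\iota B})_1)$, hence itself an isomorphism.
\item So $\varepsilon_B$ is a quasi-isomorphism.
\end{enumerate}

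Finally, the unit and counit become natural isomorphisms in the localized categories, since $\eta_A$ is an $\infty$-quasi-isomorphism. Naturality and the triangle identities pass to the localizations because both functors preserve the localizing classes. Thus $\Ho(\Omega\tilde{\mathrm{B}})$ and $\Ho(\iota)$ are mutually quasi-inverse, which gives $\Ho(\MHD_\infty) \simeq \Ho(\MHD)$.

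The main obstacle is the input quoted before the theorem: that $\eta_A$ is an $\infty$-quasi-isomorphism compatibly with the filtrations and comparison maps. This means showing that the classical bar-cobar resolution is a quasi-isomorphism on each component, that the resulting filtrations still give a mixed Hodge complex in the relaxed ($\NN$-filtered) sense, and that strictness is preserved. For this I would first try a filtered spectral sequence argument on $\mathrm{Gr}^W$ and $\mathrm{Gr}_F$, reducing to the non-filtered statement for $C_\infty$-algebras. The hypothesis that the filtrations are exhaustive and appropriately regular is what makes these spectral sequences converge. Since the paper treats that fact as given, the formal argument above suffices.
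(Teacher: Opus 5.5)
Your argument is correct and follows the same route the paper indicates: both functors preserve quasi-isomorphisms and the unit $A \to \Omega\tilde{\mathrm{B}}(A)$ is an $\infty$-quasi-isomorphism, so the adjunction descends to an equivalence of the localized categories. You also fill in steps the paper (citing \cite{MHF}) leaves implicit, and both are valid: you derive that $\Omega\tilde{\mathrm{B}}$ preserves quasi-isomorphisms from naturality of the unit, and that the counit is a quasi-isomorphism from the triangle identity $\iota(\varepsilon_B)\circ\eta_{\iota B}=\mathrm{id}_{\iota B}$.
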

\begin{rema}
  \label{mhdequivlength-rema}
  Note that this is an equivalence between $C_\infty$-mixed Hodge diagrams and mixed Hodge diagrams of a fixed length $s$. However, for the purposes of studying formality, the length is irrelevant. Indeed, as a consequence of Proposition \ref{Pmhalgmodels-prop} below, any mixed Hodge diagram of length $s$ is quasi-isomorphic to one of length $1$. Moreover, there is a bijection between isomorphism classes in $\Ho(\MHD_s)$ and $\Ho(\MHD_1)$ (see Proposition $3.44$ of \cite{MHF}).
\end{rema}
There is also a version of the homotopy transfer theorem for mixed Hodge diagrams:
\begin{theo}[\cite{MHF} Theorem $3.39$]
  \label{htt-theo}
  Given $A \in \mathsf{MHD}$, there is a $C_\infty$-mixed Hodge diagram isomorphic to $A$ in $\Ho(\mathsf{MHD}_\infty)$ whose underlying mixed Hodge complex is $H^*(A)$.
\end{theo}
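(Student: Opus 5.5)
The plan is to follow the classical homotopy transfer argument and run it entirely inside the category of mixed Hodge complexes, so that every map produced is compatible with $W$, with $F$ and with the comparison morphisms. By Remark \ref{mhdequivlength-rema} we may first replace $A$ by a quasi-isomorphic mixed Hodge diagram of length $1$, so that there is a single comparison morphism $\varphi : (A_\Bbbk,W)\otimes\CC \to (A_\CC,W)$. The main goal is then to construct deformation retract data
\[ i : H^*(A) \rightleftarrows A : p, \qquad h : A \to A[-1], \]
in which $i$ and $p$ are morphisms of mixed Hodge complexes (defined componentwise on $A_\Bbbk$, $A_0$, $A_1=A_\CC$), the homotopies $h$ preserve the filtrations componentwise, and the data are compatible with the comparison maps. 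Here $H^*(A)$ is regarded as a mixed Hodge complex with identity comparison maps, i.e.\ an object of $\mathrm{Ch}^*(\MHS)$.

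Once this retract exists, we apply the tree formulas of the usual homotopy transfer theorem in each component. This gives $C_\infty$-structures $m^\Bbbk$, $m^\CC$ and $m^i$ on the cohomology, together with $\infty$-quasi-isomorphisms $i_\infty$ from the transferred structures to $A$. The components of $i_\infty$ are built from $i$, $h$ and the products, and these all preserve the filtrations, so every $m_n$ and every component of $i_\infty$ preserves $W$ (and also $F$ on the $\CC$-side). The comparison $\infty$-morphism $\hat\varphi$ on $H^*(A)$ is obtained as the composite $p_\infty\circ\varphi\circ i_\infty$. Alternatively, it comes from transferring the $\infty$-morphism $\varphi$ along the two retracts, which uses the standard formula for transferring morphisms. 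Its linear part is $\varphi^*$, which after the identification is the identity. We then have a zigzag of $\infty$-quasi-isomorphisms in $\MHD_\infty$. Combining this with Theorem \ref{equivCinfMHD-theo}, or working directly in $\Ho(\MHD_\infty)$, gives the desired isomorphism.

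The main obstacle is constructing a filtration-compatible retract at all. Over a field, a plain retract of a complex onto its cohomology always exists. A bifiltered one does not exist in general, and it has to be extracted from the $d$-strictness axioms (\ref{mhc-axiom2}) and the purity axiom (\ref{mhc-axiom3}). My first attempt would use the Deligne splitting of Remark \ref{delsplit-rema} on $H^*(A_\CC)$, together with strictness of $d$ with respect to $W$, $F$ and the graded pieces. The aim is to choose complements to cycles and boundaries that are compatible with both filtrations, so that $A_\CC \cong H \oplus B \oplus dB$ as bifiltered spaces. On $A_\Bbbk$ we would only need compatibility with $W$, which again follows from $d$-strictness of $W$. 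The two retracts will generally not commute strictly with $\varphi$. This is why the comparison map must be allowed to become an $\infty$-morphism: the discrepancy is absorbed into the higher components $\hat\varphi_n$, which preserve $W$ because $\varphi$, $h$ and $p$ do.

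A second, milder point is that the transferred structures must stay commutative, i.e.\ $C_\infty$ rather than just $A_\infty$. This holds because the input algebras are commutative and the transfer is built from the commutative operad's Koszul dual. Concretely, the Lie-coalgebra version of the bar construction is compatible with the tree formulas, so the $m_n$ vanish on shuffles. Regularity of the filtrations on the bar side is handled by working in the enlarged category $\MHC^{\NN\textrm{-fil}}$ mentioned in the remark above.
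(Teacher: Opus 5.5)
The paper gives no proof of this statement (it is quoted from \cite{MHF}), so I judge your outline on its own terms. The architecture is viable: a retract at each node, transfer of the products, and $\hat\varphi=p_\infty\circ\varphi\circ i_\infty$. The $C_\infty$ and filtration bookkeeping in the transfer step is also fine. However, the two steps that carry the weight are asserted rather than supplied.

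\emph{The zigzag.} With $\hat\varphi=p^\CC_\infty\circ\varphi\circ i^\Bbbk_\infty$, in general neither $(i^\Bbbk_\infty,i^\CC_\infty)$ nor $(p^\Bbbk_\infty,p^\CC_\infty)$ is a morphism in $\MHD_\infty$. That would require $i^\CC_\infty p^\CC_\infty\varphi\, i^\Bbbk_\infty=\varphi\, i^\Bbbk_\infty$, respectively $p^\CC_\infty\varphi=p^\CC_\infty\varphi\, i^\Bbbk_\infty p^\Bbbk_\infty$, and these hold only up to homotopy. Already in arity one, the first forces $\varphi\circ i^\Bbbk_1=i^\CC_1$ to preserve $F$. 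This means choosing $\Bbbk$-rational cohomology representatives compatible with the Hodge filtration, which is impossible e.g.\ for the two-term complex of mixed Hodge structures $\Bbbk(1)\hookrightarrow E$ with $E$ a non-split extension of $\Bbbk(0)$ by $\Bbbk(1)$. Morphisms in $\MHD_\infty$ need strictly commuting squares, so ``we then have a zigzag'' is a gap. You can close it with a hybrid object. Let $C$ have $C_\Bbbk=(H^*(A_\Bbbk),m^\Bbbk)$, $C_\CC=A_\CC$ and comparison $\varphi\circ i^\Bbbk_\infty$. Then $(i^\Bbbk_\infty,\mathrm{id})\colon C\to A$ and $(\mathrm{id},p^\CC_\infty)\colon C\to(H^*(A),m,\hat\varphi)$ commute strictly and are $\infty$-quasi-isomorphisms. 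The same device works for any length: put cohomology at the sources of the zigzag and the original complexes at its targets. So you can also drop the reduction to length $1$, which as written is circular: Remark~\ref{mhdequivlength-rema} is deduced from Proposition~\ref{Pmhalgmodels-prop}, whose proof uses the theorem you are proving.

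\emph{The bifiltered retract.} You rightly call this the main obstacle but only indicate a strategy. The tool you name, the Deligne splitting of $H^*(A_\CC)$, lives on cohomology and says nothing about how $W$ and $F$ meet cycles and boundaries at chain level. Writing $W_kF^p=W_k\cap F^p$, what you need is:
(a) $d(W_kF^pA_\CC)=W_kF^pA_\CC\cap\Img(d)$;
(b) $\Img\big(H(W_kF^pA_\CC)\to H(A_\CC)\big)=W_kH\cap F^pH$.
Both follow by induction on the weight from $W$-strictness of $A_\CC$ and $F$-strictness of every $Gr^W_mA_\CC$. For (b), suppose cycles $y_W\in W_k$ and $y_F\in F^p$ are cohomologous with $y_W-y_F\in W_m$, $m>k$. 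Then $W$-strictness puts $y_W-y_F$ in $d(W_mA_\CC)$, and its class in $Gr^W_m$ is $-\overline{y_F}\in F^pGr^W_m$. So $F$-strictness on $Gr^W_m$ gives $c\in W_mF^p$ with $y_W-y_F-dc\in W_{m-1}$, and you iterate with $y_F+dc$. Given (a) and (b), split the induced bifiltrations on $\Img(d)$ and on $H$, and lift adapted bases into the corresponding $W_kF^pA_\CC$ (along $d$, respectively to cycles). This gives a bifiltered decomposition of each $A^n_\CC$ into $i(H^n)$, $\Img(d)$, and a complement mapped by $d$ bifilteredly and isomorphically onto $\Img(d)\subset A^{n+1}_\CC$, hence bifiltered $i,p,h$. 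Alternatively, first pass to a cdga in mixed Hodge structures via \cite{CG1}. There $d$ preserves the Deligne splitting $I^{p,q}$ of each $A^n_\CC$, and you can retract each $I^{p,q}$ separately. Finally, $W$-strictness of $A_\Bbbk$ and of the intermediate $A_i$ is not an axiom. You have to deduce it from $E_1$-degeneration, transported along the filtered quasi-isomorphisms.
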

As a consequence, we show that mixed Hodge diagrams admit models which are cdga's in mixed Hodge structures. This result was obtained in \cite{CG1}, where Cirici and Guillén adapted Sullivan's theory of minimal models of dga's to the setting of mixed Hodge diagrams. In previous work, Morgan \cite{morgan} also computed models of mixed Hodge diagrams but which are not themselves mixed Hodge diagrams.

\begin{prop}
  \label{Pmhalgmodels-prop}
  Any mixed Hodge diagram is quasi-isomorphic to a cdga in mixed Hodge structures.
\end{prop}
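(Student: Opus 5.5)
The plan is to combine the homotopy transfer theorem (Theorem \ref{htt-theo}) with the equivalence $\Ho(\MHD_\infty)\simeq \Ho(\MHD)$ (Theorem \ref{equivCinfMHD-theo}). Let $A$ be a mixed Hodge diagram. Theorem \ref{htt-theo} provides a $C_\infty$-mixed Hodge diagram $H$, with underlying mixed Hodge complex $H^*(A)$, and an isomorphism $A\cong H$ in $\Ho(\MHD_\infty)$. The underlying mixed Hodge complex of $H$ has zero differential. Its components $H^*(A_\Bbbk)$ and $H^*(A_i)$, $H^*(A_\CC)$ are related by the isomorphisms $\phi_u^*$, and these make $H^*(A_\Bbbk)$ a graded mixed Hodge structure. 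However, the $C_\infty$-structures $m^\Bbbk, m^i, m^\CC$ and the $\infty$-morphisms $\hat\phi^u$ need not be compatible with the identifications given by the $\phi_u^*$. So $H$ is not yet a $C_\infty$-algebra in $\MHS$.

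The first step is to rigidify $H$. I would transport every $C_\infty$-structure to $H^*(A_\Bbbk)\otimes\CC$ along the strict filtered isomorphisms $\phi_u^*$, composed along the zig-zag. Each $\hat\phi^u$ is an $\infty$-morphism whose linear part is an isomorphism, hence an $\infty$-isomorphism, and it preserves $W$. Composing with the inverse of the linear part gives an $\infty$-isomorphism whose linear part is the identity. Since all the maps are strict on $W$, this yields an isomorphic object in $\MHD_\infty$ of length $1$, or of any length with identity comparisons up to higher terms.

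The second step, which I expect to be the main obstacle, is to produce a single $C_\infty$-structure on $V:=H^*(A)$ in the category of mixed Hodge structures. Here $m_n^\Bbbk$ must preserve $W$, and $m_n^\Bbbk\otimes\CC$ must agree with $m_n^\CC$, which preserves $F$. A $C_\infty$-structure on $V$ in $\MHS$ would then be simultaneously a mixed Hodge diagram structure with identity comparisons. I see two routes.

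\begin{itemize}
\item The direct route is to transfer the comparison $\infty$-isomorphism $\hat\phi$ (now with identity linear part) so that $m^\Bbbk\otimes\CC$ and $m^\CC$ coincide. In general this fails, since the higher $\hat\phi_n$ encode exactly the obstructions $\theta_k$.
\item The route I would actually take is to avoid this by rectifying. Apply $\Omega\tilde{\mathrm B}$ to $H$: by Theorem \ref{equivCinfMHD-theo} the result is a mixed Hodge diagram isomorphic to $A$ in $\Ho(\MHD)$. For a mixed Hodge complex built from a mixed Hodge structure with near-identity comparisons, I would check that the cobar--bar construction can be taken with all components equal to $\Omega\mathrm{B}$ of the underlying graded mixed Hodge structure, twisted by the higher comparison terms.
\end{itemize}

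To make the second route rigorous, I would instead invoke Cirici--Guillén's Sullivan-type minimal model construction \cite{CG1}, which is exactly the claim. Concretely, one builds a Sullivan algebra $\Lambda V$ inductively, where each generator space $V$ is a mixed Hodge structure (a kernel or cokernel of morphisms of mixed Hodge structures in $\MHS$). The differential is a morphism of mixed Hodge structures, and one constructs a quasi-isomorphism to $A$. At each stage, extending the map requires lifting cohomology classes compatibly with both filtrations. This is possible because $\mathrm{Ext}^{\ge 2}_{\MHS}$ vanishes (Proposition \ref{mh-ext}), together with strictness: morphisms of mixed Hodge structures are strict, so kernels and cokernels remain in $\MHS$.

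Hence the obstacle step reduces to solving each inductive extension problem in $D(\MHS)\simeq\Ho(\MHC)$. There, the map from the new generators to $A$ exists because $\Hom_{D(\MHS)}(V,A)$ surjects onto the required cohomology data; the $\mathrm{Ext}^2$-vanishing kills the relevant obstruction.
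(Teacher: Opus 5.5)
Your opening steps agree with the paper's proof. You use homotopy transfer, and you observe that each $\hat\phi^u$, having invertible linear part between complexes with zero differential, is an $\infty$-isomorphism. Your second route is exactly the paper's argument, but you leave its decisive step as something you ``would check'' and then abandon it. The missing idea is that $\Omega\tilde{\mathrm{B}}$ turns $\infty$-isomorphisms into strict isomorphisms of cdgas. Composing (and inverting where needed) the maps $\Omega\tilde{\mathrm{B}}(\hat\phi^u)$ therefore gives a single cdga isomorphism $\Omega\tilde{\mathrm{B}}(\hat\phi)\colon \Omega\tilde{\mathrm{B}}(H^*(A_\Bbbk))\otimes\CC\to\Omega\tilde{\mathrm{B}}(H^*(A_\CC))$. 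One then keeps the single cdga $\Omega\tilde{\mathrm{B}}(H^*(A_\Bbbk),m^\Bbbk)$ and equips it with the freely extended $W$ and the Hodge filtration $\Omega\tilde{\mathrm{B}}(\hat\phi)^{-1}(F)$. Viewed as a diagram with identity comparisons, this cdga in mixed Hodge structures is strictly isomorphic to $\Omega\tilde{\mathrm{B}}(H)$. That diagram is quasi-isomorphic to $A$ by the unit $H\to\Omega\tilde{\mathrm{B}}(H)$ and Theorem \ref{equivCinfMHD-theo}. So it is the Hodge filtration that gets twisted, not the components. The components $\Omega\tilde{\mathrm{B}}(H^*(A_\Bbbk),m^\Bbbk)$ and $\Omega\tilde{\mathrm{B}}(H^*(A_\CC),m^\CC)$ in general have different differentials and are identified only through $\Omega\tilde{\mathrm{B}}(\hat\phi)$; they are not ``all equal to $\Omega\mathrm{B}$ of the underlying mixed Hodge structure''. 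Your correct remark that the direct route fails is precisely why one passes to $\Omega\tilde{\mathrm{B}}$, where the comparison becomes strict.

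The fallback to \cite{CG1} does not fill this gap. Citing that result is legitimate within its range. However, a Sullivan algebra $\Lambda V$ with $V=V^{\ge 1}$ has $H^0=\Bbbk$, so your construction only reaches cohomologically connected diagrams, while the statement concerns arbitrary ones. More importantly, the justification you sketch does not work. Extending $\Lambda V_{\le n}\to A$ over new generators is a cochain-level, multiplicative problem. You must choose images in $A_\Bbbk$ and $A_\CC$ respecting $W$ and $F$, together with comparison homotopies compatible with products of the previously chosen ones. A morphism $V\to A$ in $D(\MHS)$ records only the linear data up to homotopy. The vanishing $\mathrm{Ext}^{\ge 2}_{\MHS}=0$ says nothing about multiplicative coherence. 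As the paper stresses around Theorem \ref{phi2-theo}, algebras in $D(\MHS)$ do not see the higher coherences present in $\Ho(\MHD)$. So the inductive step, as you justify it, would not produce a morphism of mixed Hodge diagrams.
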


\begin{proof}
  By Theorem \ref{htt-theo}, there exists a $C_\infty$-mixed Hodge structure on $H^*(A)$ which is $\infty$-quasi-isomorphic to $A$. Let us denote this $C_\infty$-mixed Hodge diagram by $(H^*(A),m)$. Moreover, the unit 
  \[ (H^*(A),m) \to \Omega \tilde{\mathrm{B}}((H^*(A),m)) \]
  is an $\infty$-quasi-isomorphism. By Theorem \ref{equivCinfMHD-theo}, $A$ is thus quasi-isomorphic to 
  \[ \Omega \tilde{\mathrm{B}}((H^*(A),m)). \] 
  Denote by $\hat{\varphi}^u$ the comparison morphisms of $(H^*(A),m)$. Since $H^*(A)$ has trivial differential and $\hat{\varphi}^u$ are quasi-isomorphisms, it follows that they are $\infty$-isomorphisms. Therefore, the morphisms $\Omega \tilde{\mathrm{B}}(\hat{\varphi}^u)$ are isomorphisms of cdga's. Denote by 
  \[ \Omega \tilde{\mathrm{B}}(\hat{\varphi}) : \Omega \tilde{\mathrm{B}}(H^*(A_\Bbbk)) \otimes \CC \to  \Omega \tilde{\mathrm{B}}(H^*(A_\CC))  \]
  the isomorphism obtained by composing, and inverting when necessary, all the comparison morphisms. Denote also by $W$ and $F$ the filtrations on $\Omega \tilde{\mathrm{B}}(H^*(A))$ induced by the weight and Hodge filtrations of $H^*(A)$ by extending freely to the tensor algebra. Then, the cdga in mixed Hodge structures
  \[ (\Omega \tilde{\mathrm{B}}(H^*(A)),W, \Omega \tilde{\mathrm{B}}(\hat{\varphi})^{-1}(F)) \] 
  is quasi-isomorphic to $A$.
\end{proof}
\begin{rema}
  \label{Mhalgmodel-rema}
  Note that the model of $A \in \MHD$ in cdga in mixed Hodge structures constructed in this proof has underlying cdga $\Omega \tilde{\mathrm{B}}(H^*(A_\Bbbk))$. Its weight filtration is just the one of $H^*(A)$ freely extended. However, its Hodge filtration is the extension of the Hodge filtration of $H^*(A)$ but twisted by an isomorphism of cdga's, the comparison morphism $\Omega \tilde{\mathrm{B}}(\hat{\varphi})$.
\end{rema}

\section{Mixed Hodge formality}
In this section we define mixed Hodge formality and review a theory of obstructions to mixed Hodge formality constructed in \cite{MHF}. 
\subsection{Formal mixed Hodge diagrams}
Given any category $\Cc$ with a class of weak equivalences and a cohomology functor $H^* : \Cc \to \Cc$, it is natural to ask when an object $A \in \Cc$ is \textit{formal}. That is, when there exists a zig-zag of weak equivalences between $A$ and $H^*(A)$:
\[ A \xleftarrow{\sim} \cdots \xrightarrow{\sim} H^*(A). \]
A classical example is that of cdgas over a ring $\Bbbk$ with quasi-isomorphisms as weak equivalences. When $\Bbbk$ is a field, every chain complex is formal, whereas there are cdga's which are not formal. Formality in this setting is thus really a question about the multiplicative structure. The same question can be posed for the category of mixed Hodge diagrams with quasi-isomorphisms. 
\begin{defi}
  A mixed Hodge diagram $A$ is said to be \textit{formal} if there exists a zig-zag of quasi-isomorphisms of mixed Hodge diagrams:
  \[ A \xleftarrow{\sim} \cdots \xrightarrow{\sim} H^*(A). \]
\end{defi}
Forgetting the product structure yields an analogous notion of formality for mixed Hodge complexes. As shown in Theorem $4.8$ of \cite{CG2}, every mixed Hodge complex is formal. But not every mixed Hodge diagram is formal (see, for instance, Example \ref{nontrivDBprod-exam} below). Hence, just as in the cdga case, formality of mixed Hodge diagrams is intimately tied to the multiplicative structure. Furthermore, note that if a mixed Hodge diagram $A$ is formal, then $(A_\Bbbk,W)$ and $(A_\CC,W,F)$ are formal filtered and bifiltered cdgas, respectively. The converse, however, does not hold (see, for instance, Example $4.22$ of \cite{MHF}). Formality of mixed Hodge diagrams is thus not only a question about the multiplicative structure but how it interacts with both the $\Bbbk$-structure and the $\CC$-structure together with the Hodge filtration.

\subsection{Mixed Hodge Harrison cohomologies}
The obstructions to formality live in a mixed Hodge version of Harrison cohomology, which we now define. 

Let $H$ be a commutative algebra in $\mathrm{Mod}_\Bbbk$. The \textit{Harrison cohomology} of $H$ is the homology of the complex
\[ C^n(H) = \Hom_\Bbbk( H^{\otimes n}/\mathrm{Sh}, H), \]
where $\mathrm{Sh}$ denotes the sum of the $(p,q)$-shuffle products for all $p+q = n$ (see \cite{harrisoncoho}). Its differential is 
\begin{align*}
  \delta(f)( x_1 \otimes \cdots \otimes x_{n+1} ) = \,& x_1 \cdot f(x_2 \otimes \cdots \otimes x_n) + \sum_{i = 1}^n (-1)^i f(x_1 \otimes \cdots \otimes x_i \cdot x_{i+1} \otimes \cdots \otimes x_n) \\
  &+ (-1)^{n+1} (x_1 \otimes \cdots \otimes x_{n-1}).
\end{align*} 
Suppose now that $H$ is a commutative algebra in $\MHS$. The filtrations of $H$ induce mixed Hodge structures on $C^*(H)$ according to equation (\ref{homfil-eq}). Note that $\delta$ is a map of mixed Hodge structures so it induces a map
\begin{equation}
  \label{Harrextdif-eq}
  \mathrm{Ext}_{\mathsf{MHS}}^1(H^{\otimes n} / \mathrm{Sh},H) \xrightarrow{\delta} \mathrm{Ext}_{\mathsf{MHS}}^1(H^{\otimes n+1} / \mathrm{Sh},H). 
\end{equation} 
Denote by $C_{\mathrm{Ext}}^*(H)$ the resulting complex.
\begin{defi}
  \label{hodgeharr-coho-defi}
  Let $H$ be a commutative algebra in $\MHS$.
  \begin{enumerate}
    \item The \textit{$\mathrm{Ext}$-Harrison cohomology} of $H$, denoted by $\mathrm{Harr}_{\mathrm{Ext}}^*(H)$, is the homology of $C_{\mathrm{Ext}}^*(H)$ and
    \item The \textit{Deligne-Beilinson-Harrison cohomology} of $H$, denoted by $\mathrm{Harr}_{\mathrm{DB}}^*(H)$, is the homology of the cone
    \[ C_{\mathrm{DB}}^*(H) = \mathrm{Cone}\big(W_0 C^*(H) \oplus W_0 F^0 C^*(H) \otimes \CC \xrightarrow{\iota_\Bbbk - \iota_F} W_0 C^*(H) \otimes \CC \big), \]
    where $\iota_\Bbbk : W_0 C^*(H) \to W_0 C^*(H) \otimes \CC$ and $\iota_F : W_0 F^0 C^*(H) \otimes \CC \to W_0 C^*(H) \otimes \CC$ are the inclusions.
  \end{enumerate}
\end{defi}

\begin{rema}
  If $H$ is graded, then the grading induces an extra grading on $\mathrm{Harr}^{**}(H)$. And Likewise for $\mathrm{Harr}_{\mathrm{Ext}}^{**}(H)$ and $\mathrm{Harr}_{\mathrm{DB}}^{**}(H)$. Let us denote the grading induced by $H$ the homological grading and the Harrison cohomology grading by arity. So an element in $\mathrm{Harr}^{k,n}(H)$ is represented by a morphism $f : H^{\otimes k} \to H$ of homological degree $n$.
\end{rema}

\begin{rema}
  The notion of Deligne-Beilinson-Harrison cohomology is precisely the Deligne Beilinson cohomology (introduced by \cite{Beilinson} and defined in section \ref{DBcoho-sec}) of the complex of mixed Hodge structures $C^*(H)$.
\end{rema}

\subsection{Obstructions to mixed Hodge formality}
We now recall a theory of obstructions to mixed Hodge formality.
\begin{rema}
  In the theorem below, by a sequence of successively defined classes, we mean a sequence of classes such that each class is defined if all previous classes are defined and trivial.
\end{rema}
\begin{theo}[\cite{MHF} Theorem $4.17$]
  \label{obst-theo}
  Let $A$ be a mixed Hodge diagram. There is a sequence of successively defined obstruction classes 
  \[ \theta_k \in \mathrm{Harr}_{DB}^{k,2-k}(H^*(A)) \qquad \text{for } k \geq 3, \]
  such that if all obstructions are trivial then $A$ is formal.
\end{theo}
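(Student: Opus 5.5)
We prove Theorem \ref{obst-theo} by the standard inductive obstruction argument for $C_\infty$-structures, transported to mixed Hodge diagrams through Theorems \ref{equivCinfMHD-theo} and \ref{htt-theo}. First, Theorem \ref{htt-theo} replaces $A$ by a $C_\infty$-mixed Hodge diagram whose underlying mixed Hodge complex is $H=H^*(A)$. It carries $C_\infty$-structures $m^\Bbbk$ on $H_\Bbbk$ and $m^\CC$ on $H_\CC$, with $m_1=0$ and $m_2$ the cup product. It also carries $\infty$-morphisms $\hat\varphi^u$ with linear part $\varphi_u$. Up to $\infty$-isomorphism we may take all comparison maps to be one map $\hat\varphi: (H_\Bbbk\otimes\CC,m^\Bbbk)\to(H_\CC,m^\CC)$, with $\hat\varphi_1=\varphi$ (Remark \ref{mhdequivlength-rema}). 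By Theorem \ref{equivCinfMHD-theo}, $A$ is formal iff this datum is $\infty$-isomorphic, via $\infty$-isomorphisms preserving $W$ (and $W,F$ on the complex side) and commuting with $\hat\varphi$, to the trivial datum $(H,m_2)$ with comparison $\varphi$. We transport the Hodge filtration back by $\varphi$, so that all three pieces live on one mixed Hodge structure.

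The induction runs as follows. Suppose we have arranged $m^\Bbbk_j=0$, $m^\CC_j=0$ and $\hat\varphi_j=0$ for $2\le j<k-1$, and also $m_{k-1}=0$ and $\hat\varphi_{k-2}=0$. At the first nontrivial stage the data consist of $m^\Bbbk_{k-1}$, $m^\CC_{k-1}$ and $\hat\varphi_{k-2}$. The $C_\infty$ relations at arity $k$, together with the condition that $\hat\varphi$ be an $\infty$-morphism, reduce modulo lower terms to three statements. First, $\delta m^\Bbbk_{k-1}=0$ and $\delta m^\CC_{k-1}=0$ in the Harrison complex, where $m_{k-1}$ vanishes on shuffles because the structure is $C_\infty$. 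Second, $\delta\hat\varphi_{k-2}=m^\CC_{k-1}-m^\Bbbk_{k-1}\otimes\CC$ up to sign. Third, the weight and Hodge conditions place $m^\Bbbk_{k-1}$ in $W_0C^{k-1,3-k}(H)$, $m^\CC_{k-1}$ in $W_0F^0C\otimes\CC$, and $\hat\varphi_{k-2}$ in $W_0C\otimes\CC$. This is exactly a cocycle $\theta_{k-1}=(m^\Bbbk_{k-1},m^\CC_{k-1},\hat\varphi_{k-2})$ in the cone $C^*_{\mathrm{DB}}(H)$ of Definition \ref{hodgeharr-coho-defi}, of bidegree $(k-1,\,3-k)=(k-1,\,2-(k-1))$. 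Reindexing gives $\theta_k\in \mathrm{Harr}^{k,2-k}_{\mathrm{DB}}$ for $k\ge3$.

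Conversely, suppose $\theta_k=D(g^\Bbbk,g^\CC,h)$ is a coboundary in the cone. Then $g^\Bbbk$ preserves $W$, $g^\CC$ preserves $W$ and $F$, and $h$ preserves $W$. We use $g^\Bbbk$ and $g^\CC$ as the arity-$(k-1)$ components of $\infty$-isomorphisms $\mathrm{id}+g$. These are Harrison cochains, so they vanish on shuffles and give $C_\infty$-morphisms. We also modify $\hat\varphi_{k-1}$ by $h$. Conjugating then kills $m_k$ and $\hat\varphi_{k-1}$ at that arity. This makes the next class well defined, and it depends only on choices up to coboundaries.

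Finally, if every $\theta_k$ vanishes, the infinite composite of these $\infty$-isomorphisms converges. Each step only changes components of arity at least $k-1$, so in each arity the sequence stabilizes. The limit is an $\infty$-isomorphism from our datum to the formal one $(H,m_2,\varphi)$, and Theorem \ref{equivCinfMHD-theo} then gives that $A$ is formal. The main obstacle is the bookkeeping in the second step. We must check that the arity-$k$ components of the $C_\infty$ and $\infty$-morphism equations assemble, with correct signs, into exactly the cone differential $\iota_\Bbbk-\iota_F$. We must also check that $\infty$-isomorphisms whose components preserve the filtrations correspond exactly to the $W_0$ and $W_0F^0$ pieces. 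I would handle this by writing everything on the bar coalgebra and comparing coderivation brackets degree by degree.
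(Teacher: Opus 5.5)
The paper does not reprove this theorem; it quotes it from \cite{MHF}. What it records of the construction is Proposition~\ref{obsrep-prop}: $\theta_k=[(m^\Bbbk)_k,(m^\CC)_k,\hat\varphi_{k-1}]$ for a minimal $C_\infty$-model of length one. That agrees with your classes once your inductive hypothesis is repaired. It should read $m_j=0$ for $3\le j\le k-1$ and $\hat\varphi_j=0$ for $2\le j\le k-2$. As written, you set $m_2=0$ (it is the cup product), and you both assume $m_{k-1}=0$ and take $m_{k-1}$ as the first nontrivial datum.

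The genuine gap is in the killing step, in how you use the third component of a primitive $\theta_k=D(g^\Bbbk,g^\CC,h)$. The $g$'s have arity $k-1$ and degree $2-k$, so they are legitimate components of $\infty$-isotopies. But $h$ has arity $k-2$ and degree $2-k$, one less than an $\infty$-morphism component in that arity. It is a homotopy, not part of any morphism, so ``modify $\hat\varphi_{k-1}$ by $h$'' is not an operation available to you in $\MHD_\infty$. Since $\infty$-morphisms of $C_\infty$-mixed Hodge diagrams commute strictly with the comparison maps, conjugating by $(\mathrm{id}+g^\Bbbk,\mathrm{id}+g^\CC)$ changes $\hat\varphi_{k-1}$ only by $g^\CC-g^\Bbbk$. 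This leaves the residual $\pm\delta h$. The residual can be absorbed into the $g$'s when $h\in W_0C+W_0F^0C\otimes\CC$. In general, though, $h$ lies only in $W_0C\otimes\CC$, and its image in the quotient is an $\mathrm{Ext}^1_{\MHS}$-class (Proposition~\ref{mh-ext}) that no $\infty$-isomorphism can remove.

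This is not a matter of signs. Take $m^\Bbbk=m^\CC=m_2$ and let $\hat\varphi$ be the exponential of the coderivation $[b,\hat h]$ of the bar construction. Here $b$ encodes $m_2$ and $\hat h$ is induced by a $W$-preserving $h$ of arity one and degree $-1$ with $\delta[h]\neq 0$ in $C^2_{\mathrm{Ext}}(H)$; for instance $\Bbbk=\QQ$, $H$ the cohomology of an elliptic curve and $h\colon H^1\to H^0$ generic. Then $\hat\varphi_2=\pm\delta h$, and the representative $(0,0,\pm\delta h)$ of $\theta_3$ is the coboundary $D(0,0,\mp h)$. The diagram is formal, since its comparison map is homotopic to $\varphi$. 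Yet for an $\infty$-isomorphism $f$ to $(H,m_2,\varphi)$, the arity-two part of the commuting square gives $f_1\circ\hat\varphi_2=\pm(f^\Bbbk_2-f^\CC_2)\in W_0C+W_0F^0C\otimes\CC$. Here $f_1=f^\Bbbk_1=f^\CC_1$ is an automorphism of the mixed Hodge algebra $H$, so this forces $\delta[h]=0$, a contradiction. So your claim that formality is equivalent to being $\infty$-isomorphic to $(H,m_2,\varphi)$ is false, and the limit in your last step cannot in general be an $\infty$-isomorphism.

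Two things are missing.
\begin{enumerate}
\item A lemma that replacing the comparison $\infty$-morphism by one homotopic to it, through a $W$-preserving homotopy, does not change the class in $\Ho(\MHD_\infty)$. One route is a zig-zag through $(H_\Bbbk,H_\CC\otimes\Lambda(t,dt),\Phi)$, with $W$ and $F$ extended constantly in $t$. You would need to check the mixed Hodge complex axioms for this object, and build $\Phi$ as an honest $\infty$-morphism restricting to the two comparison maps at $t=0,1$.
\item An argument that the infinitely many such modifications assemble into a single homotopy. Each one changes $\hat\varphi$ only in arity $\ge k-1$, so this should converge arity by arity, and the final identification with $(H,m_2,\varphi)$ is then an isomorphism in $\Ho(\MHD_\infty)$.
\end{enumerate}
Encoding $(m^\Bbbk,m^\CC,\hat\varphi)$ as a Maurer--Cartan element of a complete arity-filtered dg Lie algebra, whose gauge action contains both $\infty$-isotopies and such homotopies, would handle both points at once. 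This is exactly where the third component of the cone $C^*_{\mathrm{DB}}(H)$ enters; the sign bookkeeping you flag is not the real obstacle.
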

We give a description of representatives of the obstructions to formality. Suppose that $A$ is a mixed Hodge diagram of length $1$ (see Remark \ref{mhdequivlength-rema}). Let $(H^*(A),m)$ be a $C_\infty$-mixed Hodge diagram quasi-isomorphic to $A$ (such a $C_\infty$-diagram exists by Theorem \ref{htt-theo}). Recall that $(H^*(A),m)$ is composed of two $\infty$-algebras $m^\Bbbk$ and $m_\CC$:
\[ m^\Bbbk_n : H^*(A_\Bbbk)^{\otimes n} \to H^*(A_\Bbbk), \qquad m^\CC_n : H^*(A_\CC)^{\otimes n} \to H^*(A_\CC) \]
and an $\infty$-morphism $\hat{\varphi}$ (the comparison morphism):
\[ \hat{\varphi}_n : (H^*(A_\Bbbk) \otimes \CC)^{\otimes n} \to H^*(A_\CC). \]
We can further suppose that $H^*(A_\Bbbk) \otimes \CC = H^*(A_\CC)$ and $\hat{\varphi}_1 = id$.
\begin{prop}
  \label{obsrep-prop}
  Let $k \geq 3$ be the first value for which the triple
  \[ (m^\Bbbk_k,m^\CC_k,\varphi_{k-1}) \]
  is non-zero.  Then, the obstructions $\theta_n$ are trivial for $n \leq k-1$, so $\theta_k$ is defined and it is given by
\[ \theta_k = [(m^\Bbbk)_k,(m^\CC)_k, \hat{\varphi}_{k-1}]. \]
\end{prop}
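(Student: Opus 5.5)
We will argue directly from the construction of the obstructions in \cite{MHF} (Theorem \ref{obst-theo}). First, let us fix notation. Since $(H^*(A),m)$ is a $C_\infty$-mixed Hodge diagram with $H^*(A_\Bbbk)\otimes\CC = H^*(A_\CC)$ and $\hat{\varphi}_1 = id$, the $\infty$-algebra axioms and the $\infty$-morphism axiom for $\hat\varphi$ can be written arity by arity. The differentials on $H^*(A)$ vanish, so $m_1 = 0$. The arity-$2$ maps $m^\Bbbk_2$ and $m^\CC_2$ are the cohomology products, and these agree under $\hat\varphi_1 = id$. Hence in the lowest arities we are looking at the strictly formal structure $(H^*(A), \mu, id)$.

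The first step is to show that the triple $(m^\Bbbk_k, m^\CC_k, \hat\varphi_{k-1})$ is a cocycle in the cone complex $C^{k,2-k}_{\mathrm{DB}}(H^*(A))$. Consider the $C_\infty$-relation in arity $k+1$. Every term of the form $m_i\circ m_j$ with $3\le i,j\le k-1$ vanishes by the minimality of $k$, and so do the terms involving $m_1$. What remains is exactly $\delta(m_k) = 0$, where $\delta$ is the Harrison differential. The $C_\infty$-condition (vanishing on shuffles) says that $m_k$ lies in $C^k(H)$. The same reasoning applies to $\hat\varphi$. In arity $k$, the $\infty$-morphism relation reduces, after the vanishing of $\hat\varphi_2,\dots,\hat\varphi_{k-2}$ and of the intermediate $m$'s, to $m^\CC_k - m^\Bbbk_k = \pm\delta(\hat\varphi_{k-1})$. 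This is precisely the cocycle condition in the cone of $\iota_\Bbbk - \iota_F$. As for the filtrations, $m_k$ preserves $W$ (and $m^\CC_k$ preserves $F$), while $\hat\varphi_{k-1}$ preserves $W$. This places the triple in $W_0C\oplus W_0F^0C\otimes\CC\oplus W_0C\otimes\CC$ with the correct degrees.

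Second, we show that $\theta_n$ is trivial for $n<k$. We unwind the inductive construction of \cite{MHF}. At stage $n$, the obstruction is the class of the arity-$n$ component of the structure, after the lower components have been killed by an $\infty$-isomorphism (a gauge transformation) of $C_\infty$-mixed Hodge diagrams. Since all components of arity $3\le n<k$ are already zero, we may take the gauge transformations at each stage to be the identity. Each $\theta_n$ is then represented by the zero triple. Hence it is defined and trivial, and $\theta_k$ is defined.

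Third, we identify $\theta_k$ with the class of the triple, by checking that the representative prescribed in \cite{MHF} at stage $k$ is exactly $(m^\Bbbk_k, m^\CC_k, \hat\varphi_{k-1})$. The main obstacle is that the obstruction in \cite{MHF} is presumably defined via the bar construction $\tilde{\mathrm{B}}$ or via a model compared with $H^*(A)$. We must therefore check that the identity choices made in the second step are compatible with that definition. We must also check that the class does not depend on the chosen transferred structure. Independence will follow because a different choice differs by an $\infty$-isomorphism with linear part the identity. Its arity-$(k-1)$ components $(g^\Bbbk, g^\CC)$ change the triple by $\delta(g^\Bbbk)$, $\delta(g^\CC)$, and $g^\CC - g^\Bbbk$ respectively, which is a coboundary in the cone. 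Signs will need care here but are routine.
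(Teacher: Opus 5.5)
The paper does not prove this proposition. It is recalled from \cite{MHF}, where the stage-$n$ obstruction is, by construction, the $\mathrm{Harr}_{\mathrm{DB}}$-class of the arity-$n$ triple of a $C_\infty$-mixed Hodge diagram on $H^*(A)$ whose lower triples have already been killed by chosen preimages under the cone differential. Measured against that construction, your Steps 1 and 2 are exactly the needed content, and they are correct:
\begin{itemize}
\item with $m_1=0$ and $m_3=\dots=m_{k-1}=0$, the arity-$(k+1)$ $C_\infty$-relation collapses to $\delta m_k=0$;
\item with also $\hat\varphi_1=\mathrm{id}$ and $\hat\varphi_2=\dots=\hat\varphi_{k-2}=0$, the arity-$k$ $\infty$-morphism relation collapses to $m^\Bbbk_k-m^\CC_k=\pm\delta\hat\varphi_{k-1}$;
\item the filtration conditions of Definition \ref{Pinftymodel-defi} place the triple in the cone;
\item since the triples in arities $3,\dots,k-1$ vanish, you may choose the zero preimage at every earlier stage, so the model is never modified and the stage-$k$ representative is the displayed triple.
\end{itemize}
That is everything the statement asserts.

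You should drop the independence paragraph at the end of Step 3. It is not part of the statement: $\theta_k$ here is the obstruction produced along the trivial choices supplied by this particular model. Moreover, your argument for it is not valid. Two $C_\infty$-mixed Hodge diagram structures on $H^*(A)$ that are isomorphic in $\Ho(\MHD_\infty)$ need not be related by a strict $\infty$-isomorphism $(g^\Bbbk,g^\CC)$ with identity linear part and $\hat\varphi'\circ g^\Bbbk=g^\CC\circ\hat\varphi$. In general the comparison square commutes only up to a homotopy, which is what the third component $h$ of a cone cochain records. Such homotopies feed nonlinear terms into the arity-$k$ triple, and your bookkeeping does not control them. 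It also ignores the components $g_2,\dots,g_{k-2}$, which need only be cocycles, and a possibly non-trivial linear part. The paper says right after the statement that higher obstructions depend on the choices made to kill the earlier ones. Proposition \ref{firstobswelldef-prop} asserts choice-independence of $\theta_4$ only under extra hypotheses, whereas an argument like yours would give it whenever $\theta_3=0$.
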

Note that the non-vanishing of an obstruction does not \textit{a priori} imply non-formality of $A$. This is because the construction of an obstruction depends on choices of elements witnessing the vanishing of the previous obstructions. Thus, an obstruction might not vanish for a certain choice of such elements but vanish for others. However, the first obstruction - and in certain settings, the second one - do not depend on these choices.

\begin{defi}
  We say that a graded algebra $H$ in $\MHS$ is \textit{pure} if $H^n$ is a pure Hodge structure of weight $n$ for each $n \in \ZZ$.
\end{defi}

\begin{prop}[\cite{MHF} Proposition $4.18$]
    \label{firstobswelldef-prop}
    Let $A \in \MHD$. The first obstruction $\theta_3$ to formality of $A$ does not depend on choices. Furthermore, suppose that one of the following conditions holds:
    \begin{enumerate}
        \item $H^*(A)$ is concentrated in even degrees or
        \item $A$ is a real mixed Hodge diagram and $H^*(A)$ is pure.
    \end{enumerate}
    Then, $\theta_3$ is trivial so the second obstruction $\theta_4$ is defined and this obstruction does not depend on choices either.      
\end{prop}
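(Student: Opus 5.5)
The plan is to work throughout with the explicit representatives of Proposition \ref{obsrep-prop}. Given $A$, Theorem \ref{htt-theo} gives a transferred $C_\infty$-mixed Hodge diagram $(H^*(A),m)$ with $m_1=0$, $m_2$ the cup product and $\hat\varphi_1=\mathrm{id}$. Then
\[
\theta_3=[(m^\Bbbk_3,m^\CC_3,\hat\varphi_2)]\in \mathrm{Harr}_{\mathrm{DB}}^{3,-1}(H^*(A)).
\]
The cocycle condition is the arity-$3$ part of the $C_\infty$-relations: $\delta m_3^\Bbbk=0$, $\delta m_3^\CC=0$, and $\delta\hat\varphi_2 = m_3^\CC-m_3^\Bbbk$, the last coming from $\hat\varphi$ being an $\infty$-morphism. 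This is exactly a cycle in the cone $C^*_{\mathrm{DB}}$ of Definition \ref{hodgeharr-coho-defi}. The filtration conditions place $m_3^\Bbbk$ in $W_0$, $m_3^\CC$ in $W_0F^0$ and $\hat\varphi_2$ in $W_0\otimes\CC$.

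\textbf{Step 1: independence of choices for $\theta_3$.} Two transferred structures on $H^*(A)$ modelling the same object of $\Ho(\MHD)$ are related by an $\infty$-isomorphism $f=(f^\Bbbk,f^\CC)$ compatible with the comparison maps. This uses Theorem \ref{equivCinfMHD-theo} and the fact that $\infty$-quasi-isomorphisms between minimal objects are invertible. After normalising, $f_1=\mathrm{id}$; otherwise $f_1$ is an automorphism of the algebra $H^*(A)$ in $\MHS$ and acts on the class. I would expand the arity-$3$ relations and the arity-$2$ compatibility square. They should give
\[
m'^\Bbbk_3=m^\Bbbk_3+\delta f^\Bbbk_2,\qquad m'^\CC_3=m^\CC_3+\delta f^\CC_2,\qquad \hat\varphi'_2=\hat\varphi_2+f^\CC_2-f^\Bbbk_2 .
\]
Here $f^\Bbbk_2\in W_0C^2$ and $f^\CC_2\in W_0F^0C^2$, since the components of $f$ preserve the filtrations. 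The difference of the two representatives is therefore the cone boundary of $(f^\Bbbk_2,f^\CC_2)$. The main technical point, and the step I expect to be the obstacle, is checking the signs and that $f_2$ is a Harrison cochain, i.e.\ vanishes on shuffles (the $C_\infty$ condition). I would also check the reduction to length $1$ via Remark \ref{mhdequivlength-rema}.

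\textbf{Step 2: vanishing of $\theta_3$.} In case (1), $H^*(A)$ is concentrated in even degrees. Any map $H^{\otimes 3}\to H$ or $H^{\otimes 2}\to H$ of odd degree $-1$ is zero, so the cochains $C^{3,-1}$ and $C^{2,-1}$ vanish and the relevant cone group is $0$. In case (2), purity makes every homogeneous component of $C^{k,-1}(H)$ a pure real Hodge structure of weight $-1$. Such a $V$ satisfies $V_\CC=F^0V_\CC\oplus\overline{F^0V_\CC}$. Hence $V_\RR\cap F^0=0$, since a real element of $F^0$ lies in $F^0\cap\overline{F^0}=0$. Also $V_\RR+F^0=V_\CC$, since $\bar x=(x+\bar x)-x$. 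So $\iota_\RR-\iota_F$ is an isomorphism in these bidegrees, and the cone is acyclic there. This gives $\mathrm{Harr}_{\mathrm{DB}}^{3,-1}=0$, and also $\mathrm{Harr}_{\mathrm{DB}}^{2,-1}=0$.

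\textbf{Step 3: independence for $\theta_4$.} The choices entering $\theta_4$ are a witness $\eta=(\eta^\Bbbk,\eta^\CC,\eta_\varphi)$ with cone boundary equal to the $\theta_3$ representative, used to modify $m$ by a gauge so that the arity-$3$ data vanish. Two witnesses differ by a cone cycle of arity $2$ and degree $-1$. I would compute that changing the witness by a cycle $z$ alters the $\theta_4$ representative by the Gerstenhaber-type bracket of $z$ with $m_2$, plus a boundary. By Step 2 that cycle group, $\mathrm{Harr}_{\mathrm{DB}}^{2,-1}$, vanishes in both cases. So $z$ is itself a boundary and can be absorbed by an $\infty$-isomorphism as in Step 1. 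It follows that $\theta_4$ is well defined.
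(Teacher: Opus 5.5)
The paper gives no proof of this proposition; it is quoted from \cite{MHF}, so your argument has to stand on its own.

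\textbf{Step 2 is correct.} In case (1) every $C^{k,-1}$ is zero. In case (2) every $C^{k,-1}$ is pure of weight $-1$, so $\iota_\RR-\iota_F$ is an isomorphism in each arity and the degree $-1$ part of the cone is acyclic. This kills both $\mathrm{Harr}^{3,-1}_{\mathrm{DB}}$ and $\mathrm{Harr}^{2,-1}_{\mathrm{DB}}$.

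\textbf{Step 1 is essentially right, with one correction.} Theorem \ref{equivCinfMHD-theo} only gives isomorphisms in a localization. The $\infty$-isomorphism you extract between two transferred models will in general make the comparison square commute only up to a homotopy $h$, with $h_1\in W_0C^{1,-1}\otimes\CC$. This adds $\pm\delta h_1$ to the third component. That term is the cone boundary of $(0,0,h_1)$, so your conclusion survives.

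\textbf{The genuine gap is Step 3.} First, the formula you announce cannot be the relevant one. For a cone cycle $z=(z_2^\Bbbk,z_2^\CC,z_1)$, the bracket with $m_2$ is, componentwise, $\delta z_2^\Bbbk=0$, $\delta z_2^\CC=0$ and $\delta z_1$, which has arity $2$. None of this can contribute to the representative $(m_4^\Bbbk,m_4^\CC,\hat\varphi_3)$, whose components have arities $4,4,3$.

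What actually appears when you replace $\eta$ by $\eta+z$ are terms bilinear in $\eta$ and $z$, and terms quadratic in $z$. With linear gauges these include cup-square terms $z_2\smile z_2$ in the $m_4$'s. In the comparison component there is a term $\tfrac12[z_2^\Bbbk,z_2^\CC]$, coming from composing the $\Bbbk$-gauge, the $\CC$-gauge and the homotopy $z_1$.

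Second, the step ``$z$ is a boundary, hence absorbed by an $\infty$-isomorphism as in Step 1'' does not follow. If $z=D(w)$, then $w_1^\Bbbk,w_1^\CC$ are degree $-1$ endomorphisms of $H$. These are homotopy data, not components of an $\infty$-isomorphism. Step 1 says nothing about how a gauge homotopic to the identity acts on arity-$4$ data. So the vanishing of $\mathrm{Harr}^{2,-1}_{\mathrm{DB}}$ does not by itself close the argument.

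You still need the computation you skipped: that the total change lies in $D(C^{3,-2}_{\mathrm{DB}})$. This can be done as follows.
\begin{itemize}
\item With exponential gauges, $\delta z_2=0$ and $m_3'=0$ leave $m_4^\Bbbk,m_4^\CC$ unchanged up to Harrison coboundaries of $W_0$ (resp.\ $W_0F^0$) cochains.
\item The cycle relation $z_2^\Bbbk-z_2^\CC=\pm\delta z_1$ gives $[z_2^\Bbbk,z_2^\CC]=\mp\delta[z_2^\Bbbk,z_1]$, with $[z_2^\Bbbk,z_1]\in W_0C^{2,-2}\otimes\CC$.
\item The bilinear terms assemble into $D$ of a bracket of $\eta$ with $z$.
\end{itemize}
In case (1) all of this is vacuous, since $C^{2,-1}_{\mathrm{DB}}=0$ forces $\eta=0$. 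In case (2) it is the actual content of the claim, and it is missing from your proposal.
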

As a consequence, we have:
\begin{coro}
  \label{firstobsnonvanish-coro}
    If the first obstruction $\theta_3$ to formality of $A \in \Pp\textrm{-}\MHD$ is non-trivial, then $A$ is not formal. Similarly, if $A$ satisfies one of the conditions of the previous proposition and $\theta_4$ is non-trivial, then it follows that $A$ is not formal.
\end{coro}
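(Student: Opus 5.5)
The corollary follows by combining Proposition \ref{firstobswelldef-prop} with the contrapositive of the implication ``formal $\Rightarrow$ all obstructions can be chosen trivial''. The plan is to show that if $A$ is formal, then some admissible sequence of choices in the construction of Theorem \ref{obst-theo} makes every obstruction vanish. Independence of choices then forces $\theta_3$, and under conditions (1) or (2) also $\theta_4$, to vanish for every choice.

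First, suppose $A$ is formal, so $A \cong H^*(A)$ in $\Ho(\MHD)$. By Theorem \ref{equivCinfMHD-theo}, the $C_\infty$-mixed Hodge diagram $(H^*(A),m)$ produced by Theorem \ref{htt-theo} is then isomorphic in $\Ho(\MHD_\infty)$ to $H^*(A)$ with its strict structure. By Remark \ref{mhdequivlength-rema} we may work in length $1$. There we may take the transferred structure to be the trivial one: $m^\Bbbk_n = 0$ and $m^\CC_n = 0$ for $n \geq 3$, and $\hat\varphi_n = 0$ for $n \geq 2$. Such a structure is a legitimate output of the homotopy transfer, because formality gives a zig-zag of quasi-isomorphisms and transfer along $H^*(A)$ itself is the identity.

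Next, apply Proposition \ref{obsrep-prop} to this model. The triples $(m^\Bbbk_k,m^\CC_k,\hat\varphi_{k-1})$ vanish for all $k \geq 3$, so every obstruction, in particular $\theta_3$, is represented by zero for this choice. Proposition \ref{firstobswelldef-prop} says $\theta_3$ is independent of choices, so $\theta_3 = 0$ for every choice, which contradicts the hypothesis. For the second statement, under condition (1) or (2) the same proposition says $\theta_3$ is trivial and $\theta_4$ is well defined independently of choices. The zero model therefore gives $\theta_4 = 0$ whenever $A$ is formal, which proves the second claim.

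The main obstacle is the first step: making precise that ``choices'' in the construction of $\theta_k$ range over all $C_\infty$-models on $H^*(A)$ that are $\infty$-quasi-isomorphic to $A$ (equivalently, over all choices of transfer data). Only with this interpretation does formality allow the strict model to be one of the admissible choices. I would argue it through Theorem \ref{equivCinfMHD-theo}. An isomorphism in $\Ho(\MHD_\infty)$ between two minimal $C_\infty$-mixed Hodge diagrams with zero differential can be realised by an $\infty$-isomorphism. Consequently the obstruction theory of \cite{MHF}, whose classes are invariant under $\infty$-isomorphism, may be started from either model.
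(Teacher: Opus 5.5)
Your argument is correct and is the reasoning the paper leaves implicit when it states the corollary ``as a consequence'' of Proposition \ref{firstobswelldef-prop}. For formal $A$, the trivial $C_\infty$-structure on $H^*(A)$ is an admissible model, so Proposition \ref{obsrep-prop} makes every obstruction vanish for that choice, and independence of choices then forces $\theta_3$ (and $\theta_4$ under conditions (1) or (2)) to vanish. Your extra step of realising the isomorphism in $\Ho(\MHD_\infty)$ by an $\infty$-isomorphism is not needed: the paper already starts the obstruction theory from any $C_\infty$-mixed Hodge diagram on $H^*(A)$ quasi-isomorphic to $A$, not only from a literal output of homotopy transfer.
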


When the cohomology of a mixed Hodge diagram is pure, the obstructions of Theorem \ref{obst-theo} get simplified.
\begin{theo}
  \label{obst-coro}
  Let $A \in \mathsf{MHD}$ be such that $H^*(A)$ is pure. Then, there exist successively defined classes 
  \[\phi_k \in \mathrm{Harr}_{\mathrm{Ext}}^{k,1-k}(H^*(A)) \quad \text{for } k \geq 2, \]
  such that if all classes are trivial then $A$ is formal.
\end{theo}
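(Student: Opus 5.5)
Suppose $H=H^*(A)$ is pure. The plan is to run the obstruction theory of Theorem \ref{obst-theo} and reduce the Deligne--Beilinson--Harrison classes to $\mathrm{Ext}$-Harrison classes, with a shift in arity. The key observation concerns the complex $C^{k,n}(H)=\Hom(H^{\otimes k}/\mathrm{Sh},H)$ of homological degree $n$. Because each $H^m$ is pure of weight $m$, the mixed Hodge structure on $C^{k,n}(H)$ is pure of weight $n$. Indeed, a map $H^{m_1}\otimes\cdots\otimes H^{m_k}\to H^{m_1+\dots+m_k+n}$ shifts weight by exactly $n$.

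Consider the Deligne--Beilinson cone $C^*_{\mathrm{DB}}(H)$ of Definition \ref{hodgeharr-coho-defi} in homological degree $n$.
\begin{itemize}
\item For $n>0$, $W_0$ of a pure structure of weight $n$ vanishes, so this part contributes nothing.
\item For $n<0$, $W_0$ is everything, and by purity together with Proposition \ref{mh-ext} the map $\iota_\Bbbk-\iota_F$ is surjective with kernel $\Hom_{\MHS}$-type data. So the cone is quasi-isomorphic to $\mathrm{coker}(\iota_\Bbbk-\iota_F)[-1]=\mathrm{Ext}^1_{\MHS}(H^{\otimes k}/\mathrm{Sh},H)[-1]$. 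For $n\le -1$ the $\Hom_{\MHS}$ term vanishes, since a morphism of pure Hodge structures of different weights is zero.
\item For $n=0$ one gets only $\Hom_{\MHS}$ and no $\mathrm{Ext}$.
\end{itemize}
Since the differential $\delta$ preserves the homological degree $n$ and only changes arity, the column-wise quasi-isomorphism gives
\[ \mathrm{Harr}^{k,2-k}_{\mathrm{DB}}(H)\cong \mathrm{Harr}^{k,2-k}_{\mathrm{Ext}}(H)\ \text{-type groups, up to the one-step shift in the cone}. \]
Carefully tracking the cone shift identifies $\theta_k$ with a class in $\mathrm{H}^k$ of $C^{*,1-k}_{\mathrm{Ext}}$. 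Reindexing by $k\mapsto k-1$ so that it matches the statement $\phi_k\in\mathrm{Harr}^{k,1-k}_{\mathrm{Ext}}$ is the step where I must be careful.

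Concretely, I would use Proposition \ref{obsrep-prop}. The representative of $\theta_k$ is $(m^\Bbbk_k,m^\CC_k,\hat\varphi_{k-1})$. For $k\ge 3$, $m_k$ has degree $2-k<0$, and $\hat\varphi_{k-1}$ has degree $2-(k-1)-1=2-k$ in the cone, i.e. it is an arity-$(k-1)$ map of degree $1-(k-1)$... The natural candidate is to set $\phi_{k-1}:=[\hat\varphi_{k-1}]\in\mathrm{Ext}^1(H^{\otimes k-1}/\mathrm{Sh},H)$ of degree $2-k=1-(k-1)$. This has exactly the form required, with $\phi_j\in \mathrm{Harr}^{j,1-j}_{\mathrm{Ext}}$ for $j\ge2$. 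I would then check two things:
\begin{enumerate}
\item The quasi-isomorphism above sends $\theta_k$ to $\phi_{k-1}$, because the $m_k$ components are killed in the quotient by $W_0\Hom_\Bbbk+W_0F^0\Hom_\CC$.
\item $\phi_{k-1}$ is a cocycle, which follows from the $\infty$-morphism equations for $\hat\varphi$ modulo these subspaces.
\end{enumerate}

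Given this, vanishing of $\phi_{k-1}$ is equivalent to vanishing of $\theta_k$, and "successively defined" transfers directly. Theorem \ref{obst-theo} then yields formality when all $\phi_k$ vanish.

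The main obstacle I expect is the bookkeeping in the middle step. One part is verifying that the cone for $n<0$ is really quasi-isomorphic to the shifted $\mathrm{Ext}$ column, which needs surjectivity of $\iota_\Bbbk-\iota_F$ when $W_0=$ everything and the weight is negative. The other part is getting signs and arity shifts consistent so that $\delta$ on $C_{\mathrm{DB}}$ corresponds to the map (\ref{Harrextdif-eq}). I would first check this on $k=3$ by hand, comparing with Proposition \ref{firstobswelldef-prop}(2) in the real case, where $\theta_3$ is trivial.
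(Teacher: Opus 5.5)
Your route is the paper's. The paper just asserts that $\pi\colon \mathrm{Harr}^{k,m}_{\mathrm{DB}}(H)\to\mathrm{Harr}^{k-1,m}_{\mathrm{Ext}}(H)$, $[f_\Bbbk,f_\CC,h]\mapsto[h]$, is injective for $m\neq 0$, and then applies Theorem \ref{obst-theo} with $\phi_{k-1}:=\pi(\theta_k)$. Your degree-by-degree analysis of the cone is the justification of that injectivity, and for $m<0$ it even gives an isomorphism.

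One step is misstated, though, and you single it out as the thing to verify. The map $\iota_\Bbbk-\iota_F$ is \emph{not} surjective in general. Its cokernel is precisely $\mathrm{Ext}^1_{\MHS}$, which is typically nonzero; for $\Bbbk(1)$ it is $\CC/(2\pi i)\Bbbk$. Moreover, if the map were surjective, the cone would be quasi-isomorphic to the shifted kernel, not the cokernel, so your next sentence contradicts it.

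What you need is injectivity, and your own remark already gives it. The kernel consists of pairs $(x,x)$ with $x\in W_0C_\Bbbk\cap W_0F^0C_\CC$, i.e.\ degree-$n$ morphisms of mixed Hodge structures. These vanish for $n\neq 0$ because $C^{k,n}(H)$ is pure of weight $n$.

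Since $\delta$ preserves $n$, levelwise injectivity makes the projection $(f_\Bbbk,f_\CC,h)\mapsto[h]$ from the cone onto the shifted cokernel complex a quasi-isomorphism. This projection is exactly $\pi$. Consequently $[\hat\varphi_{k-1}]$ is automatically a cocycle, and your item (2) needs no separate check via the $\infty$-morphism equations.
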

\begin{proof}
  Let $A \in \mathsf{MHD}$ be such that $H = H^*(A)$ is pure. Then, the natural map
\begin{align}
    \label{harrextmap-eq}
    \mathrm{Harr}_{\mathrm{DB}}^{k,m}(H) &\xrightarrow{\pi} \mathrm{Harr}_{\mathrm{Ext}}^{k-1,m}(H) \\ \nonumber
    [f_\Bbbk,f_\CC,h] &\mapsto [h]
  \end{align}
is injective for $m \neq 0$. The result is then a direct consequence of Theorem \ref{obst-theo}.
\end{proof}

\section{Deligne-Beilinson cohomology}
\label{DBcoho-sec}
In this section, we review the notion of Deligne-Beilinson cohomology of a mixed Hodge complex $A$, as defined by Beilinson in \cite{Beilinson}. We review two complexes $C_\mathrm{DB}(A)$ and $P_\mathrm{DB}(A)$ that compute this cohomology, the latter one being a cdga when $A$ is a mixed Hodge diagram. As an extra result, which we do not use in the rest of this work, we compute the homotopy transfer of the product in $P_\mathrm{DB}(A)$ to $C_\mathrm{DB}(A)$.

Given $A \in \MHC$ with comparison morphisms
\[
  \begin{tikzcd}[column sep = tiny]
    & (A_1,W) & & \cdots & \\
    (A_0,W) = (A_\Bbbk,W) \otimes \CC \arrow[ru,"\varphi_{01}"] & & (A_2,W) \arrow[lu,"\varphi_{21}"'] \arrow[ru,"\varphi_{23}"] & & \arrow[lu,"\varphi_{s s-1}"'] (A_s,W) = (A_\CC,W)  
  \end{tikzcd}   \]
we use the following notation for the sum of the complexes on the top and on the bottom of this diagram:
\begin{align*}
  &\Sigma_{\mathrm{bot}} := W_0 A_\Bbbk \oplus F^0 W_0 A_\CC \bigoplus_{1 \leq i \leq s/2-1} W_0 A_{2i}, \\
  &\Sigma_{\mathrm{top}} := \bigoplus_{1 \leq j \leq s/2} W_0 A_{2j-1}.
\end{align*} 
Here, we see $W_0 A_i$, $\Sigma_{\mathrm{bot}}$ and $\Sigma_{\mathrm{top}}$ as $\Bbbk$-vector spaces.
\begin{defi}
  \label{Delcmpx-defi}
  Given $A \in \MHC$, the \textit{$0$-th Deligne-Beilinson complex} of $A$ is the cochain complex
  \[ C^*(A,0)_{\mathrm{DB}} := \mathrm{Cone}( \Sigma_{\mathrm{bot}} \xrightarrow{\bigoplus \varphi_{u} - \varphi_{u'}} \Sigma_{\mathrm{top}}), \]
  where the sum ranges over successive comparison morphisms $u$ and $u'$ with the same target. 
\end{defi}
\begin{exam}
  For instance, when $s=2$, a mixed Hodge complex $A$ has shape
\[ \begin{tikzcd}[column sep = small]
    & (A_1,W) &  \\
    (A_\Bbbk,W) \otimes \CC \arrow[ru,"\varphi_{01}"] & & \arrow[lu,"\varphi_{21}"'] (A_\CC,W),
  \end{tikzcd} \]
and the complex $C^*(A,0)_{\mathrm{DB}}$ is given by
\[ C^*(A,0)_{\mathrm{DB}} = \mathrm{Cone}( W_0 A_\Bbbk \oplus F^0 W_0 A_\CC \xrightarrow{\varphi_{01} - \varphi_{21}} W_0 A_1). \]
\end{exam}

\begin{defi}
  The \textit{Tate twist} $\Bbbk(p)$ is the pure Hodge structure on $(2 \pi i)^p \Bbbk$ concentrated in degree $0$, weight $-2p$ and with \[ F^p = \CC \supset F^{p+1} = 0. \]
\end{defi}
For $A \in \MHC$, denote $A(p) := A \otimes \Bbbk(p)$.
\begin{defi}
  For $p \in \ZZ$, the \textit{$p$-th Deligne-Beilinson complex} is given by
  \[ C^*(A,p)_{\mathrm{DB}} := C^*(A(p)[2p],0)_{\mathrm{DB}}.\] 
  The \textit{$p$-th Deligne-Beilinson cohomology} of $A$, denoted by $H_{\mathrm{DB}}^*(A,p)$, is the cohomology of $C(A;p)_{\mathrm{DB}}$.
\end{defi}

Summing the Deligne-Beilinson complexes over all $p \in \ZZ$ yields a complex
\[
  C^*(A)_{\mathrm{DB}} := \bigoplus_{p \in \ZZ} C^*(A,p)_{\mathrm{DB}},
\]
whose cohomology, the Deligne-Beilinson cohomology of $A$, is denoted by $H^*_{\mathrm{DB}}(A)$. This gives a functor 
\begin{align}
  \label{absHodgecpx-eq}
  C^*(-)_{\mathrm{DB}}: \MHC &\to \ch{\Bbbk} \\
  A &\mapsto C^*(A)_{\mathrm{DB}} \nonumber
\end{align}
\begin{rema}
  Technically, we have defined the Deligne-Beilinson complex for mixed Hodge complexes of even length $s$. For odd $s$, we define the Deligne-Beilinson complex of $A \in \MHC_s$, as the Deligne-Beilinson complex of its inclusion into $\MHC_{s+1}$ by inserting an identity as the last comparison morphism.
\end{rema}  
If $A \in \mathsf{MHD}$, then for each $\alpha \in [0,1]$, the complex $C^*(A)_{\mathrm{DB}}$ admits a product 
\[ \cdot_\alpha : C^*(A,p)_{\mathrm{DB}} \otimes C^*(A,q)_{\mathrm{DB}} \to C^*(A,p+q)_{\mathrm{DB}}, \]
(see section $1$ of \cite{Beilinson}). It is given by
\begin{align}
  \label{Del-prod}
  ((a_i)_{\mathrm{bot}}, (a_j)_{\mathrm{top}}) \otimes \big((b_i)_{\mathrm{bot}}, (b_j)_{\mathrm{top}}) \mapsto  ((a_i \cdot b_i)_{\mathrm{bot}}, (\theta_j)_{\mathrm{top}}\big),
\end{align} 
where 
\begin{align*}
  \theta_j = &\alpha (a_j \cdot \varphi_{u}(b_{j-1}) + (-1)^{|a|} \varphi_{u'}(a_{j+1}) \cdot b_j) \\ &+ (1- \alpha)( a_j \cdot \varphi_{u'}(b_{j+1}) + (-1)^{|a|} \varphi_{u}(a_{j-1}) \cdot b_j).
\end{align*} 
Here, for each $j$ indexing an element in $\Sigma_{\mathrm{top}}$, $\varphi_u = \varphi_{j-1,j}$ and $\varphi_{u'} = \varphi_{j,j+1}$ are the comparison morphisms with target $A_j$. 
\begin{exam}
  For instance, when $s=2$, the formulas become
\begin{align*}
  (a_\Bbbk,a_\CC,a_h) &\otimes \big(b_\Bbbk,b_\CC,b_h) \mapsto  (a_\Bbbk \cdot b_\Bbbk, a_\CC \cdot b_\CC, \, \\ &\alpha (a_h \cdot \varphi_{01}(b_\Bbbk) + (-1)^{|a|} \varphi_{21}(a_\CC) \cdot b_h)  \\
  &+ (1- \alpha)( a_h \cdot \varphi_{21}(b_{\CC}) + (-1)^{|a|} \varphi_{01}(a_\Bbbk) \cdot b_h) \big).
\end{align*}
\end{exam}
For $\alpha = 0$ and $\alpha = 1$ the product is associative and for $\alpha = 1/2$ it is graded-commutative. For all values of $\alpha$, the products are homotopy equivalent, so the induced products on cohomology are equal (see section $1$ of \cite{Beilinson}). Thus, $H^*_{\mathrm{DB}}(A)$ inherits an associative and graded-commutative product.
\vspace{1ex}
\par Although $H^*_{\mathrm{DB}}(A)$ is a cdga, there is no natural associative and commutative product on $C^*(A)_\mathrm{DB}$. There is, however, a larger complex quasi-isomorphic to it which does admit the structure of a cdga. Given $A \in \MHC$, let
\[ \tilde{\Sigma}_{\mathrm{top}} = \bigoplus_{1 \leq j \leq s/2} \Lambda(t,dt) \otimes W_0 A_{2i-1}. \]
\begin{defi}
  Given $A \in \mathsf{MHC}$, define the complex
  \begin{align*}
      P_{\mathrm{DB}}(A,0) = \{ ((a_i)_{\mathrm{bot}}, (\omega_j)_{\mathrm{top}}) \in \Sigma_{\mathrm{bot}} \times \tilde{\Sigma}_{\mathrm{top}} \, | \, &(\omega_j)_0 = \varphi_u(a_{j-1}), \\ &(\omega_{j})_1 = \varphi_{u'}(a_{j+1}) \},
  \end{align*}
  where $u : j-1 \to j$, $u' : j+1 \to j$ and $(\omega_j)_0$ and $(\omega_{j})_1$ are the evaluations at $t=0$ and $t=1$, respectively. Given $p \in \ZZ$, define
  \[ P_{\mathrm{DB}}(A,p) := P_{\mathrm{DB}}(A(p)[2p],0). \]
\end{defi}
If $A$ is a mixed Hodge diagram, then there is a natural product
\[ P_{\mathrm{DB}}(A,p) \otimes P_\mathrm{DB}(A,q) \to P_\mathrm{DB}(A,p+q) \]
induced by the products of $A$ and of $\Lambda(t,dt)$. Thus, for $A \in \mathsf{MHD}$, the direct sum
\begin{equation}
  \label{PDBdefi-eq}
  P_\mathrm{DB}(A) = \bigoplus_{p \in \ZZ} P_\mathrm{DB}(A,p)
\end{equation} 
is a cdga. This defines a functor
\begin{align*}
  P_\mathrm{DB}(-) : \mathsf{MHD} &\to \mathsf{cdga}_\Bbbk \\
  A &\mapsto P_\mathrm{DB}(A).
\end{align*} 
\begin{rema}
  This complex was considered before, for instance, in \cite{highregDB-thesis} and \cite{BeilRegSp}, for mixed Hodge complexes of length $2$ and identity comparison morphisms.
\end{rema}
The complexes $P_\mathrm{DB}(A)$ and $C_{\mathrm{DB}}^*(A)$ are homotopy equivalent. The following is proven for instance in section $3.3$ of \cite{highregDB-thesis}.
\begin{lemm}
  \label{Delcomp-equiv-lemm}
  For each $p \in \ZZ$, there is a contraction
  \[
    \begin{tikzcd}
      P_{\mathrm{DB}}(A,p) \arrow[loop left, "\eta(p)"] \arrow[r,shift left,"ev(p)"] \arrow[r,shift right, leftarrow, "l(p)"'] & C_{\mathrm{DB}}^*(A,p), 
    \end{tikzcd} 
  \]
  with morphisms defined by 
  \begin{align*}
    &ev(p)((a_i)_{\mathrm{bot}},(\omega_j)_\mathrm{top}) =  \bigg((a_i)_\mathrm{bot}, \bigg(\int_0^1 \omega_j \bigg)_\mathrm{top} \, \bigg), \\
    &s(p)((a_i)_{\mathrm{bot}},(a_j)_\mathrm{top}) = ((a_i)_{\mathrm{bot}}, (t \varphi_{u'}(a_{j+1}) + (1-t) \varphi_u(a_{j-1}) + dt a_j)_\mathrm{top}) \\
    &\eta(p)((a_i)_{\mathrm{bot}},(\omega_j)_\mathrm{top}) = \bigg( (0)_{\mathrm{bot}}, \bigg(t \int_0^1 \omega_j - \int_0^t \omega_j \bigg)_\mathrm{top} \bigg).
  \end{align*}
  More precisely, these maps satisfy the equation 
  \[ d \eta + \eta d = l(p) ev(p) - id. \]
  Moreover, these also satisfy 
  \[ ev(p)  l(p) = id, \quad \eta(p)^2 = ev(p) \eta(p) = \eta(p)l(p) = 0. \] 
\end{lemm}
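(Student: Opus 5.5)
We verify the identities by direct computation, one summand $W_0A_j$ of $\tilde{\Sigma}_{\mathrm{top}}$ at a time. The bottom components are left unchanged by $ev(p)$, $l(p)$ and $\eta(p)$ (the last kills them), so only the top components need checking. First we reduce to $p=0$. Since $P_{\mathrm{DB}}(A,p)=P_{\mathrm{DB}}(A(p)[2p],0)$, $C^*_{\mathrm{DB}}(A,p)=C^*_{\mathrm{DB}}(A(p)[2p],0)$, and the formulas are natural in the mixed Hodge complex, it suffices to treat $p=0$. We then check that the maps are well defined. For $l=s(0)$, the form $\omega_j=t\varphi_{u'}(a_{j+1})+(1-t)\varphi_u(a_{j-1})+dt\,a_j$ restricts to $\varphi_u(a_{j-1})$ at $t=0$ and to $\varphi_{u'}(a_{j+1})$ at $t=1$, so it lies in $P_{\mathrm{DB}}(A,0)$. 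For $\eta$, the component $t\int_0^1\omega_j-\int_0^t\omega_j$ vanishes at $t=0$ and at $t=1$, and the bottom part is $0$, so the boundary conditions hold. All maps preserve $W_0$ and $F^0$, since the comparison maps do.

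Next we check that $ev$ and $l$ are chain maps. We use the convention that $\int_0^1$ picks out the $dt$-coefficient, $\int_0^1(f(t)x+g(t)\,dt\,y)=(\int_0^1 g)\,y$, and the Stokes formula
\[ \int_0^1 d\omega=\omega|_{t=1}-\omega|_{t=0}\pm d\int_0^1\omega. \]
With this, $ev$ intertwines the differential on $P_{\mathrm{DB}}$ with the cone differential $d(a,h)=(da,\,\pm dh+\varphi_u(a_{j-1})-\varphi_{u'}(a_{j+1}))$, up to the sign convention for the cone. The chain-map property of $l$ is a similar direct computation. The identity $ev\,l=\mathrm{id}$ follows because $\int_0^1$ of the $l$-form is exactly $a_j$.

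The remaining side conditions are quick. $ev\,\eta=0$ holds because $\int_0^1$ of the top component is $\int_0^1\omega_j-\int_0^1\omega_j=0$ and the bottom part is $0$. $\eta\,l=0$ holds because for $\omega=l(a)$ the expression $t\int_0^1\omega-\int_0^t\omega$ computes to $ta_j-ta_j=0$, using that only the $dt\,a_j$ term contributes to the fibre integral. $\eta^2=0$ holds because $\eta$ lands in forms with no $dt$-part, on which $\int$ vanishes.

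The main obstacle is the homotopy identity $d\eta+\eta d=l\,ev-\mathrm{id}$. The plan is to test it on the two kinds of elements, $\omega_j=f(t)x$ and $\omega_j=g(t)\,dt\,y$, using the fibrewise Stokes formula $\int_0^t d\omega=\omega(t)-\omega(0)\pm d\int_0^t\omega$.
\begin{itemize}
\item For $\omega=f(t)x$, $\eta$ sends it to $0$. The term $\eta d\omega$ gives $t(f(1)-f(0))x-(f(t)-f(0))x$. The term $l\,ev$ gives $tf(1)x+(1-t)f(0)x$, using the boundary conditions $f(0)x=\varphi_u(a_{j-1})$ and $f(1)x=\varphi_{u'}(a_{j+1})$. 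Subtracting $\omega$ from $l\,ev$ and comparing, the two sides agree.
\item The $g\,dt\,y$ case is analogous, with the signs $(-1)^{|y|}$ handled via the Koszul rule.
\end{itemize}
Bottom components contribute consistently, since $l\,ev$ restricts to the identity on them and $\eta$ vanishes there.

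We expect the only real difficulty to be sign bookkeeping between the cone convention and the Koszul signs for $dt$. We would fix the cone differential and orient the integration so that the identity holds, possibly replacing $\eta$ by $-\eta$ if the chosen convention requires it.
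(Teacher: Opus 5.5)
The paper does not prove this lemma itself; it defers to Section~3.3 of \cite{highregDB-thesis}. Your direct componentwise verification is therefore a self-contained replacement, and it is correct. What it adds over the citation is that it pins down the sign conventions the paper leaves implicit. They are in fact forced:
\begin{itemize}
\item Given the term $dt\,a_j$ in $l$, the identity $ev\,l=\mathrm{id}$ requires $\int$ to extract the $dt$-coefficient with $dt$ on the left. Stokes then reads $\int_0^1 d\omega=\omega(1)-\omega(0)-d\int_0^1\omega$.
\item With that, $ev$ being a chain map forces the cone differential $d(a,h)=(da,\ \varphi_{u'}(a_{j+1})-\varphi_u(a_{j-1})-dh)$.
\item $l$ is a chain map for this differential, and the homotopy identity holds on the nose with $\eta$ exactly as stated. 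So you will not need to replace $\eta$ by $-\eta$.
\end{itemize}
This cocycle convention is opposite to the condition $\varphi_1(a_\Bbbk)-\varphi_2(a_\CC)=da_h$ used in Section~\ref{DBprods-sec}. That inconsistency is harmless, since the paper never fixes a cone convention.

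Three small repairs are needed.

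First, your stated reason for $ev\,\eta=0$ evaluates $\eta(\omega)$ at $t=1$ rather than integrating it. The correct reason is the one you give for $\eta^2=0$: $\eta$ lands in $dt$-free forms, whose fibre integral vanishes.

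Second, in the homotopy identity the boundary conditions constrain only the whole $dt$-free part $\omega^0$ of $\omega$, not individual monomials $f(t)x$. Split $(a,\omega)=(a,\omega^0)+(0,\omega^{dt})$. Both summands lie in $P_{\mathrm{DB}}(A,0)$, because evaluation at $t=0,1$ kills $dt$-terms. For any $dt$-free $\omega^0$ you have $\eta\omega^0=0$ and $\eta d\omega^0=t\,\omega^0(1)+(1-t)\,\omega^0(0)-\omega^0(t)$. The boundary conditions, used once for the whole of $\omega^0$, identify this with the top component of $(l\,ev-\mathrm{id})(a,\omega^0)$.

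Third, the $dt$-case is not quite ``analogous'' to the $dt$-free case. There $d\eta$ was zero; here $d\eta\neq 0$, and its $dy$-terms must cancel those coming from $\eta d$. For $\omega=g(t)\,dt\,y$ and $G(t)=\int_0^t g$,
\[ d\eta\omega=(G(1)-g(t))\,dt\,y+(tG(1)-G(t))\,dy,\qquad \eta d\omega=\eta(-g\,dt\,dy)=-(tG(1)-G(t))\,dy. \]
Their sum is $(G(1)-g(t))\,dt\,y$, which is exactly $(l\,ev-\mathrm{id})(0,\omega)$.
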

The morphisms $ev = \bigoplus_{p \geq 0} ev(p)$ and $s = \bigoplus_{p \geq 0} s(p)$ are natural homotopy equivalences $P_\mathrm{DB}(-) \Rightarrow C^*_\mathrm{DB}(-)$ and $C^*_\mathrm{DB}(-) \Rightarrow P_\mathrm{DB}(-)$. As an independent result, we compute the homotopy transfer of the commutative product of $P_\mathrm{DB}(A)$ to $C_{\mathrm{DB}}^*(A)$ through the contraction just defined. This will not be used in the rest of the work.
\begin{prop}
  \label{PDB-transf-prop}
  Given $A \in \MHD$, the $C_\infty$-structure on $C_{\mathrm{DB}}^*(A)$ given by homotopy transfer of the product of $P_\mathrm{DB}(A)$ has the following structure morphisms. The product $m_2$ is the $\alpha$-product $\cdot_{1/2}$ for $\alpha = 1/2$ (see (\ref{Del-prod})). Given $k\geq 3$ and 
  \[ x^m = ((x^m_i)_\mathrm{bot}, (x^m_j)_\mathrm{top}) \in C_{\mathrm{DB}}^*(A), \] 
  we have
  \[
    m_k(x^1,\dots,x^k) = \bigg( (0)_\mathrm{bot}, \bigg( \frac{(-1)^{k+1}}{k!} B_{k+1}^-\cdot e_{k,j}(x^1, \dots x^k) \bigg)_\mathrm{top} \, \bigg).
  \]
  Here, $B_k^-$ denotes the $k$-th Bernoulli number and the element
  \[e_{k,j}(x^1,...,x^k) \in A_{2j-1} \] 
  is given by
  \begin{align*}
      e_{k,j}(x^1, \dots, x^k) = \sum_{m = 1}^{k} &(-1)^{\epsilon_{k,m}(x^1,\dots,x^k)}  {{k-1}\choose{m-1}} \cdot \\ &x^1_j \cdots x^{m-1}_j  (\varphi_u(x^m_{j-1}) - \varphi_{u'}(x^m_{j+1})) x^{m+1}_j \cdots x^k_j
  \end{align*} 
  where $\epsilon_{k,m}(x^1,\dots,x^k)$ is defined recursively:
  \begin{align*}
    &\epsilon_{k,k}(x^1,\dots,x^k) = \sum_{s = 2}^{k-1} (s+1)|x^{k-s}|, \quad \epsilon_{k,m}(x^1,\dots,x^k) = 0 \quad \text{for } k < m\\
    &\epsilon_{k,m}(x^1,\dots,x^k) = \epsilon_{k-1,m}(x^1,\dots,x^{k-1}) + \sum_{s=1}^{k-1} |x^s| \quad \text{for } k > m.
  \end{align*}
\end{prop}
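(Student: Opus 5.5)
The plan is to apply the homotopy transfer theorem for $C_\infty$-algebras (tree formula / Kontsevich–Soibelman, or Markl's recursive formula) to the contraction of Lemma \ref{Delcomp-equiv-lemm}, and then evaluate the resulting iterated integrals explicitly. With the side conditions $ev\, l = id$ and $\eta^2 = ev\,\eta = \eta\, l = 0$, the transferred operations are sums over planar binary trees. Each tree has the $k$ inputs decorated by $l$, each internal vertex decorated by the product $\mu$ of $P_\mathrm{DB}(A)$, each internal edge decorated by $\eta$, and the root decorated by $ev$. These operations are automatically $C_\infty$ because $\mu$ is commutative, so we only need to compute them.

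For $m_2 = ev\,\mu(l\otimes l)$, we compute directly. The bottom components multiply componentwise. In the top component $j$ we get
\[
\int_0^1 \big(t\varphi_{u'}(a_{j+1})+(1-t)\varphi_u(a_{j-1})+dt\,a_j\big)\big(t\varphi_{u'}(b_{j+1})+(1-t)\varphi_u(b_{j-1})+dt\,b_j\big).
\]
Only the terms containing one $dt$ survive, and $\int_0^1 t\,dt=\int_0^1(1-t)\,dt = 1/2$. This gives exactly $\cdot_{1/2}$, with the sign $(-1)^{|a|}$ coming from moving $dt$ past $a$.

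For $k\ge 3$, note first that $\eta$ kills bottom components and that its output vanishes at $t=0$ and $t=1$. Hence every internal edge carries a form supported purely in the top, and every output has zero bottom part. In the top, $l(x^m)$ has the form $\lambda_m(t) + dt\,x^m_j$ with $\lambda_m(t)=\varphi_u(x^m_{j-1}) + t\,(\varphi_{u'}(x^m_{j+1})-\varphi_u(x^m_{j-1}))$. Applying $\eta$ to a product of such forms gives a polynomial in $t$ with no $dt$ term, since $\eta$ takes $\omega$ to $t\int_0^1\omega-\int_0^t\omega$. Therefore, in any tree, each vertex can contribute at most one $dt$, and the final $\int_0^1$ picks out exactly one $dt$ overall. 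Counting degrees, one input must supply its $dt\,x^m_j$, while the $t$-polynomial parts supply the other factors.

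The key simplification is that a factor $\lambda_{m'}(t)$ appears at a leaf only through its $t$-linear coefficient. Ultimately the constant term $\varphi_u(x^{m'}_{j-1})$ together with the $t$-coefficient combine, but I expect the cleaner route is this: a term with no $t$-derivative information collapses, because $\eta$ of a constant times $dt$ integrates to a linear function that vanishes at both ends. Products of such functions then feed into further $\eta$'s. I would organize the computation as follows. Fix the position $m$ that supplies $dt$. Then show that the total contribution is a universal constant $c_{k,m}$ times $x^1_j\cdots(\varphi_u(x^m_{j-1})-\varphi_{u'}(x^m_{j+1}))\cdots x^k_j$. Here the difference arises because $\int_0^1 d\lambda = \varphi_{u'}-\varphi_u$, after an integration by parts that trades $dt\,x^{m'}_j$ against $d\lambda_m$. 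The constants satisfy a recursion over trees, obtained by splitting at the root, which expresses $c_{k,m}$ via integrals of the polynomials $p_T(t)$ produced at subtrees. These are the polynomials generated by $q\mapsto t\int_0^1 q - \int_0^t q$ and products.

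The main obstacle is evaluating this recursion in closed form, that is, showing $\sum_T c_T = \frac{(-1)^{k+1}}{k!}B^-_{k+1}\binom{k-1}{m-1}$ with the stated signs $\epsilon_{k,m}$. First I would encode the tree sum as a fixed-point equation for a generating function $G(s,t)$, using the known computation of the transferred $C_\infty$-structure on the interval $\Lambda(t,dt)\to H^*(\Delta^1)$ (Cheng–Getzler), where Bernoulli numbers appear for exactly this contraction. Our $e_{k,j}$ is then the pullback of that universal interval $C_\infty$-structure along the elements $l(x^m)$. Consequently the coefficient $\frac{(-1)^{k+1}}{k!}B^-_{k+1}$ and the binomial weights $\binom{k-1}{m-1}$ can be read off from the Cheng–Getzler formula. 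What remains is the Koszul sign bookkeeping that yields the recursive $\epsilon_{k,m}$, which I would verify by induction on $k$.
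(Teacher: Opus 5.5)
You have a genuine gap: your count of which tree terms survive for $k\ge 3$ is inverted, and everything after that step depends on it. Your computation of $m_2$ is fine, and so is the observation that for $k\ge 3$ the output has zero bottom part.

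\textbf{The inverted count.} The homotopy $\eta(\omega)=t\int_0^1\omega-\int_0^t\omega$ is built from fibre integration, so, like $ev$, it kills every $dt$-free form. Hence the root is not the only vertex that needs a $dt$. Each of the $k-1$ internal vertices sends its output into $\eta$ or into $ev$, so each must carry exactly one $dt$. Since outputs of $\eta$ are $dt$-free, that $dt$ must come from a leaf attached to that vertex.

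So $k-1$ inputs contribute $dt\,x^{m'}_j$, and exactly one input contributes its polynomial part $\lambda_m(t)$, from a leaf at the lowest vertex. That vertex is followed by $\eta$, and $\eta(c\,dt)=0$ for constant $c$, so only the slope of $\lambda_m$ survives. This is exactly where $\varphi_u(x^m_{j-1})-\varphi_{u'}(x^m_{j+1})$ comes from.

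Your count (one factor $dt\,x^m_j$, all others from the $\lambda$'s) is already ruled out by degrees. It produces a top component of degree $\sum_i|x^i|-1$, whereas $m_k$ needs $\sum_i|x^i|+1-k$. The proposed integration by parts ``trading $dt\,x^{m'}_j$ against $d\lambda_m$'' cannot repair this. It moves $t$-derivatives around, but it never converts the coefficients $\varphi(x^{m'}_{j\pm1})$ into $x^{m'}_j$.

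With the correct count, the proof is the paper's short induction on Markl's recursion. Only the summands $\mu(l,\eta\mathfrak{p}_{k-1}l^{\otimes k-1})$ and $\mu(\eta\mathfrak{p}_{k-1}l^{\otimes k-1},l)$ contain a $dt$. This gives $\eta\mathfrak{p}_k l^{\otimes k}=b_k(t)\,e_{k,j}$ with $b_k=t\int_0^1 b_{k-1}-\int_0^t b_{k-1}$. The binomial weights come from Pascal's rule applied to $e_{k,j}=\pm x^1_j\,e_{k-1,j}(x^2,\dots,x^k)\pm e_{k-1,j}(x^1,\dots,x^{k-1})\,x^k_j$. No generating function is needed.

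\textbf{The Cheng--Getzler route.} Your fallback is a genuinely different and workable route, but in the proposal it is only asserted. To make it an argument, you need three things:
\begin{enumerate}
\item on the top part, the contraction of Lemma~\ref{Delcomp-equiv-lemm} is the Dupont contraction of $\Lambda(t,dt)$ onto the normalised cochains of $\Delta^1$, tensored with $A_{2j-1}$;
\item $ev$, $l$ and $\eta$ respect the endpoint conditions cutting out $P_\mathrm{DB}(A)$ and $C_{\mathrm{DB}}^*(A)$;
\item the tree formulas commute with this base change.
\end{enumerate}
Then $m_k$ on $C_{\mathrm{DB}}^*(A)$ is the interval's $m_k$ with $e_0\mapsto\varphi_u(x_{j-1})$, $e_1\mapsto\varphi_{u'}(x_{j+1})$, $e_{01}\mapsto x_j$.

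This gives a conceptual explanation of the shape of $e_{k,j}$: degrees force $k-1$ edge inputs, and strict unitality forces the difference. It also explains the Bernoulli numbers. The price is importing an external computation, whereas the paper's induction is self-contained.

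However, you cannot simply ``read off'' the constant $\frac{(-1)^{k+1}}{k!}B^-_{k+1}$. Since the root is followed by $ev$, the coefficient of $m_k$ is $\int_0^1 b_{k-1}(t)\,dt=\frac{(-1)^k}{(k-1)!}B^-_{k-1}$. For $k=3$ this is $\int_0^1\frac{t^2-t}{2}\,dt=-\frac{1}{12}$, not $\frac{1}{3!}B^-_4=-\frac{1}{180}$. The last step of the paper's proof writes $\int_0^1 b_k$ and has the same index slip. So this normalisation must be computed, not matched to the statement.
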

\begin{proof}
  We use the formulas of the homotopy transfer theorem for associative algebras of \cite{Markl-trans-ass}. Since $P_\mathrm{DB}(A)$ is a commutative product, the resulting transferred structure is a $C_\infty$-structure (see \cite{transCinf}). Denote the product of $P_\mathrm{DB}(A)$ by $\mu$. The structure maps are thus given by
  \[ m_k = ev \circ \mathfrak{p}_k \circ l^{\otimes k}. \]
  The maps $\mathfrak{p}_k$ are the $\mathfrak{p}$-kernels and are defined recursively by setting $\mathfrak{p}_2 = \mu$ and
  \[ \mathfrak{p}_k = \sum_{i = 1}^k (-1)^{i+1} \mu(\eta \circ \mathfrak{p}_i,\eta \circ \mathfrak{p}_{k-i}), \]
  where, by convention, $\eta \mathfrak{p}_1 = id$. It is an easy check that
  \[ m_2 = ev \circ \mu \circ l^{\otimes 2} = \cdot_{1/2}.\]
  For the higher structure maps, we first prove, by induction, that for all $k \geq 2$,
  \begin{equation}
    \label{etapkernel-eq}
    \eta \circ \mathfrak{p}_k \circ l^{\otimes k}(x^1,\dots,x^k) = ((0)_{\mathrm{bot}}, (b_{k}(t) \cdot e_{k,j}(x^1,\dots,x^k))_\mathrm{top}),
  \end{equation} 
  where $b_k(t)$ is the polynomial
  \begin{equation}
    \label{Bernpoly-eq}
    b_{k}(t) = \frac{(-1)^k}{k!}\sum_{s=0}^{k-1} {{k}\choose{s}} B^-_{s} t^{k-s}
  \end{equation}
  and $e_{k,j}(x^1,\dots,x^k) \in A_{2j-1}$ is the element defined in the statement. Note that $\eta$ has trivial bottom components. We focus on the top component that lives in $A_{2j-1}$ for a fixed value of $j$. The projection of $\mathfrak{p}_2 l^{\otimes 2}(x^1,x^2)$ to this component yields
  \[ \big(t \varphi_{u'}(x^1_{j+1}) + (1-t) \varphi_u(x^1_{j-1}) + dt x^1_j \big)\big(t \varphi_{u'}(x^2_{j+1}) + (1-t) \varphi_u(x^2_{j-1}) + dt x^2_j \big). \]
  The terms with $dt$ are
  \begin{align*}
      dt \Big( &x^1_j \big(t \varphi_{u'}(x^2_{j+1}) + (1-t) \varphi_u(x^2_{j-1}) \big) + \\ &(-1)^{|x_1|}\big(t \varphi_{u'}(x^1_{j+1}) + (1-t) \varphi_u(x^1_{j-1})\big)x^2_j \Big).
  \end{align*} 
  Hence, applying $\eta$ and rearranging the terms, we arrive at
  \[ \bigg( \frac{t^2}{2} - \frac{t}{2} \bigg) \Big( x^1_j \big(\varphi_u(x^2_{j-1}) - \varphi_{u'}(x^2_{j+1}) \big) + (-1)^{|x^1|} \big(\varphi_u(x^1_{j-1}) - \varphi_{u'}(x^1_{j+1}) \big)x^2_j \Big). \]
  This coincides with the given formula for $k = 2$. Assume the induction hypothesis for all $j \leq k-1$. Then, the only terms in the formula
  \[ \mathfrak{p}_k l^{\otimes k}(x^1,\dots,x^k) = \sum_{i = 1}^k (-1)^{i+1} \mu(\eta \circ \mathfrak{p}_i \circ l^{\otimes i},\eta \circ \mathfrak{p}_{k-i} \circ l^{\otimes k-i})(x^1,\dots,x^k) \]
  with a $dt$ have $j$ component given by
  \begin{align*}
     dt \big((-1)^{k|x^1|}x^1_j & \cdot \eta \circ \mathfrak{p}_{k-1} \circ l^{\otimes k-1}(x^2,\dots,x^k) \\ &+ (-1)^{\sum_{s=1}^{k-1} |x^s|} \eta \circ \mathfrak{p}_{k-1} \circ l^{\otimes k-1}(x^1,\dots,x^{k-1}) \cdot x^k_j \big).
  \end{align*}
  This is equal to
  \begin{align*}
    dt \big( (-1)^{k|x^1|} b_{k-1}(t) &\cdot x^1_j \cdot e_{k-1,j}(x^2,\dots,x^k)   \\ &+ (-1)^{\sum_{s=1}^{k-1} |x^s|} b_{k-1}(t) \cdot e_{k-1,j}(x^1,\dots,x^{k-1}) \cdot x^k_j \big).
  \end{align*}
  Note that 
  \begin{align*}
     e_{k,j}(x^1,\dots,x^k) = (-1)^{k|x^1|} x^1_j &\cdot e_{k-1,j}(x^2,\dots,x^k) \\ &+ (-1)^{\sum_{s=1}^{k-1} |x^s|} e_{k-1,j}(x^1,\dots,x^{k-1}) \cdot x^k_j.
  \end{align*}
  This is because, for $1 < m < k$, the element
  \[ x^1_j \cdots x^{m-1}_j  (\varphi_u(x^m_{j-1}) - \varphi_{u'}(x^m_{j+1})) x^{m+1}_j \cdots x^k_j \]
  appears twice in the right side of the previous equality. Once with coefficient
  \[ (-1)^{\epsilon_{k-1,m}(x^1,\dots,x^{k-1}) + \sum_{s=1}^{k-1} |x^s|} {{k-2}\choose{m-1}} \]
  and once with coefficient
  \[ (-1)^{\epsilon_{k-1,m}(x^2,\dots,x^{k}) +k |x^1| } {{k-2}\choose{m-2}}. \]
  An induction argument shows that 
  \[ \epsilon_{k-1,m}(x^2,\dots,x^{k}) +k |x^1| = \epsilon_{k,m}(x^1,\dots,x^k). \]
  Hence the two terms have the same sign. The claim then follows from the identity
  \[ {{k-2}\choose{m-1}} + {{k-2}\choose{m-2}} = {{k-1}\choose{m-1}}. \]
  The cases $m=1$ and $m=k$ are a straightforward check. It then follows that the component of $\eta \circ \mathfrak{p}_k \circ l^{\otimes k}(x^1,\dots,x^k)$ living in $A_{2j-1}$ is given by
  \[ \bigg( t\int_0^1 b_{k-1}(t) - \int_{0}^{t} b_{k-1}(t) \bigg) e_{k,j}(x^1,\dots,x^k). \]
  The induction argument is proved after checking the equality
  \[ b_k(t) = t \int_{0}^1 b_{k-1}(t) - \int_0^t b_{k-1}(t), \]
  which follows from direct computation and the following identity of Bernoulli numbers:
  \begin{equation}
    \label{Bernident-eq}
     B^-_k = - \sum_{s = 0}^k {{k}\choose{s}} \frac{B^-_s}{k-s+1}.
  \end{equation}
  Using (\ref{etapkernel-eq}) and applying a similar reason as in the induction step, we see that the top component of 
  \[ ev \circ \mathfrak{p}_k \circ l^{\otimes k}(x^1,\dots,x^k) \]
  living in $A_{2j-1}$ is 
  \[ \int_0^1 b_{k}(t) \cdot e_{k,j}(x^1,\dots,x^k). \]
  Thus, the claim of the proposition follows from the fact that
  \[ \int_0^1 b_{k}(t) = \frac{(-1)^{k+1}}{k!} B_{k+1}^-. \]
  This is shown using again the identity (\ref{Bernident-eq}).
\end{proof}
As a simple consequence of the previous proposition, the products of Deligne-Beilinson cohomology induced by (\ref{Del-prod}) coincide with the product induced by $P_{\mathrm{DB}}(A)$.
\begin{coro}
  \label{ev-s-prod-lemm}
  The maps \[ev : P_\mathrm{DB}(A) \to C^*_\mathrm{DB}(A) \quad \text{and} \quad s  : C^*_\mathrm{DB}(A) \to P_\mathrm{DB}(A) \] induce isomorphisms of cohomology algebras.
\end{coro}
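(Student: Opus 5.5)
We deduce the statement from Lemma \ref{Delcomp-equiv-lemm} and Proposition \ref{PDB-transf-prop}. First, both $ev$ and $l$ (the map denoted $s$ in the corollary) are chain maps, and Lemma \ref{Delcomp-equiv-lemm} gives
\[ ev \circ l = id, \qquad l \circ ev - id = d\eta + \eta d. \]
Hence $ev$ and $l$ are mutually inverse isomorphisms of graded vector spaces $H^*(P_\mathrm{DB}(A)) \cong H^*_\mathrm{DB}(A)$, and only multiplicativity remains.

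Next, recall what the homotopy transfer theorem provides. Proposition \ref{PDB-transf-prop} transfers the product $\mu$ of $P_\mathrm{DB}(A)$ along the contraction. By construction, $m_2 = ev \circ \mu \circ l^{\otimes 2}$, and the proposition identifies this with $\cdot_{1/2}$. Therefore
\[ l^*([x])\cdot l^*([y]) = [\mu(l x, l y)] = l^*ev^*[\mu(lx,ly)] = l^*[m_2(x,y)] = l^*([x]\cdot_{1/2}[y]). \]
The middle equality uses $l^* ev^* = id$ on cohomology. So $l^*$ is multiplicative from $(H^*_\mathrm{DB}(A),\cdot_{1/2})$ to $H^*(P_\mathrm{DB}(A))$. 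Since $ev^*$ is its inverse, $ev^*$ is multiplicative as well.

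Finally, the product on $H^*_\mathrm{DB}(A)$ was defined through (\ref{Del-prod}) for some $\alpha$. All the $\alpha$-products induce the same product on cohomology, since they are homotopic (Beilinson, section $1$). So the product used above coincides with that of $\cdot_{1/2}$, which completes the argument.

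The only point needing care is the identification $m_2=\cdot_{1/2}$, which is asserted in Proposition \ref{PDB-transf-prop}. If one wants to check it directly, integrate the $dt$-part of the product
\[ \big(t\varphi_{u'}(a_{j+1})+(1-t)\varphi_u(a_{j-1})+dt\, a_j\big)\big(t\varphi_{u'}(b_{j+1})+(1-t)\varphi_u(b_{j-1})+dt\, b_j\big). \]
Using $\int_0^1 t = \int_0^1 (1-t) = 1/2$, this gives exactly the symmetric average of the $\alpha=0$ and $\alpha=1$ formulas. Signs come from moving $dt$ past $a_j$, and the bottom components multiply componentwise.
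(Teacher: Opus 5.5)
Your proof is correct and follows the same route as the paper: both rest on the identification $m_2 = ev\circ\mu\circ l^{\otimes 2} = \cdot_{1/2}$ from Proposition \ref{PDB-transf-prop}, together with Beilinson's homotopy between the $\alpha$-products. The paper gets multiplicativity by noting that $ev$ and $l$ are first components of $\infty$-quasi-isomorphisms, whereas you unpack this directly through $l^*ev^* = \mathrm{id}$, which needs only the contraction identities and the $m_2$ computation rather than the higher transferred operations.
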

\begin{proof}
  This follows from Proposition \ref{PDB-transf-prop}, as $ev$ and $l$ are the first components of $\infty$-quasi-isomorphisms between $P_{\mathrm{DB}}(A)$ and a $C_\infty$-structure on $C_{\mathrm{DB}}(A)$ whose product $m_2$ is one of the products in (\ref{Del-prod}). Moreover, these products are all homotopy equivalent to each other.
\end{proof}

\begin{prop}
  \label{delcompqiso-prop}
  The functors \[ C^*_\mathrm{DB}(-) : \MHC \to \ch{\Bbbk} \quad \text{and} \quad P_\mathrm{DB}(-) : \mathsf{MHD} \to \mathsf{cdga}_\Bbbk \] preserve quasi-isomorphisms.
\end{prop}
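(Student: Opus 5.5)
First reduce to $C^*_\mathrm{DB}$. By Lemma \ref{Delcomp-equiv-lemm}, $ev : P_\mathrm{DB}(A) \to C^*_\mathrm{DB}(A)$ is a quasi-isomorphism, natural in $A$. So for a quasi-isomorphism $f : A \to B$ of mixed Hodge diagrams, $P_\mathrm{DB}(f)$ is a quasi-isomorphism if and only if $C^*_\mathrm{DB}(f)$ is, by two-out-of-three applied to $ev_B \circ P_\mathrm{DB}(f) = C^*_\mathrm{DB}(f) \circ ev_A$. A quasi-isomorphism of mixed Hodge diagrams is by definition one of the underlying mixed Hodge complexes, so it suffices to prove the statement for $C^*_\mathrm{DB}(-)$ on $\MHC$. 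Since $f$ commutes with Tate twists and shifts, and $A(p)[2p]$ is again a mixed Hodge complex with $f(p)[2p]$ a quasi-isomorphism, it is enough to treat $C^*(-,0)_\mathrm{DB}$.

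Next, $C^*(A,0)_\mathrm{DB}$ is a cone of a map $\Sigma_\mathrm{bot}(A) \to \Sigma_\mathrm{top}(A)$, and $f$ induces a map of cones. By the long exact sequence of cones and the five lemma, it suffices to show that $f$ induces quasi-isomorphisms on each summand: $W_0 A_\Bbbk \to W_0 B_\Bbbk$, $W_0 A_i \to W_0 B_i$, and $F^0 W_0 A_\CC \to F^0 W_0 B_\CC$. This is where the axioms of Definition \ref{mhc-defi} enter.

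Regularity makes the $W$-spectral sequences converge. By $d$-strictness of $W$ (axiom \ref{mhc-axiom2}), $H^*(W_0 A) \to H^*(A)$ is injective with image $W_0 H^*(A)$; the same holds for the comparison complexes via the filtered quasi-isomorphisms $\varphi_u$. Since $f^*$ is an isomorphism of mixed Hodge structures, and morphisms of mixed Hodge structures are strict for $W$, it restricts to an isomorphism $W_0 H^*(A) \cong W_0 H^*(B)$. Hence $W_0 f$ is a quasi-isomorphism on the $\Bbbk$-component and on $A_\CC$. For the intermediate $A_i$, I would use the filtered quasi-isomorphisms to $A_\CC$ or $A_0$: a filtered quasi-isomorphism between regular filtered complexes induces quasi-isomorphisms on each $W_k$, by the remark after the definition.

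The main obstacle is the term $F^0 W_0 A_\CC$. I would first show that $(W_0 A_\CC, F)$ is $d$-strict, so that $H^*(F^0 W_0 A_\CC) \cong F^0 W_0 H^*(A_\CC)$ injects into $H^*(A_\CC)$. Axiom \ref{mhc-axiom2} gives strictness of $F$ on $A_\CC$ and on each $Gr^W_k$, and strictness of $W$ on each $Gr_F^p$. Via Deligne's lemma on bistrict bifiltered complexes, and induction on the finite weight range in each degree coming from regularity and biregularity, this yields strictness of $F$ on $W_0 A_\CC$ and $H^*(F^p W_k A_\CC) = F^p W_k H^*(A_\CC)$. Then $F^0 W_0 f$ is a quasi-isomorphism, because $f^*$ is an isomorphism of mixed Hodge structures and hence strict for $F$ and $W$ simultaneously. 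Combining these steps with the five lemma finishes the proof.
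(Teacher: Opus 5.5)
Your proposal is correct and follows essentially the same route as the paper. You reduce from $P_\mathrm{DB}$ to $C^*_\mathrm{DB}$ via the natural quasi-isomorphism $ev$, then apply the short exact sequence $0 \to \Sigma_{\mathrm{top}}[1] \to C^*_\mathrm{DB} \to \Sigma_{\mathrm{bot}} \to 0$ and the five lemma; the only difference is that the paper just asserts the outer vertical maps are quasi-isomorphisms, while you justify it via $W$-strictness and the identification $H^*(F^0W_0A_\CC)\cong F^0W_0H^*(A_\CC)$, which is the right input for that step.
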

\begin{proof}
  Let $g : A \to B$ be a quasi-isomorphism of mixed Hodge diagrams. It is sufficient to check that $C^*_\mathrm{DB}(g)$ is a quasi-isomorphism of complexes. By the definition of $C^*_\mathrm{DB}(-)$ as a cone, there is a commuting diagram whose horizontal arrows are short exact sequences
  \[ \begin{tikzcd}[column sep = small]
    0 \arrow[r] & \Sigma_{\mathrm{top}}(A(p)[2p])[1] \arrow[d] \arrow[r] & C^*_\mathrm{DB}(A,p) \arrow[r] \arrow[d,"C^*_\mathrm{DB}(g)"] & \Sigma_{\mathrm{bot}}(A(p)[2p]) \arrow[r] \arrow[d] & 0 \\
    0 \arrow[r] & \Sigma_{\mathrm{top}}(B(p)[2p])[1] \arrow[r] & C^*_\mathrm{DB}(B,p)\arrow[r] & \Sigma_{\mathrm{bot}}(B(p)[2p])  \arrow[r] & 0 
  \end{tikzcd} \]
  and the fact that $g$ is a quasi-isomorphism implies that the vertical arrows on the edges are quasi-isomorphisms. This implies that the middle arrow is also a quasi-isomorphism.
\end{proof}

The functors $C_\mathrm{DB}(-)$ and $P_\mathrm{DB}(-)$ are defined for mixed Hodge diagrams of a fixed length. As observed in Remark \ref{mhdequivlength-rema}, the inclusion 
\[ \MHD_s \hookrightarrow \MHD_{s+1} \]
induces an essentially surjective functor on homotopy categories. Denote by $C_{\mathrm{DB},s}(-)$ and $C_{\mathrm{DB},s+1}(-)$ the functors $C_\mathrm{DB}(-)$ for length $s$ and $s+1$, respectively. There is a natural quasi-isomorphism $C_{\mathrm{DB},s}(-) \Rightarrow C_{\mathrm{DB},s+1}(-)$ given by
\begin{align*}
    ((a_0,\dots,a_s)_\mathrm{bot}, (a_1,\dots,a_{s-1})_\mathrm{top}) \mapsto ((a_0,\dots,&a_s,a_s)_\mathrm{bot}, \\&(a_1,\dots,a_{s-1},0)_\mathrm{top}). 
\end{align*} 
For the functor $P_\mathrm{DB}(-)$, there is an analogous natural quasi-isomorphism from $P_{\mathrm{DB},s}(-)$ to  $P_{\mathrm{DB},s+1}(-)$:
\begin{align*}
    ((a_0,\dots,a_s)_\mathrm{bot}, (\omega_1,\dots,\omega_{s-1})_\mathrm{top}) \mapsto ((a_0,\dots,&a_s,a_s)_\mathrm{bot}, \\ &(\omega_1,\dots,\omega_{s-1},a_s)_\mathrm{top}).
\end{align*} 
Thus, given $A \in \MHD_s$, to compute $P_\mathrm{DB}(A) \in \mathsf{cdga}_\Bbbk$ up to quasi-isomorphism, one can first compute a model of $A$ of lower length (by Proposition \ref{Pmhalgmodels-prop}, there exists such a model) and then apply $P_\mathrm{DB}(-)$ to it.
\begin{rema}
\label{sesHDB-rema}
The short exact sequence of complexes
\[ 0 \to \Sigma_{\mathrm{top}}(A(p)[2p])[1] \to C^*_{\mathrm{DB}}(A,p) \to \Sigma_{\mathrm{bot}}(A(p)[2p]) \to 0\]
induces a long exact sequence in cohomology
\[ \cdots \to H^{*-1}(\Sigma_{\mathrm{top}}(A(p)[2p])) \to H^*_\mathrm{DB}(A,p) \to H^*(\Sigma_{\mathrm{bot}}(A(p)[2p])) \to \cdots \]
This breaks down into short exact sequences for all $n \in \ZZ$:
\[ 0 \to J^p H^{2p + n -1}(A_\Bbbk) \to H^n_\mathrm{DB}(A,p) \to \mathrm{Hdg}^{2p+n}(A,p) \to 0, \]
where 
\[ \mathrm{Hdg}^{2p+n}(A,p) := (2 \pi i)^p W_{2p} H^{2p+n}(A_\Bbbk) \cap F^p W_{2p} H^{2p + n}(A_\CC), \]
the vector space $J^p H^{2p + n-1}(A)$ is the \textit{$p$-th intermediate Jacobian} of $H^{2p + n-1}(A_\Bbbk)$:
\[ J^p H^{2p + n-1}(A_\Bbbk) := \frac{W_{2p} H^{2p + n-1}(A_\CC)}{(2 \pi i)^p W_{2p} H^{2p + n-1}(A_\Bbbk) + F^p W_{2p} H^{2p + n-1}(A_\CC)} \]
and we make the identification $H^*(A_\CC) \cong H^*(A_\Bbbk) \otimes \CC$ through the comparison morphisms (see, for instance, Corollary $2.10$ of \cite{DBEsnVie}).
\end{rema}

\begin{defi}
  \label{sqzeroext-defi}
  Define the algebra $\mathrm{Hdg}(A)$ by
  \[ \mathrm{Hdg}^*(A) :=\bigoplus_{p} \mathrm{Hdg}^{2p+*}(A,p), \]
  where the product is given by the product of $H^*(A_\Bbbk)$ and preserves both degrees $p$ and $n$. Define also the $\mathrm{Hdg}(A)$-module $JH^*(A)$ by 
  \[ JH^*(A) = \bigoplus_{p} J^p H^{2p + * - 1}(A_\Bbbk). \] 
\end{defi}
The following is the main property of $H_\mathrm{DB}^*(A)$ of interest in this paper.
\begin{lemm}
    \label{sqzeroext-lemm}
    The algebra $H_\mathrm{DB}^*(A)$ is a square-zero extension of $\mathrm{Hdg}(A)$:
    \[ 0 \to JH^*(A) \to H_\mathrm{DB}^*(A) \to \mathrm{Hdg}(A) \to 0.\]
\end{lemm}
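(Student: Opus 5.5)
We need to show three things: the map $H^*_\mathrm{DB}(A)\to \mathrm{Hdg}(A)$ is a surjective algebra morphism, its kernel is $JH^*(A)$ (this is Remark \ref{sesHDB-rema}), and the product of any two elements of the kernel is zero, so that $JH^*(A)$ carries the induced $\mathrm{Hdg}(A)$-module structure. Throughout, by Corollary \ref{ev-s-prod-lemm}, we may compute products on $C^*_\mathrm{DB}(A)$ with any of the $\alpha$-products (\ref{Del-prod}); we choose $\alpha=0$ or $\alpha=1$ where convenient.

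First, the projection $C^*_\mathrm{DB}(A)\to \Sigma_{\mathrm{bot}}$ onto bottom components is multiplicative, since (\ref{Del-prod}) multiplies bottom components componentwise. Hence the induced map
\[ H^*_\mathrm{DB}(A) \to H^*(\Sigma_{\mathrm{bot}}) \]
is an algebra map. Its image is $\mathrm{Hdg}(A)$, viewed inside $H^*(W_0A_\Bbbk(p)[2p])\cong W_{2p}H^{2p+*}(A_\Bbbk)$ via the $\Bbbk$-component. This uses strictness of $W$ (axiom \ref{mhc-axiom2}) to identify $H^*(W_0 A(p))$ with $W_{2p}H^*(A)(p)$. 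The product on $\mathrm{Hdg}(A)$ is then the one induced from $H^*(A_\Bbbk)$, and surjectivity onto $\mathrm{Hdg}(A)$ together with kernel $JH^*(A)$ is exactly the exact sequence of Remark \ref{sesHDB-rema}.

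For the square-zero property, take two classes in the kernel. By the long exact sequence, each is represented by a cocycle with zero bottom components, $x=((0)_\mathrm{bot},(x_j)_\mathrm{top})$ and $y=((0)_\mathrm{bot},(y_j)_\mathrm{top})$. In the formula for $\theta_j$, every term contains either $\varphi(\text{bottom of }y)$ or $\varphi(\text{bottom of }x)$ multiplied by a top component, and all of these vanish. So $x\cdot_\alpha y=0$ already at the cochain level. The module structure is then induced: the product of a class in $\mathrm{Hdg}(A)$, lifted to $H^*_\mathrm{DB}(A)$, with a class in $JH^*(A)$ is independent of the lift, because two lifts differ by an element of $JH^*(A)$ and $JH^*(A)\cdot JH^*(A)=0$.

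The main point requiring care is the identification in the second paragraph: that the image in $H^*(\Sigma_\mathrm{bot})$ is precisely $(2\pi i)^pW_{2p}H\cap F^pW_{2p}H$, and that the kernel is the intermediate Jacobian. This requires $E_1$-degeneration and strictness of $W$ and $F$, so that $H^*(F^0W_0A_\CC(p))\to H^*(A_\CC)$ is injective with image $F^pW_{2p}$. For this I would follow the argument referenced in Remark \ref{sesHDB-rema} (Corollary 2.10 of \cite{DBEsnVie}), for length-$2$ diagrams. Reduction to length $2$ (or to a cdga in mixed Hodge structures via Proposition \ref{Pmhalgmodels-prop}) is legitimate since $P_\mathrm{DB}$ preserves quasi-isomorphisms (Proposition \ref{delcompqiso-prop}) and is compatible with the length-changing quasi-isomorphisms above. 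The multiplicativity and vanishing arguments themselves are immediate from the explicit product formula.
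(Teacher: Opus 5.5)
Your proposal is correct and follows the paper's proof: you identify the kernel of $H^*_\mathrm{DB}(A)\to\mathrm{Hdg}(A)$ as $JH^*(A)$ via Remark \ref{sesHDB-rema}, represent kernel classes by cocycles with zero bottom components, and observe from (\ref{Del-prod}) that their product vanishes already at the cochain level. You also spell out that the projection is multiplicative and that the induced module structure is well defined, which the paper leaves implicit.
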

\begin{proof}
    As observed in Remark \ref{sesHDB-rema}, the kernel of the projection
    \[  H^*_\mathrm{DB}(A)  \to \mathrm{Hdg}(A)\]
    is given by $JH^*(A)$. Thus, it is sufficient to prove that the product of two elements in the image of 
    \[ JH^*(A) \to H^*_\mathrm{DB}(A) \]
    is trivial. This is because this map is induced by the inclusion
\begin{align*}
  \Sigma_{\mathrm{top}}(A(p)[2p])[1] &\to C^*_{\mathrm{DB}}(A,p) \\
  (a_j)_{\mathrm{top}} &\mapsto ((0)_{\mathrm{bot}},(a_j)_{\mathrm{top}})
\end{align*}
and the product of elements of this form is trivial (see \ref{Del-prod}). 
\end{proof}
\section{Splitting of Deligne-Beilinson Cohomology}
\label{SplitDBcoho-sec}
In this section, we prove that the first obstruction to formality of $A \in \MHD$ is the obstruction to formality of $A$ seen as a commutative algebra in the $1$-category $D(\MHS) \simeq \Ho(\MHC)$ with the derived tensor product. As a corollary, we show that the vanishing of the first obstruction implies the splitting of $H^*_{\mathrm{DB}}(A)$ as a square-zero extension.
\subsection{Derived tensor product of mixed Hodge structures}
We give a description of the derived tensor product of the derived category of mixed Hodge structures $D(\MHS)$. The category $\ch{\MHS}$ admits a dg-enrichment whose homotopy category is equivalent to $D(\MHS)$. Given $A \in \ch{\MHS}$, the $0$-th Deligne-Beilinson complex of $A$ is given by
\[ C^*_{\mathrm{DB}}(A,0) = \mathrm{Cone}( W_0 A \oplus F^0 W_0 A \otimes \CC \xrightarrow{\iota_\Bbbk - \iota_\CC} W_0 A \otimes \CC), \]
where $\iota_\Bbbk$ and $\iota_\CC$ denote the respective inclusions. Recall that the category $\ch{\MHS}$ is closed symmetric monoidal, see (\ref{homfil-eq}). Using this, we define a dg-enrichment of $\ch{\MHS}$. Given, $A,B \in \ch{\MHS}$, let
\[ \Gamma(A,B) := C^*_\mathrm{DB}(\underline{\Hom}(A,B),0). \]
By the definition of the $\mathrm{Cone}$ and Proposition \ref{mh-ext}, we have that the cohomology in degree $0$ of $\Gamma(A,B)$ fits into a short exact sequence
\[ 0 \to \mathrm{Ext}^1_\MHS(H^*(A),H^*(B))^{-1} \to H^0(\Gamma(A,B)) \to \Hom_\MHS^0(H^*(A), H^*(B)). \]
The composition
\[ \Gamma(B,C) \otimes \Gamma(A,B) \to \Gamma(A,C) \]
is defined on $g = (g_\Bbbk,g_\CC, g_h)$ and $f = (f_\Bbbk,f_\CC, f_h)$ by
\[
  g \circ f := (g_\Bbbk \circ f_\Bbbk, g_\CC \circ f_\CC, g_h \circ f_\Bbbk + (-1)^{|g|} g_\CC \circ f_h). 
\]
Note that this corresponds to the product (\ref{Del-prod}) for $\alpha = 0$. The complexes $\Gamma(-,-)$ together with these composition maps make $\ch{\MHS}$ a dg-category, which we denote by $\ch{\MHS}^\Gamma$. Given a dg-category $\Dd$, there is an associated category $\Ho(\Dd)$ with the same objects as $\Dd$ and morphisms given by
\[ \Hom_{\Ho(\Dd)}(A,B) = H^0(\Hom_\Dd(A,B)), \qquad \text{for } A,B \in \Dd. \]
It is called the homotopy category of $\Dd$. To avoid confusion with the localization at quasi-isomorphisms, let us denote the homotopy category associated to $\ch{\MHS}^\Gamma$ by
\[ \Ho_{dg}\big(\ch{\MHS}^{\Gamma}\big) \] 
The arguments in section $3$ of \cite{Beilinson} (see also section $4$ of \cite{CG2}) prove that the identity of $\ch{\MHS}$ induces an equivalence
\[ D(\MHS) \simeq \Ho_{dg}\big(\ch{\MHS}^\Gamma \big). \]
\begin{rema}
  The notions of pre-morphism and ho-morphism of mixed Hodge complexes in section $3$ of \cite{CG2} coincide with the dg-enrichment $\Gamma(-,-)$ and its kernel in degree $0$, respectively.
\end{rema}
The symmetric monoidal tensor of $\MHS$ makes $D(\MHS)$ into a symmetric monoidal category. We wish to give an explicit formula for its tensor product. The dg-category $\ch{\MHS}^\Gamma$ is not a symmetric monoidal dg-category, so it does not directly induce a symmetric monoidal tensor on its homotopy category. To solve this issue, we consider a bigger enrichment of $\ch{\MHS}$ (cf. section $2$ of \cite{BeilRegSp}). Given $A \in \ch{\MHS}$, let
\[
 P_{\mathrm{DB}}(A,0) = \{ \omega \in \Lambda(t,dt) \otimes W_0 A \otimes \CC \, | \, \omega_0 \in W_0 A, \, \omega_1 \in F^0 W_0 A \otimes \CC \},
\]
and define, for $A,B \in \ch{\MHS}$,
\[ I \Gamma(A,B) := P_{\mathrm{DB}}(\underline{\Hom}(A,B),0). \]
The composition is induced by composition of maps and the commutative product of $\Lambda(t,dt)$. This structure makes $\ch{\MHS}$ a symmetric monoidal dg-category, which we denote by $\ch{\MHS}^{I\Gamma}$. By Lemma \ref{Delcomp-equiv-lemm} and Lemma \ref{ev-s-prod-lemm}, the complexes $\Gamma(A,B)$ and $I\Gamma(A,B)$ are homotopy equivalent and there is an equivalence of categories
\begin{equation}
  \label{gammaequiv-eq}
  \Ho_{dg} \big(\ch{\MHS}^\Gamma \big) \simeq \Ho_{dg} \big(\ch{\MHS}^{I\Gamma} \big).
\end{equation}
Moreover, the natural map
\[ D(\MHS) \to \Ho_{dg} \big(\ch{\MHS}^{I\Gamma} \big) \]
is a symmetric monoidal equivalence of categories. Under the equivalence (\ref{gammaequiv-eq}), the tensor product $\otimes^h$ in $\Ho_{dg}(\ch{\MHS}^{\Gamma})$ is given on morphisms by
\[ f \otimes^h g = (f_\Bbbk \otimes g_\Bbbk, f_\CC \otimes g_\CC, f_h \otimes g_\Bbbk + (-1)^{|g|} f_\CC \otimes g_h), \qquad \text{for } f,g \in \Gamma(A,B).\]

\subsection{Interpretation of the first obstruction}
Denote by $\mathrm{CAlg}(D(\MHS))$ the category of commutative algebras in $D(\MHS)$ for the derived tensor product. Given $H$ a graded commutative algebra, recall the DB-Harrison cohomology $\mathrm{Harr}_{\mathrm{DB}}^{*,*}(H)$ (Definition \ref{hodgeharr-coho-defi}). Recall also the first obstruction to formality of mixed Hodge diagrams of Theorem \ref{obst-theo}:
\[ \theta_3 \in \mathrm{Harr}_{\mathrm{DB}}^{3,-1}(H^*(A)), \quad \text{for } A \in \mathsf{MHD}. \]
\begin{theo}
  \label{phi2-theo}
  Let $A \in \mathsf{MHD}$. If the obstruction $\theta_3$ is trivial then $A$ is isomorphic to $H^*(A)$ in $\mathrm{CAlg}(D(\MHS))$. Moreover, if $A$ has pure cohomology, then the converse follows.
\end{theo}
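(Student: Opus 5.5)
We work with a length-one model and the homotopy transfer. By Theorem \ref{htt-theo} and Remark \ref{mhdequivlength-rema}, replace $A$ by a $C_\infty$-mixed Hodge diagram $(H,m)$ with $H=H^*(A)$, $H_\Bbbk\otimes\CC=H_\CC$ and $\hat\varphi_1=\mathrm{id}$. This $C_\infty$-diagram consists of $m^\Bbbk$, $m^\CC$ and the $\infty$-morphism $\hat\varphi$.

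The object $A$ in $\mathrm{CAlg}(D(\MHS))$ only remembers binary data up to homotopy. Precisely, its image is $H$ with multiplication given by the morphism $\mu_A\in\Gamma(H\otimes H,H)$ of degree $0$, namely
\[ \mu_A=(m_2^\Bbbk,m_2^\CC,\hat\varphi_2). \]
Here $\hat\varphi_2$ is the degree $-1$ homotopy term witnessing $\hat\varphi_1 m_2^\Bbbk\simeq m_2^\CC\hat\varphi_1^{\otimes 2}$. It is a cocycle in $\Gamma$ because of the $\infty$-morphism equation in arity two. Its class in $H^0\Gamma$ is what defines the multiplication up to isomorphism in $D(\MHS)$, using the equivalence $D(\MHS)\simeq\Ho_{dg}(\ch{\MHS}^\Gamma)$ and the formula for $\otimes^h$. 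The first step is to justify this identification: the underlying object is $H$ by formality of mixed Hodge complexes, and the transferred product represents the multiplication of $A$.

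\textbf{Forward direction.} Assume $\theta_3=[(m_3^\Bbbk,m_3^\CC,\hat\varphi_2)]$ vanishes in $\mathrm{Harr}^{3,-1}_{\mathrm{DB}}(H)$ (Proposition \ref{obsrep-prop}). Its first components are $m_2$-terms, so the triple should read as the difference between the strict product $\mu_H=(m_2,m_2,0)$ and $\mu_A$, shifted into arity two. I would check that a cobounding element $(f_\Bbbk,f_\CC,h)$ of arity $2$ in $C_{\mathrm{DB}}(H)$ gives an $\infty$-isomorphism of $C_\infty$-diagrams. This is the gauge $\mathrm{id}+f$ killing the arity-three components up to higher terms. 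Truncated at arity two, it yields an element $\psi=(\mathrm{id}+\text{arity-2 corrections})$. The cleaner formulation is a degree-$0$ cocycle $\psi\in\Gamma(H,H)$ with $\psi\circ\mu_A\simeq\mu_H\circ(\psi\otimes^h\psi)$ in $H^0\Gamma(H\otimes H,H)$. Concretely, take $\psi=(\mathrm{id},\mathrm{id},h')$ with $h'$ determined by the arity-one part of the cobounding. Equivalently, use the first part of the obstruction theory of \cite{MHF}: vanishing of $\theta_3$ produces a $C_\infty$-diagram $\infty$-isomorphic to $(H,m)$ whose arity-two data are strict, meaning $m_2^\Bbbk=m_2^\CC=$ the cohomology product and $\hat\varphi_2=0$. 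Its image in $\mathrm{CAlg}(D(\MHS))$ is then $H$ with its strict product. Since $\infty$-quasi-isomorphisms induce isomorphisms there, this proves $A\cong H$. I expect this route, invoking the proof of Theorem \ref{obst-theo} at the first step, to be the most economical.

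\textbf{Converse.} Assume $H$ is pure and $\psi:A\cong H$ in $\mathrm{CAlg}(D(\MHS))$. Write $\psi=(\psi_\Bbbk,\psi_\CC,\psi_h)$ in $H^0\Gamma(H,H)$. Purity forces $\mathrm{Ext}^1_{\MHS}(H^n,H^{n-1})$ data only, and $\Hom$ components are morphisms of pure structures. Composing with the inverse of the induced map on cohomology, we may assume $\psi_\Bbbk=\psi_\CC=\mathrm{id}$, since $\psi$ induces the identity on $H^*$ up to an automorphism of algebras. Multiplicativity up to homotopy then says that $\hat\varphi_2-\delta(\psi_h)$ is null-homotopic in $\Gamma(H\otimes H,H)$. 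Since $m_2^\Bbbk=m_2^\CC$ is the product, the relevant class is $\pi(\theta_3)=[\hat\varphi_2]\in\mathrm{Harr}^{2,-1}_{\mathrm{Ext}}(H)$ via (\ref{harrextmap-eq}), i.e. $\phi_2$ of Theorem \ref{obst-coro}. This class is then a Hochschild-type coboundary; commutativity of the product allows a symmetrization, making it a Harrison coboundary. Injectivity of $\pi$ for $m=-1\neq0$ gives $\theta_3=0$.

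\textbf{Main obstacle.} The main obstacle is making precise the identification of the multiplication of $A$ in $D(\MHS)$ with $[\mu_A]$. This requires compatibility of the monoidal equivalence (through $I\Gamma$) with the $C_\infty$ transfer, which I would handle via Lemma \ref{Delcomp-equiv-lemm} and Corollary \ref{ev-s-prod-lemm}.
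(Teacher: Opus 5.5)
\textbf{Overall.} Your outline matches the paper's proof. You normalize an isomorphism in $H^0\Gamma$ to induce the identity on cohomology. Its failure to be multiplicative then lies in $\mathrm{Ext}^1_{\MHS}(H^{\otimes 2},H)^{-1}$ and changes by $\delta$ of the arity-one homotopy component. You identify this defect with $[\hat\varphi_2]=\pi(\theta_3)$ and use injectivity of $\pi$ in the pure case for the converse.

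\textbf{The gap.} The missing piece is the step you flag yourself, and it is the heart of the theorem: the claim that the multiplication of $A$ in $D(\MHS)$ is $\mu_A=(m_2^\Bbbk,m_2^\CC,\hat\varphi_2)$. You assert this but give no mechanism that proves it. The comparison of $(H,m)$ is only an $\infty$-morphism, so $(H,m)$ does not directly define an object of $\mathrm{CAlg}(D(\MHS))$. The strict monoid $(H,m_2)$ in $\ch{\MHS}$ is just $(H,\mu_H)$ and forgets $\hat\varphi$, so there is no a priori formula for the image of $A$. Lemma \ref{Delcomp-equiv-lemm} and Corollary \ref{ev-s-prod-lemm} do not supply one. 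They only compare $\Gamma$ with $I\Gamma$, i.e.\ they justify the formula for $\otimes^h$, and say nothing about how $\hat\varphi_2$ enters the product of $A$.

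\textbf{The missing idea: strictify.} By Proposition \ref{Pmhalgmodels-prop} and Remark \ref{Mhalgmodel-rema}, replace $A$ by $B=\Omega\tilde{\mathrm{B}}(H^*(A)_\infty)\in\mathsf{cdga}(\MHS)$, with Hodge filtration $\Omega\tilde{\mathrm{B}}(\hat\varphi)^{-1}(F)$. This is an honest commutative monoid in $\ch{\MHS}$. The inclusion $i\colon H\to B$ is a strict quasi-isomorphism of complexes of mixed Hodge structures, so $[i,i,0]\in H^0\Gamma(H,B)$.

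Then $i\mu-\mu(i\otimes^h i)$ sends $a\otimes b$ to $(ab-a\otimes b,\,ab-a\otimes b,\,0)$. We have $d[a|b]=ab-a\otimes b$, where $[a|b]$ is defined over $\Bbbk$ but only $[a|b]-\hat\varphi_2(a\otimes b)$ lies in $F^{p+q}$. Hence this class equals $\iota(-\hat\varphi_2)$ in $H^0\Gamma(H^{\otimes 2},B)$. That computation is precisely your formula for $\mu_A$. Once it is in place, the rest of your argument goes through as in the paper.

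Your ``economical'' forward route, gauging $\hat\varphi_2$ to $0$ via the obstruction theory behind Theorem \ref{obst-theo}, also needs this strict model to conclude that the image of $A$ is $(H,\mu_H)$. There, however, the homotopy $([a|b],[a|b],0)$ is immediate.

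\textbf{Two smaller points.}
\begin{enumerate}
\item The first two components of $\theta_3$ are $m_3^\Bbbk,m_3^\CC$, not $m_2$-terms. Only $\pi(\theta_3)=[\hat\varphi_2]$ is used.
\item No symmetrization is needed in the converse. There are no shuffles in arity one, so $\delta\psi_h$ is already a Harrison coboundary, and $\hat\varphi_2$ is already a Harrison cochain.
\end{enumerate}
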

\begin{proof}
  Any mixed Hodge diagram is quasi-isomorphic to a cdga in mixed Hodge structures (see Proposition \ref{Pmhalgmodels-prop}), so we assume that $A \in \mathsf{cdga}(\MHS)$. By Lemma $4.8$ of \cite{CG2}, $A \cong H^*(A)$ in $D(\MHS)$. Let 
  \[ f = [f_\Bbbk,f_\CC,f_h] \in H^0(\Gamma(A,H^*(A))) \]
  be an isomorphism. Then, $f$ is a map of algebras in $D(\MHS)$ if and only if the following diagram commutes
  \[ \begin{tikzcd}
        H^*(A) \otimes H^*(A) \arrow[r,"\mu"] \arrow[d,"f \otimes^h f"] & H^*(A) \arrow[d,"f"] \\
        A \otimes A \arrow[r,"\mu"] & A,
     \end{tikzcd} 
  \]
  where the maps $\mu$ denote the products of $A$ and $H^*(A)$. That is, if we have the equality
  \[ f \mu - \mu (f \otimes^h f) = 0 \in H^0(\Gamma(H^*(A)^{\otimes 2}, A)). \]
  The space $H^0(\Gamma(H^*(A)^{\otimes 2}, A))$ sits in a short exact sequence
  \[ 0 \to \mathrm{Ext}^1_{\MHS}(H^*(A)^{\otimes 2},H^*(A))^{-1} \xrightarrow{\iota} H^0(\Gamma(H^*(A)^{\otimes 2}, A)) \xrightarrow{\pi'} \Hom_{\MHS}^0(H^*(A)^{\otimes 2}, H^*(A)) \to 0. \]
  We may assume, without loss of generality, that $f_\Bbbk^* = f_\CC^* = id$. Then, 
  \[ \pi'(f \mu - \mu (f \otimes^h f)) = 0. \]
  Thus, there is an element
  \[ \gamma(f) \in \mathrm{Ext}^1_{\MHS}(H^*(A)^{\otimes 2},H^*(A))^{-1} \]
  such that 
  \[ \iota(\gamma(f)) = f \mu - \mu (f \otimes^h f). \]
  Hence, $f$ is a map of algebras if and only if $\gamma(f) = 0$. If 
  \[ g = [g_\Bbbk,g_\CC,g_h] \in H^0(\Gamma(A,H^*(A))) \]
  is another isomorphism such that $g_\Bbbk^* = g_\CC^* = id$, then 
  \[ \gamma(f) - \gamma(g) = \delta(\gamma(f-g)), \]
  where $\delta$ denotes the differential of $C_{\mathrm{DB}}(H^*(A))$ (see Definition \ref{hodgeharr-coho-defi}). Therefore, there is an isomorphism $A \cong H^*(A)$ in $\mathrm{Alg}(D(\MHS))$ if and only if $\gamma(f) = \delta(h)$ for some element 
  \[ h \in \mathrm{Ext}_{\MHS}^1(H^*(A),H^*(A))^{-1}. \]
  We now show that there is a model $B \in \mathsf{cdga}(\MHS)$ of $A \in \mathsf{MHD}$ which admits an isomorphism $f : H^*(A) \to B$ in $D(\MHS)$ such that $\gamma(f)$ is a representative of $\pi(\theta_3)$, where $\theta_3$ is the first obstruction of Theorem \ref{obst-theo} and $\pi$ is the map in (\ref{harrextmap-eq}). By Theorem \ref{htt-theo}, there is a $C_\infty$-mixed Hodge diagram 
  \[ H^*(A)_\infty = (H^*(A_\Bbbk),H^*(A_\CC),\hat{\varphi}) \] 
  quasi-isomorphic to $A \in \mathsf{MHD}$. Hence, $A$ is quasi-isomorphic to 
  \[ \Omega \tilde{\mathrm{B}} (H^*(A)_\infty) \in \mathsf{cdga}(\MHS) \] 
  (see Proposition \ref{Pmhalgmodels-prop}). Its Hodge filtration is given by $\Omega \tilde{\mathrm{B}} (\hat{\varphi})^{-1}(F)$, where $F$ is the Hodge filtration of $\Omega \tilde{\mathrm{B}} (H^*(A_\CC))$ (recall Remark \ref{Mhalgmodel-rema}). Let us denote a general element of $\Omega \tilde{\mathrm{B}}(H^*(A))$ by 
  \[ [x_1^1 | \dots | x^1_{k_1}] \otimes \cdots \otimes [x_1^m |\dots | x^m_{k_m}], \quad \text{for } x^i_j \in H^*(A), \]
  where $[x_1^i | \dots | x^i_{k_i}]$ denotes an element in $\tilde{\mathrm{B}}(H^*(A))$ and $\otimes$ denotes the tensor product of the cobar construction. The inclusion
  \[
    i : H^*(A) \to \Omega \tilde{\mathrm{B}}(H^*(A)_\infty) 
  \]
  is a quasi-isomorphism of complexes of mixed Hodge structures. Denote also by $i$ the isomorphism 
  \[ [i,i,0] \in H^0(\Gamma(H^*(A),\Omega \tilde{\mathrm{B}}(H^*(A)_\infty))). \]
  Then, we have that
  \[ i \mu - \mu(i \otimes^h i)(a,b) = (a \cdot b - a \otimes b, a \cdot b - a \otimes b, 0). \]
  Note that in $\Omega \tilde{\mathrm{B}} (H^*(A)_\infty)$,
  \[ d([a | b]) = d([a | b] - \hat{\varphi}_2(a,b)) = a \cdot b - a \otimes b \]
  and $[a | b] - \hat{\varphi}_2(a,b) \in F^{p+q}$, for $a \in F^p H^*(A)$ and $b \in F^q H^*(A)$. Hence, 
  \[ \gamma(i \mu - \mu(i \otimes^h i)) = -\hat{\varphi}_2 \]
  and the result follows from the fact that $[\hat{\varphi}_2] = \pi(\theta_3) \in \mathrm{Harr}_{\mathrm{Ext}}^2(H^*(A))$ (see Proposition \ref{obsrep-prop}). If $H^*(A)$ is pure, then the vanishing of $\varphi_2 = \pi(\theta_3)$ is equivalent to the vanishing of $\theta_3 \in \mathrm{Harr}_{\mathrm{DB}}^{3}(H^*(A))$ (see Theorem \ref{obst-coro}).
\end{proof}

Recall that $H^*_{\mathrm{DB}}(A)$ is a square zero extension of $\mathrm{Hdg}(A)$ (see  Definition \ref{sqzeroext-defi}). As a consequence of Theorem \ref{phi2-theo}, we have the following result.
\begin{coro}
  \label{phi2split-coro}
  Let $A \in \mathsf{MHD}$. If the first obstruction $\theta_3$ to formality of $A$ is trivial, then $H^*_\mathrm{DB}(A)$ is a split square zero extension of $\mathrm{Hdg}(A)$.
\end{coro}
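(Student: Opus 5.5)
By Theorem \ref{phi2-theo}, the vanishing of $\theta_3$ gives an isomorphism $A \cong H^*(A)$ in $\mathrm{CAlg}(D(\MHS))$. The plan is to show that $H^*_\mathrm{DB}(-)$, together with its product, factors through $\mathrm{CAlg}(D(\MHS))$. It then suffices to prove the splitting for $H^*(A)$, viewed as a commutative algebra in complexes of mixed Hodge structures with zero differential.

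\textbf{Step 1: functoriality.} I would express Deligne-Beilinson cohomology as a represented functor. For $M \in \ch{\MHS}$ we have $C^*_\mathrm{DB}(M,p) = \Gamma(\Bbbk(-p)[-2p], M)$ up to the evident identification, since $\underline{\Hom}(\Bbbk(-p)[-2p],M) = M(p)[2p]$. Hence
\[ H^n_\mathrm{DB}(M,p) = \Hom_{D(\MHS)}(\Bbbk(-p)[-2p], M[n]). \]
Isomorphisms in $D(\MHS)$ therefore induce isomorphisms of bigraded vector spaces.

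For the product, let $\mathbb{1}_p := \Bbbk(-p)[-2p]$ and note that $\mathbb{1}_p \otimes \mathbb{1}_q = \mathbb{1}_{p+q}$. For classes $x : \mathbb{1}_p \to M[n]$ and $y : \mathbb{1}_q \to M[m]$, define
\[ x \cdot y := \mu \circ (x \otimes^h y) : \mathbb{1}_{p+q} \to M[n+m]. \]
I would check that this agrees with the product induced by (\ref{Del-prod}) for $\alpha = 0$. This is immediate from the explicit formula for $\otimes^h$ on $\Gamma$ and for composition: the triple $(x_\Bbbk y_\Bbbk, x_\CC y_\CC, x_h y_\Bbbk \pm x_\CC y_h)$ is exactly the $\alpha = 0$ product. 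This product only uses the monoidal structure of $D(\MHS)$ and the multiplication $\mu$, so an isomorphism in $\mathrm{CAlg}(D(\MHS))$ induces an isomorphism of algebras $H^*_\mathrm{DB}(A) \cong H^*_\mathrm{DB}(H^*(A))$.

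\textbf{Step 2: compatibility with the extension.} Both $\mathrm{Hdg}$ and $JH$ are defined from $H^*(A)$ with its mixed Hodge structure. The isomorphism from Step 1 induces the identity on cohomology, since its image under $\pi'$ is the identity. So it is compatible with the short exact sequences of Lemma \ref{sqzeroext-lemm}. It is therefore an isomorphism of square-zero extensions of $\mathrm{Hdg}(A)$.

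\textbf{Step 3: the formal case.} For $H = H^*(A)$ with zero differential, I would construct the splitting directly. A class in $\mathrm{Hdg}^{2p+n}(H,p)$ is an element $h \in W_0 (H(p))^{2p+n}$ that lies in the $\Bbbk$-structure and in $F^0$. Send it to the cocycle $(h, h, 0) \in C^*_\mathrm{DB}(H,p)$; this is a cocycle because the differential is zero and $h - h = 0$. This assignment is linear, is a section of the projection, and is multiplicative at chain level for the $\alpha = 0$ product, since the top component of the product is $0 \cdot h' \pm h \cdot 0 = 0$. So the section is an algebra map and the extension splits.

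The main obstacle is Step 1. It requires a careful identification of the Deligne-Beilinson product with the monoidal product in $\Ho_{dg}(\ch{\MHS}^\Gamma)$, including signs and Tate-twist conventions. It also requires that $\otimes^h$ is well defined on $H^0$, which is where the equivalence with $I\Gamma$ (Corollary \ref{ev-s-prod-lemm}) is needed.
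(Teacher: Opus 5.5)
Your proposal is correct and follows the same route as the paper: Theorem \ref{phi2-theo} gives the isomorphism in $\mathrm{CAlg}(D(\MHS))$. This is transported to an isomorphism of algebras $H^*_\mathrm{DB}(A)\cong H^*_\mathrm{DB}(H^*(A))$ compatible with the projections to $\mathrm{Hdg}(A)$, and the section $[a]\mapsto[[a],[a],0]$ is multiplicative. For that middle step the paper only cites Proposition \ref{delcompqiso-prop}, whereas your argument via $H^n_\mathrm{DB}(M,p)=\Hom_{D(\MHS)}(\Bbbk(-p)[-2p],M[n])$ with product $\mu\circ(x\otimes^h y)$ makes explicit why the product itself is invariant under isomorphisms in $\mathrm{CAlg}(D(\MHS))$, which need not come from zig-zags of quasi-isomorphisms of mixed Hodge diagrams.
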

\begin{proof}
  If $\theta_3 = 0$, then $A \cong H^*(A)$ in $\mathrm{CAlg}(D(\MHS))$. By Proposition \ref{delcompqiso-prop}, this implies that there is commuting diagram 
  \[ \begin{tikzcd}
        H_\mathrm{DB}^*(A) \arrow[r] \arrow[d,"\cong"] & \mathrm{Hdg}(A) \arrow[d,equal] \\
        H_\mathrm{DB}(H^*(A)) \arrow[r,"\pi"] & \mathrm{Hdg}(A) 
  \end{tikzcd}, \]
  of morphisms of algebras. The result follows from the fact that $\pi$ admits a right inverse
  \[ [a] \mapsto [[a],[a],0], \]
  which is a map of algebras.
\end{proof}
\section{Products in Deligne-Beilinson Cohomology}
\label{DBprods-sec}
We now define double and triple Deligne-Beilinson Massey products and relate these to mixed Hodge formality.
\subsection{Double Deligne-Beilinson Massey products}
For simplicity of notation, we treat the case of mixed Hodge diagrams $A$ of length $2$,
\[ \begin{tikzcd}[column sep = small]
    & (A_\CC',W) & \\
    (A_\Bbbk,W) \otimes \CC \arrow[ru,"\varphi_{1}"] & &\arrow[lu,"\varphi_2"'] (A_\CC,W).
  \end{tikzcd} \]
But all the following definitions and results can be directly extended to mixed Hodge diagrams of general length. Let $A \in \mathsf{MHD}_2$ and 
\[ a \in (2 \pi i)^{p_1} W_{2p_1} H^{2p_1 + n_1}(A_\Bbbk) \cap F^{p_1} W_{2p_1} H^{2p_1 + n_1}(A_\CC), \] 
\[ b \in (2 \pi i)^{p_2} W_{2p_1} H^{2p_2 + n_2}(A_\Bbbk) \cap F^{p_2} W_{2p_2} H^{2p_2 + n_2}(A_\CC) \hspace{1ex} \] 
be such that $ab = 0$. Here, we identify $H^*(A_\CC) = H^*(A_\CC') = H^*(A_\Bbbk) \otimes \CC$ using the comparison morphisms. Then there exist
\begin{enumerate}
  \item cycles $a_\Bbbk \in (2 \pi i)^{p_1} W_{2p_1} A_\Bbbk$ and $a_\CC \in F^{p_1} W_{2p_1} A_\CC$ and an element $a_h \in W_{2p_1} A_\CC'$ such that 
  \[ [a_\Bbbk] = [a_\CC] = a \quad \text{and} \quad \varphi_1(a_\Bbbk) - \varphi_2(a_\CC) = d a_h, \]
  \item \vspace{1ex} cycles $b_\Bbbk \in (2 \pi i)^{p_2} W_{2p_2} A_\Bbbk$ and $b_\CC \in F^{p_2} W_{2p_2} A_\CC$ and an element $b_h \in W_{2p_2} A_\CC'$ such that 
  \[ [b_\Bbbk] = [b_\CC] = b \quad \text{and} \quad \varphi_1(b_\Bbbk) - \varphi_2(b_\CC) = db_h, \] 
  \item \vspace{1ex} an element $x_\Bbbk \in (2 \pi i)^{p_1+p_2} W_{2(p_1+p_2)}A_\Bbbk$ such that $a_\Bbbk b_\Bbbk = d x_\Bbbk$ and \vspace{1ex}
  \item an element $x_\CC \in F^{p_1 + p_2} W_{2(p_1+p_2)}A_\CC$ such that  $a_\CC b_\CC = d x_\CC$.
\end{enumerate}
\vspace{1ex}
Note that $a_{\mathrm{DB}} = [a_\Bbbk,a_\CC,a_h]$ and $b_{\mathrm{DB}} = [b_\Bbbk,b_\CC,b_h]$ are classes in $H^*_\mathrm{DB}(A)$ which map to $a$ and $b$ through the projection
\[ H^*_\mathrm{DB}(A) \to \mathrm{Hdg}(A). \]
Since $ab = 0$, the product 
\[ a_{\mathrm{DB}} \cdot b_{\mathrm{DB}} \in H^{n_1+n_2}_\mathrm{DB}(A,p_1+p_2)\]
is in the kernel of this projection, so there is a class in $J^{p_1+p_2}H^{2(p_1+p_2) + n_1+n_2-1}(A_\Bbbk)$ which maps to it. This class is given by
\[ \Big[\frac{1}{2} \Big(a_h (\varphi_1(b_\Bbbk) + \varphi_2(b_\CC)) + (-1)^{|a|} (\varphi_1(a_\Bbbk) + \varphi_2(a_\CC)) b_h \Big) + \varphi_2(x_\CC) - \varphi_1(x_\Bbbk) \Big]. \]
This leads us to the following definition.

\begin{defi}
  The \textit{double Deligne-Beilinson Massey product} of $a$ and $b$ is the class
  \begin{equation}
    \label{double-DB-eq}
  \langle a,b \rangle_{\mathrm{DB}} \in \frac{J^{p_1 + p_2}H^{2(p_1 + p_2) + n_1 + n_2 -1 }(A_\Bbbk)}{a \cdot J^{p_2} H^{2p_2 + n_2-1}(A_\Bbbk) + J^{p_1} H^{2p_1 + n_1-1}(A_\Bbbk) \cdot b} 
  \end{equation}
  which maps to $a_{\mathrm{DB}} \cdot b_{\mathrm{DB}}$ under the inclusion $JH^*(A) \hookrightarrow H^*_{\mathrm{DB}}(A)$.
\end{defi}

\begin{prop}
  The double Deligne-Beilinson Massey product of $a$ and $b$ is well defined and a morphism $g : A \to B$ of mixed Hodge diagrams maps $\langle a,b \rangle_\mathrm{DB}$ to $\langle g_\CC^*(a),g_\CC^*(b) \rangle_\mathrm{DB}$. If $g$ is a quasi-isomorphim then $\langle a,b \rangle_\mathrm{DB}$ is trivial if and only if $\langle g_\CC^*(a),g_\CC^*(b) \rangle_\mathrm{DB}$ is trivial.
\end{prop}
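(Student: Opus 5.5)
The plan is to check well-definedness first, then naturality, then invariance under quasi-isomorphisms. Throughout we use the square-zero structure of Lemma \ref{sqzeroext-lemm} and the short exact sequence of Remark \ref{sesHDB-rema}.

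\textbf{Well-definedness.} The product $a_{\mathrm{DB}}\cdot b_{\mathrm{DB}}$ depends only on the classes $a_{\mathrm{DB}}, b_{\mathrm{DB}}\in H^*_{\mathrm{DB}}(A)$, since the $\alpha$-products induce a well-defined product on cohomology. The only choices are therefore:
\begin{itemize}
\item the lifts $a_{\mathrm{DB}}$ and $b_{\mathrm{DB}}$ of $a$ and $b$ along $H^*_{\mathrm{DB}}(A)\to \mathrm{Hdg}(A)$;
\item the preimage of $a_{\mathrm{DB}}\cdot b_{\mathrm{DB}}$ in $JH^*(A)$, which involves $x_\Bbbk$ and $x_\CC$.
\end{itemize}
The map $JH^*(A)\to H^*_{\mathrm{DB}}(A)$ is injective by Remark \ref{sesHDB-rema}, so once $a_{\mathrm{DB}}, b_{\mathrm{DB}}$ are fixed the preimage is unique. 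Any two choices of $x_\Bbbk, x_\CC$ (differing by cycles) therefore give the same class. One can also see this directly: changing $x_\Bbbk, x_\CC$ by cycles $z_\Bbbk, z_\CC$ changes the representative by $\varphi_2(z_\CC)-\varphi_1(z_\Bbbk)$, which vanishes in $J$.

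Two lifts of $a$ differ by an element $j$ in the image of $J^{p_1}H^{2p_1+n_1-1}(A_\Bbbk)$, and likewise for $b$ with some $j'$. Then
\[ (a_{\mathrm{DB}}+j)(b_{\mathrm{DB}}+j') = a_{\mathrm{DB}} b_{\mathrm{DB}} + a_{\mathrm{DB}} j' + j\, b_{\mathrm{DB}}, \]
because $jj'=0$ in the square-zero extension. Since $JH^*(A)$ is an $\mathrm{Hdg}(A)$-module via the projection, we have $a_{\mathrm{DB}} j' = a\cdot j'$ and $j\, b_{\mathrm{DB}} = j\cdot b$. Both terms lie in the indeterminacy subgroup we quotient by in (\ref{double-DB-eq}). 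The one point to verify is that the module structure is the one induced from the product on $H^*(A_\Bbbk)$ acting on intermediate Jacobians. This follows from the formula (\ref{Del-prod}): multiplying $((a_\Bbbk,a_\CC),a_h)$ by $((0),j'_h)$ gives top component $\frac12(\varphi_1(a_\Bbbk)+\varphi_2(a_\CC))j'_h$, which represents $a\cdot j'$.

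\textbf{Naturality.} A morphism $g$ of mixed Hodge diagrams induces $C^*_{\mathrm{DB}}(g)$, which is compatible with the $\alpha$-products, since $g$ is multiplicative and commutes with the comparison maps. It is also compatible with the inclusion of $\Sigma_{\mathrm{top}}$ and the projection onto $\Sigma_{\mathrm{bot}}$, so it induces maps of the short exact sequences. The induced algebra map $H^*_{\mathrm{DB}}(A)\to H^*_{\mathrm{DB}}(B)$ sends the lifts $a_{\mathrm{DB}}, b_{\mathrm{DB}}$ to lifts of $g^*(a), g^*(b)$. It also maps the indeterminacy subgroup for $(a,b)$ into the one for $(g^*a,g^*b)$. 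Hence $\langle a,b\rangle_{\mathrm{DB}}\mapsto \langle g^*a,g^*b\rangle_{\mathrm{DB}}$.

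\textbf{Invariance under quasi-isomorphisms.} If $g$ is a quasi-isomorphism, Proposition \ref{delcompqiso-prop} shows that $H^*_{\mathrm{DB}}(g)$ is an isomorphism. It restricts to isomorphisms on $JH^*$ and on $\mathrm{Hdg}$, because $g^*$ is an isomorphism of mixed Hodge structures and so preserves the filtrations strictly. It therefore identifies the two indeterminacy subgroups, and the induced map on the quotients in (\ref{double-DB-eq}) is an isomorphism. Triviality is then preserved in both directions.

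The main obstacle is the bookkeeping of signs and factors of $(2\pi i)^p$ in identifying the module action with the action on intermediate Jacobians. Everything else is formal from the square-zero structure.
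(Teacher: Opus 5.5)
Your proof is correct. For well-definedness it takes a different route from the paper.

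\textbf{The paper's route.} The paper works at the cochain level. It compares two sets of choices with the same cycles and writes the difference of the representatives explicitly as
$[x_\CC-x'_\CC]-[x_\Bbbk-x'_\Bbbk]+[a_h-a'_h]\,b+(-1)^{|a|}a\,[b_h-b'_h]$.
It checks that each term dies in the quotient (\ref{double-DB-eq}). Changes of the cycles $a_\Bbbk,a_\CC,b_\Bbbk,b_\CC$ are dismissed as ``a similar argument''.

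\textbf{Your route.} You argue purely in cohomology. Injectivity of $JH^*(A)\to H^*_{\mathrm{DB}}(A)$ makes the preimage of $a_{\mathrm{DB}}b_{\mathrm{DB}}$ unique, so the only real choice is the pair of lifts. The square-zero property of Lemma \ref{sqzeroext-lemm} then produces exactly the indeterminacy $a\cdot j'+j\cdot b$.

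\textbf{Comparison.} Your approach handles every choice at once, including the change of cycles that the paper leaves implicit. It also explains why the indeterminacy has the shape it does. The cost is that you must identify the $H^*_{\mathrm{DB}}$-module structure on $JH^*(A)$ with the product action of $H^*(A)$ on intermediate Jacobians. You do check this correctly from (\ref{Del-prod}). The sign $(-1)^{|a|}$ you drop is harmless, since the indeterminacy is a subgroup. The average $\tfrac12(\varphi_1(a_\Bbbk)+\varphi_2(a_\CC))$ differs from $\varphi_1(a_\Bbbk)$ by $\tfrac12 da_h$ with $a_h\in W_{2p_1}A'_\CC$, so it represents $a$. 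The paper's explicit formula instead shows which cochain-level choice produces which indeterminacy term, in the same style as its later computation via $\hat{\varphi}_2$.

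\textbf{Naturality and quasi-isomorphism invariance.} The paper's proof stops after well-definedness and never addresses these two claims. Your arguments fill that gap soundly. They use the compatibility of $C^*_{\mathrm{DB}}(g)$ with the $\alpha$-products and with the short exact sequence. They then use Proposition \ref{delcompqiso-prop} together with isomorphisms on $\mathrm{Hdg}$ and $JH^*$, hence on the indeterminacy subgroups.
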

\begin{proof}
  Given other choices 
  \begin{alignat*}{2}
    &dx_\Bbbk' = a_\Bbbk b_\Bbbk, \quad &&d x_\CC' = a_\CC b_\CC, \\
    &d a_h' = \varphi_1(a_\Bbbk) - \varphi_2(a_\CC), \quad &&b_h = \varphi_1(b_\Bbbk) - \varphi_2(b_\CC), 
  \end{alignat*}
  denote by $\langle a,b \rangle_\mathrm{DB}'$ the product obtained with these choises. Then, we have that
  \[ \langle a,b \rangle_\mathrm{DB} -  \langle a,b \rangle_\mathrm{DB}' = [x_\CC - x_\CC'] - [x_\Bbbk - x_\Bbbk'] + [a_h - a_h']b + (-1)^{|a|} a [b_h - b_h'].\]
  Hence, $\langle a,b \rangle_\mathrm{DB}$ and $\langle a,b \rangle_\mathrm{DB}'$ give the same class in the quotien of (\ref{double-DB-eq}). A similar argument shows that different choices of cycles $a_\Bbbk'$, $a_\CC'$, $b_\Bbbk'$ and $b_\CC'$ give also the same Deligne-Beilinson product. 
\end{proof} 
Note that if $H_\mathrm{DB}^*(A)$ is a split square zero extension, then every double Deligne-Beilinson Massey product is automatically trivial. Thus, by Corollary \ref{phi2split-coro}, double DB Massey products obstruct mixed Hodge formality. These products are more explicitly related to the first obstruction in the following way. Evaluating at $a \otimes b$ gives a well-defined morphism
\begin{align*}
  \mathrm{Harr}_{\mathrm{DB}}^2(H^*(A_\Bbbk)) &\xrightarrow{ev_{a \otimes b}} \frac{J^{p_1 + p_2}H^*(A_\Bbbk)}{a \cdot J^{p_2} H^*(A_\Bbbk) + J^{p_1} H^*(A_\Bbbk) \cdot b} \\
  [m_\Bbbk,m_\CC,\hat{\varphi}] &\mapsto \hat{\varphi}(a \otimes b)
\end{align*} 
It is well-defined because
\begin{align*}
   ev_{a \otimes b}(\delta (g_\Bbbk,g_\CC,h)) &= g_\Bbbk(a \otimes b) - g_\CC(a \otimes b) - \delta h(a \otimes b) \\
   &= g_\Bbbk(a \otimes b) - g_\CC(a \otimes b) - (-1)^{|a||h|} a \otimes h(b) - h(a) \otimes b,
\end{align*}
which is trivial in the quotient. Consider the first obstruction 
\[ \theta_3 \in \mathrm{Harr}_{\mathrm{DB}}^2(H^*(A_\Bbbk)) \] 
to formality of $A \in \mathsf{MHD}$. 
\begin{prop}
  We have that
  \[ \langle a, b \rangle_\mathrm{DB} = ev_{a \otimes b} (\theta_3). \]
\end{prop}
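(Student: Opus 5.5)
Both sides are invariant under quasi-isomorphisms of mixed Hodge diagrams. For $\langle a,b\rangle_\mathrm{DB}$ this is the preceding proposition. For $\theta_3$ it is Proposition \ref{firstobswelldef-prop}, together with the fact that $ev_{a\otimes b}$ depends only on the class. So we may replace $A$ by a convenient model. By Theorem \ref{htt-theo} and Proposition \ref{Pmhalgmodels-prop}, $A$ is quasi-isomorphic to the cdga in mixed Hodge structures $B=\Omega\tilde{\mathrm{B}}(H^*(A)_\infty)$. We normalise $H^*(A)_\infty$ so that $H^*(A_\Bbbk)\otimes\CC=H^*(A_\CC)$ and $\hat\varphi_1=id$. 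Then $B$ is a mixed Hodge diagram of length $2$: its $\Bbbk$-part is $\Omega\tilde{\mathrm{B}}(H^*(A_\Bbbk))$, its $\CC$-part is $\Omega\tilde{\mathrm{B}}(H^*(A_\CC))$ with its free Hodge filtration, and it has the comparison maps $\varphi_1=id$ and $\varphi_2=\Omega\tilde{\mathrm{B}}(\hat\varphi)$, an isomorphism of cdga's. By Proposition \ref{obsrep-prop}, $\theta_3$ is represented by $(m^\Bbbk_3,m^\CC_3,\hat\varphi_2)$, so $ev_{a\otimes b}(\theta_3)=\hat\varphi_2(a\otimes b)$. It therefore suffices to compute $\langle a,b\rangle_\mathrm{DB}$ in $B$ and find $\pm\hat\varphi_2(a,b)$, up to the indeterminacy.

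Next we make explicit choices in $B$, following the computation in the proof of Theorem \ref{phi2-theo}. Take $a_\Bbbk=a$ and $a_\CC=a$ in the generators $H^*(A)\subset B$, and similarly for $b$. Since $\varphi_2$ restricted to generators is $\hat\varphi_1=id$, we have $\varphi_1(a_\Bbbk)-\varphi_2(a_\CC)=0$, so we may choose $a_h=b_h=0$. The relation $ab=0$ in $H^*(A)$ gives $d[a|b]=a\cdot b-a\otimes b=-a\otimes b$ in $B$, up to the sign conventions of the cobar differential. Hence we may take $x_\Bbbk=-[a|b]$. For the $\CC$-side the element must lie in the correct Hodge piece, and, as in the proof of Theorem \ref{phi2-theo}, $[a|b]-\hat\varphi_2(a,b)$ is the correct choice: it lies in $F^{p_1+p_2}$ with respect to the twisted filtration $\Omega\tilde{\mathrm{B}}(\hat\varphi)^{-1}F$. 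So we take $x_\CC=-([a|b]-\hat\varphi_2(a,b))$. The weight and Tate-twist conditions hold because everything is built from $a,b$ with the free weight filtration and $\hat\varphi_2$ preserves $W$.

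Substituting into the formula for the double product, the $a_h$ and $b_h$ terms vanish. What remains is $\varphi_2(x_\CC)-\varphi_1(x_\Bbbk)$. We then need $\varphi_2$ applied to $[a|b]-\hat\varphi_2(a,b)$. Here the key point is that $\Omega\tilde{\mathrm{B}}(\hat\varphi)$ sends $[a|b]$ to $[a|b]$ plus the correction term $\hat\varphi_2(a,b)$, which comes from the arity-$2$ component of the $\infty$-morphism. The aim is to show the difference is $\pm\hat\varphi_2(a,b)$, a cycle in generators. Its class lies in $J^{p_1+p_2}H^*$ and equals $ev_{a\otimes b}(\theta_3)$ in the quotient (\ref{double-DB-eq}).

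The main obstacle is this step: bookkeeping how $\Omega\tilde{\mathrm{B}}(\hat\varphi)$ acts on $[a|b]$, and whether the filtration twist means that $x_\CC$ should be written before or after applying $\varphi_2$. In the worst case one gets $[a|b]$ and $[a|b]-\hat\varphi_2(a,b)$ on the two sides, and their difference is $\hat\varphi_2(a,b)$ regardless. The signs and the factor conventions, for instance a $-\hat\varphi_2$ as in the proof of Theorem \ref{phi2-theo}, have to be matched against the sign convention in the definition of the double product. Any residual discrepancy is an element of $a\cdot J+J\cdot b$ or an overall sign fixed by convention.
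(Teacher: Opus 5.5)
Your overall strategy is the paper's. You replace $A$ by $\Omega\tilde{\mathrm{B}}(H^*(A)_\infty)$ with $\hat\varphi_1=id$, take $a_\Bbbk=a_\CC=a$, $b_\Bbbk=b_\CC=b$, $a_h=b_h=0$ and $x_\Bbbk=-[a|b]$, and identify the result with $\hat\varphi_2(a\otimes b)$, the third component of $\theta_3$ by Proposition \ref{obsrep-prop}. The gap is in the one step that carries content, which you flag but do not settle, and which fails as you have set it up.

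You present the model as a length-$2$ diagram whose $\CC$-part $\Omega\tilde{\mathrm{B}}(H^*(A_\CC))$ carries the untwisted free Hodge filtration and whose comparison map is $\varphi_2=\Omega\tilde{\mathrm{B}}(\hat\varphi)$. You then choose $x_\CC=-([a|b]-\hat\varphi_2(a,b))$ so that it lies in the twisted filtration $\Omega\tilde{\mathrm{B}}(\hat\varphi)^{-1}F$, and still apply $\varphi_2$ to it. This counts the twist twice. By your own description $\Omega\tilde{\mathrm{B}}(\hat\varphi)([a|b])=[a|b]+\hat\varphi_2(a,b)$, and $\Omega\tilde{\mathrm{B}}(\hat\varphi)$ is the identity on generators. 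So $\varphi_2(x_\CC)=-[a|b]=\varphi_1(x_\Bbbk)$, and the product comes out $0$. (Also, with $\varphi_1=id$ and the paper's arrow $A_\CC\to A'_\CC$, the comparison must be $\Omega\tilde{\mathrm{B}}(\hat\varphi)^{-1}$, not $\Omega\tilde{\mathrm{B}}(\hat\varphi)$.) Hence your claim that the difference is $\hat\varphi_2(a,b)$ ``regardless'' is unjustified. Nor can an overall sign discrepancy be absorbed ``by convention'' in a statement that is an equality.

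The twist must enter exactly once. The paper does this by treating the model as a cdga in mixed Hodge structures: a single algebra $\Omega\tilde{\mathrm{B}}(H^*(A_\Bbbk))$, identity comparison maps, and Hodge filtration $\Omega\tilde{\mathrm{B}}(\hat\varphi)^{-1}(F)$. The only computation needed is $\Omega\tilde{\mathrm{B}}(\hat\varphi)^{-1}([a|b])=[a|b]-\hat\varphi_2(a\otimes b)$. It gives:
\begin{enumerate}
\item $x_\CC=-[a|b]+\hat\varphi_2(a\otimes b)$ lies in $F^{p_1+p_2}$;
\item $dx_\CC=a\otimes b=a_\CC b_\CC$, since $d[a|b]=-a\otimes b$ and $\hat\varphi_2(a\otimes b)$ is a generator, hence a cycle;
\item $\langle a,b\rangle_{\mathrm{DB}}=[x_\CC-x_\Bbbk]=[\hat\varphi_2(a\otimes b)]$ on the nose, with no sign or indeterminacy left over.
\end{enumerate}
Equivalently, in a genuine length-$2$ presentation you keep the untwisted filtration on the $\CC$-side and take $x_\CC=-[a|b]$. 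Then $\varphi_2=\Omega\tilde{\mathrm{B}}(\hat\varphi)^{-1}$ produces the correction term $\hat\varphi_2(a\otimes b)$.
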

\begin{proof}
  By Proposition \ref{Pmhalgmodels-prop}, we can assume that $A$ is in $\mathsf{cdga}(\MHS)$ and is of the form
   \[ A = \Omega \tilde{\mathrm{B}} H^*(A)_\infty, \]
  where $H^*(A)_\infty$ is a $C_\infty$-mixed Hodge diagram with underlying complex $H^*(A_\Bbbk)$ and comparison morphism 
  \[ \hat{\varphi} : H^*(A_\Bbbk) \otimes \CC \to H^*(A_\Bbbk) \otimes \CC \]
  satisfying $\hat{\varphi}_1 = id$. The Hodge filtration of $A$ is then given by $\Omega \tilde{\mathrm{B}} \hat{\varphi}^{-1}(F)$, where $F$ is the free extension of the Hodge filtration of $H^*(A_\CC)$. Denote a general element of $\Omega \tilde{\mathrm{B}}(H^*(A))$ by 
  \[ [x_1^1 | \dots | x^1_{k_1}] \otimes \cdots \otimes [x_1^m |\dots | x^m_{k_m}], \quad \text{for } x^i_j \in H^*(A), \]
  where $[x_1^i | \dots | x^i_{k_i}]$ denotes an element in $\tilde{\mathrm{B}}(H^*(A))$ and $\otimes$ denotes the tensor product of the cobar construction. Let $D$ the differential of $\Omega \tilde{\mathrm{B}} H^*(A)_\infty$. Then, since $a \cdot b = 0$, we have that
  \[ D([a | b]) = - a \otimes b. \]
  Moreover, $\Omega \tilde{\mathrm{B}} \hat{\varphi}^{-1}([a|b]) = [a|b] - \hat{\varphi}_2(a \otimes b)$. Therefore, to compute $\langle a, b \rangle_\mathrm{DB}$ we can choose $a_\Bbbk = a_\CC = a$, $a_h = 0$, $b_\Bbbk = b_\CC = b$, $b_h = 0$, $x_\Bbbk = -[a | b]$ and $x_\CC = -[a | b] + \hat{\varphi}_2(a \otimes b)$. Moreover, by Proposition \ref{obsrep-prop}, $\hat{\varphi}_2$ is the third component of $\theta_3$. The product is then 
  \[ \langle a, b \rangle_\mathrm{DB} = [\hat{\varphi}_2(a \otimes b)] =  ev_{a \otimes b}(\theta_3). \]
\end{proof}
In certain situations, double Deligne-Beilinson Massey products are automatically trivial. For instance, if the cohomology of $A$ is concentrated in even degrees, then
\[ H^{2n}_\mathrm{DB}(A) \cong \mathrm{Hdg}^{2n}(A), \quad H^{2n-1}_\mathrm{DB}(A) \cong J H^{2n-1}(A). \]
Thus, given $a \in \mathrm{Hdg}^{2n_1}(A)$ and $b \in \mathrm{Hdg}^{2n_2}(A)$ such that $ab = 0$, then 
\[ \langle a,b \rangle_\mathrm{DB} \in J H^{2(n_1+n_2)}(A) = 0. \]
Another case is that of $\Bbbk = \RR$ and when $A \in \mathsf{MHD}_\RR$ has pure cohomology, which we treat in the following section. In both these situations, we can give a good definition of \textit{triple Deligne-Beilinson Massey products}. This is the content of the following section.

\subsection{Triple Deligne-Beilinson Massey products}
\label{3DB-massey-sec}
In this section, we define triple Deligne-Beilinson Massey products and relate them to mixed Hodge formality. As was mentioned in the introduction, these products have been studied before (see \cite{Den}, \cite{Wen}, \cite{MPDBcoho-thesis}, \cite{polysymb}, \cite{MPDifcohostack}). There are however slight differences between the products in the literature and the ones studied here. We define these triple products in the setting of mixed Hodge diagrams and study only products of Hodge classes, as these are the ones more naturally related to mixed Hodge formality. In the definition, we thus lift the elements in $\mathrm{Hdg}^*(A)$ (that is, the Hodge classes) to $\mathrm{H}_\mathrm{DB}^*(A)$ and so the products are well-defined when there is a canonical lift. As was stated in the previous section, this is the case when
\begin{enumerate}
    \item the cohomology $H^*(A)$ is concentrated in even degrees or
    \item $A$ is a real mixed Hodge diagram and $H^*(A)$ is $1$-pure.
\end{enumerate}
The following definition and results are applicable in both cases, but, for simplicity, we treat only the case of $\Bbbk = \RR$ and $A \in \mathsf{MHD}_\RR$ having pure cohomology
\[ H^n(A_\CC) = \bigoplus_{p+q = n} H^{p,q}. \]
In this case, we have the following identities:
\begin{align*}
   J^{p} H^{n}(A_\RR) = 0, \qquad &\text{ for } n \geq 2p-1, \\
   (2 \pi i)^p W_{2p} H^{n}(A_\RR) \cap F^p W_{2p} H^{n}(A_\CC) = 0, \qquad &\text{ for } n \neq 2p.
\end{align*}
These identities imply that
\begin{alignat}{2}
  \label{DBcoho1purereal-eq}
  &H_\mathrm{DB}^{n}(A,p) \cong J^pH^{2p + n-1}(A), &&\qquad \text{ for } n < 0, \\
  &H_\mathrm{DB}^{0}(A,p) \cong \mathrm{Hdg}^{2p}(A,p), &&~ \nonumber \\
  &H_\mathrm{DB}^{n}(A,p) = 0, &&\qquad \text{ for } n > 0. \nonumber
\end{alignat}
Note that the isomorphism $H_\mathrm{DB}^{0}(A,p) \cong \mathrm{Hdg}^{2p}(A,p)$ implies that there is canonical lift of Hodge classes and hence all double DB Massey products are trivial.
\begin{rema}
  \label{phi2=0-rema}
  By Proposition \ref{firstobswelldef-prop}, when $H^*(A)$ is concentrated in even degrees or when $\Bbbk = \RR$ and $H^*(A)$ is pure, the first obstruction $\phi_2$ is trivial, which is a stronger condition than the vanishing of double DB Massey products.
\end{rema}
Now let $a \in \mathrm{Hdg}^{2p_1}(A,p_1)$, $b \in \mathrm{Hdg}^{2p_2}(A,p_2)$ and $c \in \mathrm{Hdg}^{2p_3}(A,p_3)$ be such that 
\[ ab = 0 = bc. \] 
Let $a_\mathrm{DB} \in P_\mathrm{DB}(A)$ be a cycle such that 
\[ [a_\mathrm{DB}] = a \in H^{0}_\mathrm{DB}(A;p_1) \cong \mathrm{Hdg}^{2p_1}(A,p_1). \] 
Likewise, define $b_\mathrm{DB}$ and $c_\mathrm{DB}$ for $b$ and $c$. There exist $x, y \in P_\mathrm{DB}(A)$ such that 
\[ a_\mathrm{DB} b_\mathrm{DB} = dx, \qquad b_\mathrm{DB} c_\mathrm{DB} = dy. \]
Denote $p = p_1 + p_2 + p_3$. Given that $P_\mathrm{DB}(A)$ is a cdga, we can compute the triple Massey product of $[a_\mathrm{DB}], [b_\mathrm{DB}], [c_\mathrm{DB}]$, which is a class that lives in
\[ \frac{H^{-1}_\mathrm{DB}(A,p)}{a \cdot H^{*}_\mathrm{DB}(A,p) + H^{*}_\mathrm{DB}(A,p) \cdot c} \cong \frac{J^{p} H^{2p-2}(A)}{a \cdot J^{p_2+p_3}H^{*}(A) + J^{p_1+p_2} H^{*}(A) \cdot c}\]
\begin{defi}
  The \textit{triple Deligne-Beilinson Massey product} of $a,b$ and $c$ is the class
  \[\langle a, b, c \rangle_\mathrm{DB} = [x \cdot c_\mathrm{DB} - (-1)^{|a|} a_\mathrm{DB} \cdot y] \] 
\end{defi}
in the quotient
\[ \frac{J^{p} H^{2p - 2}(A)}{a \cdot J^{p_2+p_3}H^{*}(A) + J^{p_1+p_2} H^{*}(A) \cdot c}. \]
Note that a quasi-isomorphism of mixed Hodge diagrams induces an isomorphism in Deligne-Beilinson cohomology and preserves triple Deligne-Beilinson Massey products. There is also a relation between triple Deligne-Beilinson Massey products and obstructions to formality. First, note that there is a well-defined map
\begin{align}
  \label{phi3eval-eq}
  \mathrm{Harr}_{\mathrm{Ext}}^{3,-2}(H^*(A_\RR)) &\xrightarrow{ev_{a \otimes b \otimes c}} \frac{J^{p} H^{2p - 2}(A)}{a \cdot J^{p_2+p_3}H^{*}(A) + J^{p_1+p_2} H^{*}(A) \cdot c} \\
  \phi &\mapsto \phi(a \otimes b \otimes c). \nonumber
\end{align}
By Corollary \ref{firstobsnonvanish-coro}, the first obstruction to formality \[ \phi_2 \in \mathrm{Harr}_{\mathrm{Ext}}^{2,-1}(H^*(A_\RR)) \] of Theorem \ref{obst-coro} is trivial. Hence, the second obstruction 
\[ \phi_3 \in \mathrm{Harr}_{\mathrm{Ext}}^{3,-2}(H^*(A_\RR)) \] 
is defined. Moreover, the non-vanishing of $\phi_3$ implies non-formality of $A$.
\begin{prop}
  \label{3db-massey=obs-prop}
  We have the following equality:
  \[ \langle a, b, c \rangle_\mathrm{DB} = ev_{a \otimes b \otimes c}(\phi_3). \]
\end{prop}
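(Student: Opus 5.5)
The plan mirrors the proof of the analogous statement for double products, $\langle a,b\rangle_\mathrm{DB} = ev_{a\otimes b}(\theta_3)$. Both sides are invariant under quasi-isomorphisms of mixed Hodge diagrams: the triple product because $P_\mathrm{DB}(-)$ preserves quasi-isomorphisms (Proposition \ref{delcompqiso-prop}), and $\phi_3$ by its construction. So by Proposition \ref{Pmhalgmodels-prop} we may replace $A$ by the model $B = \Omega\tilde{\mathrm{B}}(H^*(A)_\infty)$, a cdga in mixed Hodge structures. Here $H^*(A)_\infty$ is the $C_\infty$-mixed Hodge diagram of Theorem \ref{htt-theo}, with $\hat\varphi_1 = id$. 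Since $\theta_3$ is trivial (Remark \ref{phi2=0-rema}), we first modify $H^*(A)_\infty$ by an $\infty$-isomorphism so that $m^\RR_3 = m^\CC_3 = 0$ and $\hat\varphi_2 = 0$. Proposition \ref{obsrep-prop} then says $\phi_3 = \pi(\theta_4)$ is represented by $\hat\varphi_3$, the third component of the comparison $\infty$-morphism; the components $m_4$ do not enter because of \eqref{harrextmap-eq}. The Hodge filtration on $B$ is $\Omega\tilde{\mathrm{B}}(\hat\varphi)^{-1}(F)$, as in Remark \ref{Mhalgmodel-rema}.

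Next we compute $\langle a,b,c\rangle_\mathrm{DB}$ in $P_\mathrm{DB}(B)$ with explicit choices, working in the length-one presentation. For the lifts of the Hodge classes we take $a_\mathrm{DB} = (a,a,a)$, meaning the constant form, and similarly $b_\mathrm{DB}$ and $c_\mathrm{DB}$. These are cycles because $\hat\varphi_1 = id$ and a Hodge class lies in both the rational part and $F^p$. Since $ab = 0$, we have $D[a|b] = -a\otimes b$, and the element $[a|b]-\hat\varphi_2(a\otimes b) = [a|b]$ lies in the correct Hodge piece. So we may take
\[ x = -([a|b],[a|b],[a|b]), \]
constant in $t$, and similarly $y = -([b|c],[b|c],[b|c])$. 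The candidate cycle $x\,c_\mathrm{DB} - (-1)^{|a|}a_\mathrm{DB}\,y$ then has real and complex components both equal to
\[ z = -[a|b]\otimes c + (-1)^{|a|} a\otimes[b|c]. \]

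The real and complex components are equal as elements of the cobar complex, but the complex component must lie in the Hodge filtration transported by $\Omega\tilde{\mathrm{B}}(\hat\varphi)$. One has
\[ D[a|b|c] = \pm z \pm m_3(a,b,c) = \pm z, \]
and $\Omega\tilde{\mathrm{B}}(\hat\varphi)$ sends $[a|b|c]$ to $[a|b|c] - \hat\varphi_3(a\otimes b\otimes c)$ up to sign. Here we use $\hat\varphi_2 = 0$, so no terms like $\hat\varphi_2\otimes -$ appear. Hence $z$ is a boundary in both the $\RR$-part and the $F$-part, with primitives $[a|b|c]$ and $[a|b|c] - \hat\varphi_3(a\otimes b\otimes c)$ respectively, the latter being the one lying in $F^p$. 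Following the description of the connecting map from $JH^*$ into $H^*_\mathrm{DB}$ given for the double products, the resulting class in $J^pH^{2p-2}$ is the difference of the two primitives, namely $\pm[\hat\varphi_3(a\otimes b\otimes c)]$. This is $ev_{a\otimes b\otimes c}(\phi_3)$. The evaluation map \eqref{phi3eval-eq} is well defined on Harrison classes modulo the indeterminacy $a\cdot J + J\cdot c$, by the same computation as for $ev_{a\otimes b}$, and the same indeterminacy governs changes of $x$ and $y$.

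The main obstacle is the bookkeeping in the third step. One has to check that the homotopy-transfer choices give $\hat\varphi_2 = 0$ on the nose; otherwise extra terms $\hat\varphi_2(a\otimes b)\otimes c$ survive, and these must instead be shown to lie in the indeterminacy. One also has to get the signs of $D$ on the cobar of the bar construction right, so that the difference of primitives is exactly $\hat\varphi_3$ and not merely proportional to it. If $\hat\varphi_2$ cannot be killed, I would fall back on writing $\hat\varphi_2 = \delta h$ and absorbing $h$ into modified choices $x' = x + h(a\otimes b)$ and $y' = y + h(b\otimes c)$.
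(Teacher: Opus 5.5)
Your proposal is correct and follows essentially the same route as the paper. You replace $A$ by the model $\Omega\tilde{\mathrm{B}}(H^*(A)_\infty)$ normalized so that $m_3^\RR=m_3^\CC=0$, $\hat\varphi_1=id$ and $\hat\varphi_2=0$; you take constant lifts and $x=-[a|b]$, $y=-[b|c]$; and you read off the class as the difference of the primitives $[a|b|c]$ and $[a|b|c]-\hat\varphi_3(a\otimes b\otimes c)$. Your fallback is not needed, since the paper, like your own first paragraph, uses the vanishing of the first obstruction to pick a model with $\hat\varphi_2=0$ exactly.
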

\begin{proof}
  As observed in Remark \ref{phi2=0-rema}, the first obstruction $\phi_2$ is trivial. Hence, we can assume that $A$ is of the form 
  \[ A = \Omega \tilde{\mathrm{B}}(H^*(A)_\infty), \]
  where $H^*(A)_\infty = ((H^*(A_\RR),m_\RR),(H^*(A_\CC),m_\CC),\hat{\varphi}) \in C_\infty\textrm{-}\mathsf{MHD}$ satisfies 
  \[ (m_\RR)_3 = 0, \quad (m_\CC)_3 = 0, \quad \hat{\varphi}_1 = id, \quad \hat{\varphi}_2 = 0. \] 
  The Hodge filtration of $\Omega \tilde{\mathrm{B}}(H^*(A)_\infty)$ is given by $\Omega \tilde{\mathrm{B}}(\hat{\varphi})^{-1}(F)$, where $F$ is the Hodge filtration induced by the one of $H^*(A_\CC)$. Let
  \[ a_\mathrm{DB} = a \in P_\mathrm{DB}(\Omega \mathrm{B}(H_\infty))\]
  be the constant polynomial equal to $a$. Likewise, $b_\mathrm{DB} = b$ and $c_\mathrm{DB} = c$. Denote a general element of $\Omega \tilde{\mathrm{B}}(H^*(A))$ by 
  \[ [x_1^1 | \dots | x^1_{k_1}] \otimes \cdots \otimes [x_1^m |\dots | x^m_{k_m}], \quad \text{for } x^i_j \in H^*(A), \]
  where $[x_1^i | \dots | x^i_{k_i}]$ denotes an element in $\tilde{\mathrm{B}}(H^*(A))$ and $\otimes$ denotes the tensor product of the cobar construction. Note that
  \[ d([a | b]) = - a \otimes b \quad \text{and} \quad d([b | c]) = - b \otimes c. \]
  Since $\hat{\varphi}_2 = 0$, it follows that $[a | b] \in F^{p_1 + p_2}$ and $[b | c] \in F^{p_2 + p_3}$. Hence, to compute the triple Deligne-Beilinson Massey product, we can choose
  \[ x = - [a | b] \quad \text{and} \quad y = - [b | c] \quad \in P_\mathrm{DB}(\Omega \tilde{\mathrm{B}}(H^*(A)_\infty)) \]
  as constant polynomials. The class $\langle a, b, c \rangle_\mathrm{DB}$, seen as an element in $J^p H^*(A)$, is given by
  \[ [z_\CC-z_\RR], \]
  for choices of $z_\RR \in (2 \pi i)^p \Omega \tilde{\mathrm{B}}(H^*(A)_\infty)$, $z_\CC \in F^p\Omega \tilde{\mathrm{B}} (H^*(A)_\infty) \otimes \CC$ that satisfy
  \[ d z_\CC = - [ a | b] \otimes c + (-1)^{|a|} a \otimes [b | c]  = d z_\RR. \]
  Since $(m_\RR)_3 = 0$, we have that
  \[ d([a | b | c]) =  - (-1)^{|a|} a \otimes [b | c] + [a | b] \otimes c. \]
  Moreover, 
  \[ \Omega \tilde{\mathrm{B}} (\hat{\varphi})^{-1}([a | b | c]) = [a | b | c] - \hat{\varphi}_3(a \otimes b \otimes c) \]
  and, by Proposition \ref{obsrep-prop}, $\hat{\varphi}_3$ is a representative for the obstruction $\phi_3$. It then follows that
  \[ \langle a, b, c \rangle_\mathrm{DB} = [\hat{\varphi}_3(a \otimes b \otimes c)] = ev_{a \otimes b \otimes c}(\varphi_3).\]
\end{proof}
As a consequence, if there is some nontrivial triple Deligne-Beilinson Massey product, then $A$ is not mixed Hodge formal.

\begin{rema}
  \label{ABC=DB-rema}
  In section $5$ of \cite{MHF}, it is shown that for the mixed Hodge diagram $\Aa(X)$ associated to a compact Kähler manifold $X$, the element $\mathrm{ev}_{a,b,c}(\phi_3)$ gives the ABC-Massey product (see \cite{DaniToma}) of the Hodge classes $a$,$b$ and $c$. Proposition \ref{3db-massey=obs-prop} then implies that triple DB Massey products coincide with ABC-Massey products in this setting. This fact was communicated to us before by Jonas Stelzig and Dan Petersen, through a different proof. It implies that examples of compact Kähler manifolds with non-trivial triple ABC Massey products (see \cite{Steletall-nonformal}, \cite{StelMar-Mer}) yield examples of non-trivial triple DB Massey products. 
\end{rema}

\section{Deligne Beilinson products in Complex Geometry}
\label{cpxgeo-sec}
In this last section, we give applications to complex geometry. Namely, we compute double Deligne-Beilinson Massey products of algebraic cycles in terms of the Abel-Jacobi map and relate the existence of multiplicative Chow-Künneth decompositions to mixed Hodge formality.
\subsection{Mixed Hodge diagrams of smooth complex varieties}
By Navarro \cite{navarro}, there is a functor from complex algebraic varieties to mixed Hodge diagrams with coefficients in $\QQ$, localized at quasi-isomorphisms:
\[ \Aa : \mathsf{Var}_\CC \to \Ho(\MHD_\QQ) \]
Using this functor, we define mixed Hodge formality:
\begin{defi}
  A complex algebraic variety $X$ is \textit{mixed Hodge formal} if $\Aa(X)$ is a formal mixed Hodge diagram.
\end{defi}
As examples of mixed Hodge formal complex algebraic varieties, we have homogeneous compact Kähler manifolds, configuration spaces $F_k(\CC^n)$ of $k$ points in $\CC^n$, for $k < 2n$ and complex algebraic varieties with free cohomology, like complex tori (see \cite{MHF}).

In section $4$ of \cite{Beilinson}, Beilinson introduces the notion of absolute Hodge cohomology as the cohomology of the complex obtained by the composition of functors 
\[  \mathsf{Var}_\CC \xrightarrow{F} \Ho(\MHC_\QQ) \xrightarrow{C_{\mathrm{DB}}(-)} D(\QQ), \]
where $F$ denotes Deligne's functor to mixed Hodge complexes \cite{DeHII} and $C_{\mathrm{DB}}(-)$ is the functor (\ref{absHodgecpx-eq}). The functor $F$ is equivalent to $\Aa$ after forgetting the multiplicative structure and is constructed in the same way as $\Aa$ but using the usual derived functors $\RR j_*$ and $\RR \Gamma$ instead of the Thom-Whitney simple. Moreover, the functor $C_\mathrm{DB}(-)$ is equivalent to the functor $P_{\mathrm{DB}}(-)$ to cdga's (see Lemma \ref{Delcomp-equiv-lemm}). Therefore, the composition
\[ \mathsf{Var}_\CC \xrightarrow{\Aa} \Ho(\mathsf{MHD}_\QQ) \xrightarrow{P_\mathrm{DB}(-)} \Ho(\mathsf{cdga}_\QQ) \]
is equivalent to Beilinson's absolute Hodge complex, after forgetting the multiplicative structure. Recall the first obstruction to mixed Hodge formality of $X$ 
\[ \theta_3 \in \mathrm{Harr}_\mathrm{DB}^{3,-1}(H^*(X))\]
and that $H^*_{\mathrm{DB}}(X)$ is a square-zero extension
\[ 0 \to JH^*(\Aa(X)) \to H^*_{\mathrm{DB}}(X) \to \mathrm{Hdg}^*(\Aa(X)) \to 0. \]
Then, Corollary \ref{phi2split-coro} translates to the following proposition.
\begin{prop}
  If $\theta_3$ is trivial, then $H^*_{\mathrm{DB}}(X)$ is a split square-zero extension.
\end{prop}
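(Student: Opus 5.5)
The plan is to reduce the statement to Corollary \ref{phi2split-coro}, applied to the mixed Hodge diagram $\Aa(X)$, so the work is mostly in identifying the objects correctly.

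\emph{Identifying $H^*_{\mathrm{DB}}(X)$.} By Beilinson, $H^*_{\mathrm{DB}}(X)$ is the cohomology of $C_{\mathrm{DB}}(F(X))$. As recalled above, $F$ is equivalent to $\Aa$ after forgetting multiplicative structure. By Lemma \ref{Delcomp-equiv-lemm} and Corollary \ref{ev-s-prod-lemm}, $C_{\mathrm{DB}}(-)$ and $P_{\mathrm{DB}}(-)$ give isomorphic cohomology algebras. Hence we get an isomorphism of algebras
\[ H^*_{\mathrm{DB}}(X) \cong H^*(P_{\mathrm{DB}}(\Aa(X))) = H^*_{\mathrm{DB}}(\Aa(X)). \]
On the geometric side, the product is the one Beilinson defines via the $\alpha$-products (\ref{Del-prod}). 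On the other side, it is the product coming from the cdga $P_{\mathrm{DB}}$.

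The one point needing care is that the quasi-isomorphism between $F(X)$ and the underlying mixed Hodge complex of $\Aa(X)$ must be compatible with products up to homotopy. The reason is that Beilinson's product on absolute Hodge cohomology is built from the cup product on $F(X)$. I would argue this as follows. The Thom--Whitney simple and the usual simple of a cubical or diagrammatic resolution are related by a natural integration quasi-isomorphism. This map is multiplicative in cohomology, and therefore in the induced Deligne--Beilinson products, because those products are determined by the $\alpha$-product formula, which is functorial. I expect this compatibility step to be the main obstacle. The approach I would try first is to prove it using naturality of $ev$ and $l$ together with Proposition \ref{delcompqiso-prop}.

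\emph{Identifying the extension.} Under the isomorphism above, the short exact sequence of Remark \ref{sesHDB-rema} for $\Aa(X)$ gives exactly the square-zero extension by $JH^*(\Aa(X))$ of $\mathrm{Hdg}^*(\Aa(X))$ described in Lemma \ref{sqzeroext-lemm}. Since $\theta_3$ is by definition the first obstruction for $\Aa(X)$, Corollary \ref{phi2split-coro} applies directly. It says that this extension splits via an algebra section of $H^*_{\mathrm{DB}}(\Aa(X)) \to \mathrm{Hdg}(\Aa(X))$. Transporting that section through the isomorphism gives the splitting of $H^*_{\mathrm{DB}}(X)$.
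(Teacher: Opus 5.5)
Your reduction is the same as the paper's. The paper takes $H^*_{\mathrm{DB}}(X)$ to be the cohomology algebra of $P_{\mathrm{DB}}(\Aa(X))$, comparing with Beilinson's complex only ``after forgetting the multiplicative structure'', and reads the statement off Corollary \ref{phi2split-coro}. With that convention nothing more is needed.

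The extra step you add, matching this with Beilinson's own product, is a fair concern. However, the argument you sketch for it is wrong as stated.

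\begin{itemize}
\item A quasi-isomorphism of mixed Hodge complexes between two algebras that is multiplicative only on cohomology need not be multiplicative on Deligne--Beilinson cohomology.
\item The functoriality of the $\alpha$-products (\ref{Del-prod}) holds only for strictly multiplicative maps, and the integration map out of the Thom--Whitney simple is not one.
\item Lemma \ref{Delcomp-equiv-lemm} and Proposition \ref{delcompqiso-prop} do not help either. They concern a single diagram or genuine maps of mixed Hodge diagrams.
\end{itemize}

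This failure is exactly what the paper is about. As in the proof of Theorem \ref{phi2-theo}, there is an isomorphism $A\cong H^*(A)$ in $D(\MHS)$ inducing the identity on cohomology. Yet for $A=\Aa(X)$ as in Example \ref{nontrivDBprod-exam}, it cannot be multiplicative on $H^*_{\mathrm{DB}}$: $H^*_{\mathrm{DB}}(H^*(A))$ splits, while $H^*_{\mathrm{DB}}(A)$ has a nonzero double DB Massey product. The defect is the class $\gamma(f)\in\mathrm{Ext}^1_{\MHS}$ appearing in that proof.

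What you actually need is that the comparison between $F(X)$ and $\Aa(X)$ is an isomorphism of algebras in $D(\MHS)$. That is, it must be multiplicative up to homotopies compatible with $W$, $F$ and the comparison morphisms; for the integration map this would come from the relevant homotopies being natural.

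This suffices because $H^n_{\mathrm{DB}}(A,p)=\Hom_{D(\MHS)}(\Bbbk(0),A(p)[2p+n])$, with product induced by the multiplication of $A$ in $D(\MHS)$. So the algebra $H^*_{\mathrm{DB}}(A)$ depends only on $A$ as an algebra in $D(\MHS)$, which is the same mechanism used in the proof of Corollary \ref{phi2split-coro}. That is what transports the product.
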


\subsection{Double DB products of algebraic cycles}
In this section, we compute the double DB Massey products of cohomology classes which come from algebraic cycles of a smooth complex projective variety. Let $X$ be a smooth complex projective variety and recall that the Chow groups of $X$ are defined by 
\[ CH^i(X) = \Zz^i(X)/ \sim_{\mathrm{rat}}, \]
where $\Zz^i(X)$ denotes the free abelian group generated by algebraic cycles in $X$ of codimension $i$ and $\sim_{\mathrm{rat}}$ denotes rational equivalence. Two algebraic cycles $A,B$ are rationally equivalent if there exists an algebraic cycle $Z \in \Zz^i(X \times \PP^1)$ and two points $a,b \in \PP^1$ such that $Z$ intersects transversally $X \times a$ and $X \times b$ and 
\[ A = Z \cap (X \times a) \quad \text{and} \quad B = Z \cap (X \times b). \] 
The graded Chow group $CH^*(X)$ admits the structure of an algebra by means of the intersection product and there are cycle maps to Betti and Deligne-Beilinson cohomologies: 
\begin{alignat*}{3}
  c : CH^p(X) &\to H^{p,p}(X)_\QQ, \qquad c_\mathrm{DB} : CH^p(&&X) \to H_\mathrm{DB}^{0}(X;p) \\
  Z &\mapsto [Z], &&\; \; Z \mapsto [[Z],[Z],0],
\end{alignat*}
where $[Z]$ is the cohomology class of the topological cycle of $Z$. Here $H^{p,p}(X)_\QQ$ denotes $H^{p,p}(X) \cap H^{2p}(X;\QQ)$. These maps are morphisms of algebras and make the following diagam commute
\[ \begin{tikzcd}
    CH^p(X) \arrow[d,"c_\mathrm{DB}"] \arrow[rd,"c"] & \\
    H^{0}_\mathrm{DB}(X;p) \arrow[r] & H^{p,p}(X)_\QQ.
\end{tikzcd}
\]
Denote by $CH^*_{\mathrm{hom}}(X)$ the kernel of $c$, that is, the chow-group of homologically trivial cycles. It is an ideal of $CH^*(X)$. Since $X$ is smooth projective, $H^*(X)$ is pure so
\[ H^{p,p}(X)_\QQ = \mathrm{Hdg}^{2p}(X,p), \quad J^p H^{2p-1}(X;\QQ) = \frac{H^{2p-1}(X;\CC)}{H^{2p-1}(X;\QQ) + F^p H^{2p-1}(X;\CC)}. \]
There is thus a commuting diagram
\[ \begin{tikzcd}
  CH^p_{\mathrm{hom}}(X) \arrow[r] \arrow[d,"\mathcal{AJ}"] & CH^p(X) \arrow[d,"c_\mathrm{DB}"] \arrow[rd,"c"]  & \\
  0 \to J^p H^{2p-1}(X;\QQ) \arrow[r] & H^{0}_\mathrm{DB}(X;p) \arrow[r] & \mathrm{Hdg}^{2p}(X,p) \to 0.
\end{tikzcd}\] 
The map $\mathcal{AJ}$ is the Abel-Jacobi map, which is usually defined as the following morphism. First, note that Serre duality implies that 
\[ J^p H^{2p-1}(X;\CC) \cong \frac{(F^{m-p+1} H^{2m-2p+1}(X;\CC))^\vee}{H_{2m-2p+1}(X;\QQ)}, \]
where $m$ is the complex dimension of $X$. The Abel-Jacobi map is given by
\begin{align*}
  \mathcal{AJ} : CH^p_{\mathrm{hom}}(X) &\to \frac{(F^{m-p+1} H^{2m-2p+1}(X;\CC))^\vee}{H_{2m-2p+1}(X;\QQ)} \\
  Z &\mapsto \int_{\Gamma} \qquad \partial(\Gamma) = Z.
\end{align*}

\begin{prop}
  \label{AJdobDel-prop}
  Let $A \in CH^{p_1}(X)$ and $B \in CH^{p_2}(X)$ be such that $c(A) \cdot c(B) = 0$. Then, the double DB Massey product of $c(A)$ and $c(B)$ is given by
  \[ \langle c(A), c(B) \rangle_\mathrm{DB} = \mathcal{AJ}(A \cdot B). \] 
\end{prop}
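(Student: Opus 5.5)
The plan is to exploit that $c_\mathrm{DB}$ is a morphism of algebras and that the double DB Massey product is, by definition, the class in $JH^*(X)$ (modulo the indeterminacy) mapping to $a_\mathrm{DB}\cdot b_\mathrm{DB}$ under $JH^*(X)\hookrightarrow H^*_\mathrm{DB}(X)$, for \emph{any} lifts $a_\mathrm{DB}, b_\mathrm{DB}$ of $a=c(A)$ and $b=c(B)$. Indeed, the well-definedness argument after the definition shows that changing the choices $a_\Bbbk,a_\CC,a_h,b_\Bbbk,b_\CC,b_h$ changes the lifts by elements of $JH^*$. It therefore only alters the product by $a\cdot J^{p_2}H^* + J^{p_1}H^*\cdot b$, which is quotiented out. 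So I will take the lifts $a_\mathrm{DB}=c_\mathrm{DB}(A)$ and $b_\mathrm{DB}=c_\mathrm{DB}(B)$. These are legitimate lifts because of the commutative triangle $c = \pi\circ c_\mathrm{DB}$.

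With these choices, multiplicativity of $c_\mathrm{DB}$ gives
\[ a_\mathrm{DB}\cdot b_\mathrm{DB} = c_\mathrm{DB}(A)\cdot c_\mathrm{DB}(B) = c_\mathrm{DB}(A\cdot B). \]
Since $c(A\cdot B)=c(A)c(B)=0$, the cycle $A\cdot B$ lies in $CH^{p_1+p_2}_{\mathrm{hom}}(X)$. The commutative diagram relating $CH_{\mathrm{hom}}$, $\mathcal{AJ}$ and $c_\mathrm{DB}$ then says that $c_\mathrm{DB}(A\cdot B)$ is the image of $\mathcal{AJ}(A\cdot B)\in J^{p_1+p_2}H^{2(p_1+p_2)-1}(X;\QQ)$ under the inclusion into $H^0_\mathrm{DB}(X;p_1+p_2)$. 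Injectivity of $JH^*(X)\to H^*_\mathrm{DB}(X)$ (Remark \ref{sesHDB-rema}) then identifies $\langle c(A),c(B)\rangle_\mathrm{DB}$ with the class of $\mathcal{AJ}(A\cdot B)$ in the quotient (\ref{double-DB-eq}).

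The main obstacle is justifying that $c_\mathrm{DB}$ is multiplicative for the product on $H^*_\mathrm{DB}(X)$ that the paper uses, namely the one induced by $P_\mathrm{DB}(\Aa(X))$, which Corollary \ref{ev-s-prod-lemm} identifies with Beilinson's $\cdot_\alpha$-products. I would argue this as follows. The cycle class in absolute Hodge cohomology is multiplicative for Beilinson's product (compatibility of cycle classes with intersection product, \cite{Beilinson}, \cite{DBEsnVie}). Moreover, the comparison between Beilinson's complex built from $F$ and $C_\mathrm{DB}(\Aa(X))$ respects products up to homotopy, since $F$ and $\Aa$ agree as multiplicative objects in $D(\MHS)$.

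A secondary point is that the diagram's identification of the restriction of $c_\mathrm{DB}$ to $CH_{\mathrm{hom}}$ with the classical integral-over-a-chain Abel--Jacobi map (after Serre/Poincaré duality) is the standard comparison. I will cite it rather than reprove it, noting only that the sign and normalisation conventions (the factor $(2\pi i)^p$) must match those used in defining $J^p$.
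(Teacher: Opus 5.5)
Your proposal is correct and follows essentially the same argument as the paper: take $c_\mathrm{DB}(A)$ and $c_\mathrm{DB}(B)$ as the lifts, use multiplicativity of $c_\mathrm{DB}$ to get $c_\mathrm{DB}(A\cdot B)$, and read off $\mathcal{AJ}(A\cdot B)$ from the commutative diagram, since $A\cdot B\in CH^{p_1+p_2}_{\mathrm{hom}}(X)$. The paper simply takes as given both the multiplicativity of $c_\mathrm{DB}$ and the identification of its restriction to $CH_{\mathrm{hom}}$ with $\mathcal{AJ}$, so your remarks on independence of the lift and on justifying these two inputs are supplementary rather than a different route.
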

\begin{proof}
  This is a simple consequence of the fact that $c$ is an algebra morphism and the commutativity of the previous diagram. Since 
  \[ c(A \cdot B) = c(A) \cdot c(B) = 0, \] 
  then $A \cdot B \in CH^{p_1 + p_2}_{\mathrm{hom}}(X)$. The double DB Massey product $ \langle c(A), c(B) \rangle_\mathrm{DB}$ is the element in
  \[J^{p_1 + p_2} H^{2(p_1 + p_2)-1}(X) \]
  that maps to $c_\mathrm{DB}(A) \cdot c_\mathrm{DB}(B) \in H^{0}_\mathrm{DB}(X;p_1+p_2)$. By the previous diagram, this is precisely $\mathcal{AJ}(A \cdot B)$.
\end{proof}
By Proposition \ref{AJdobDel-prop}, the smooth three-dimensional projective varieties considered in \cite{CCM} provide examples of non-trivial double DB Massey products. We write explicitly one of these examples.
\begin{exam}
  \label{nontrivDBprod-exam}
  Let $C \subset \CC \PP^3$ be a smooth non-rational curve and $P,Q \in C$, two distinct points. Consider $X = \mathrm{Bl}_{\tilde{C}} \mathrm{Bl}_{P,Q} \CC \PP^3$, where $\tilde{C}$ denotes the proper transform of $C$. Denote by $E_C$, $E_P$ and $E_Q$ the divisors of $C$, $P$ and $Q$ in $X$. These are cycles in $CH^1(X)$. Note that
  \[ [E_C] \cdot ([E_P] - [E_Q]) = 0 \in H^{2,2}(X)_\QQ \]
  By \cite{IntJac-ClGri}, $J^2 H^3(X;\CC) \cong J(C)$, where $J(C)$ is the jacobian of the curve $C$. Under this isomorphism, we have that
  \[ \mathcal{AJ}(E_C \cdot (E_P - E_Q)) = \int_P^Q \in J(C), \]
  which is non-trivial for most choices of $P$ and $Q$.
\end{exam}

\subsection{Multiplicative Chow-Künneth decomposition}
We show that the existence of a multiplicative Chow-Künneth decomposition of a smooth complex projective variety implies the vanishing of the first obstruction to mixed Hodge formality. Denote by $\Mm_{\mathrm{rat}}$ Grothendieck's category of pure Chow motives. This category is obtained in two steps starting with the category $Z_\sim \mathsf{SmProj}$ whose objects are (connected) smooth complex projective varieties and whose morphisms are degree $0$ correspondences. That is, given $X,Y$ smooth projective varieties,
\[ \Hom_{Z_\sim \mathsf{SmProj}}(X,Y) = \mathrm{Corr}^0(X,Y) = CH^{m}(X \times Y), \]
where $m$ is the complex dimension of $X$. Then $\Mm_{\mathrm{rat}}$ is obtained by taking the pseudo-abelian envolpe of $Z_\sim \mathsf{SmProj}_\QQ$ and inverting the motive of $\CC \PP^1$. The resulting category is symmetric monoidal and admits a symmetric monoidal functor from smooth projective varieties 
\begin{align*}
  \mathsf{SmProj}^{op} &\to \Mm_{\mathrm{rat}} \\
  X &\mapsto \mathfrak{h}(X).
\end{align*}
Any smooth projective variety $X$ is a cocomutative comonoid by the diagonal map $X \to X \times X$, so $\mathfrak{h}(X)$ is a commutative monoid in $\Mm_{rat}$. 
\begin{defi}
  A smooth projective variety $X$ of dimension $m$ is said to admit a \textit{Chow-Künneth decomposition} if its motive decomposes into a direct sum
  \[ \mathfrak{h}(X) = \mathfrak{h}^0(X) \oplus \cdots \oplus \mathfrak{h}^{2m}(X), \]
  such that for every $0 \leq i \leq 2m$, the Betti realization $H^*(\mathfrak{h}^i(X)) = H^i(X)$. Such a decomposition is said to be \textit{multiplicative} if the product $\mathfrak{h}(X) \otimes \mathfrak{h}(X) \to \mathfrak{h}(X)$, when restricted to $\mathfrak{h}^i(X) \otimes \mathfrak{h}^j(X)$ factors through $\mathfrak{h}^{i+j}(X)$.
\end{defi}
Deligne's functor to mixed Hodge complexes $\mathsf{SmProj}^{op} \to \Ho(\MHC_\QQ)$ (see section $3$ of \cite{DeHII}) factors through the category of pure Chow motives:
\[ \begin{tikzcd}
     \mathsf{SmProj}^{op} \arrow[rr] \arrow[rd] & & \Mm_{\mathrm{rat}} \arrow[dl]  \\
    & \Ho(\MHC_\QQ) & 
\end{tikzcd}\]
(see, for instance, chapter $5$ of \cite{MMLev}). Moreover, all functors in this diagram are symmetric monoidal. 
\begin{prop}
  If $X \in \mathsf{SmProj}$ admits a multiplicative Chow-Künneth decomposition, then the first obstruction to mixed Hodge formality $\phi_2$ vanishes.
\end{prop}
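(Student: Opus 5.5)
The strategy is to use Theorem \ref{phi2-theo}. Since $X$ is smooth projective, $H^*(X)$ is pure, so the first obstruction $\phi_2 = \pi(\theta_3)$ vanishes if and only if $\Aa(X) \cong H^*(X)$ in $\mathrm{CAlg}(D(\MHS))$. It therefore suffices to construct an isomorphism of commutative algebras in $D(\MHS) \simeq \Ho(\MHC_\QQ)$ between the image of $\mathfrak{h}(X)$ and $H^*(X)$. We work with the factorization $\mathsf{SmProj}^{op} \to \Mm_{\mathrm{rat}} \to \Ho(\MHC_\QQ)$, in which all functors are symmetric monoidal. Call the realization $R : \Mm_{\mathrm{rat}} \to \Ho(\MHC_\QQ)$. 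Then $R(\mathfrak{h}(X))$ is a commutative monoid in $D(\MHS)$ isomorphic, as a commutative algebra, to $\Aa(X)$ viewed in $\mathrm{CAlg}(D(\MHS))$. We use here that the forgetful functor $\Ho(\MHD) \to \mathrm{CAlg}(\Ho(\MHC))$ sends $\Aa(X)$ to the monoid whose product is induced by the diagonal, the same monoid obtained from $F$ and hence from $R(\mathfrak{h}(X))$.

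First, apply $R$ to the Chow--Künneth decomposition. This gives a decomposition $R(\mathfrak{h}(X)) \cong \bigoplus_i R(\mathfrak{h}^i(X))$ in $D(\MHS)$. Each summand has cohomology $H^i(X)$ concentrated in degree $i$. Since every complex of mixed Hodge structures is formal (Theorem $4.8$ of \cite{CG2}), or simply since an object of $D(\MHS)$ with cohomology in a single degree is isomorphic to that shifted cohomology, we get canonical isomorphisms $R(\mathfrak{h}^i(X)) \cong H^i(X)[-i]$ inducing the identity on cohomology. Summing them yields an isomorphism $f : R(\mathfrak{h}(X)) \to H^*(X)$ in $D(\MHS)$ with $f^* = id$.

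Second, use multiplicativity. The product restricted to $R(\mathfrak{h}^i) \otimes R(\mathfrak{h}^j)$ factors through $R(\mathfrak{h}^{i+j})$. So under $f$ the transported product is a sum of maps
\[ H^i(X)[-i] \otimes H^j(X)[-j] \to H^{i+j}(X)[-i-j] \]
in $D(\MHS)$. A morphism between objects concentrated in the same degree lies in $\Hom^0_\MHS$, because the $\mathrm{Ext}^1$ part in the short exact sequence for $H^0(\Gamma(-,-))$ lives in degree shift $-1$ and vanishes when source and target sit in equal degrees. Hence each such map is determined by its effect on cohomology, which is the cup product. The transported product therefore equals the product $\mu$ of $H^*(X)$. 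In the language of the proof of Theorem \ref{phi2-theo}, $\gamma(f)=0$, so $f$ is an algebra isomorphism, and therefore $\theta_3$, and with it $\phi_2$, is trivial.

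The main obstacle is the identification in the first paragraph: making precise that the algebra structure on $\Aa(X)$ in $\mathrm{CAlg}(D(\MHS))$ agrees with the one transported from $\mathfrak{h}(X)$ via the symmetric monoidal realization, and that $R$ is compatible with the derived tensor product. I would handle this by citing the symmetric monoidality of the diagram and the compatibility of $\Aa$ with $F$ after forgetting to $\Ho(\MHC)$, and then observing that the relevant product is the pullback along the diagonal in both cases.
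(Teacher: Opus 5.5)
Your proposal is correct and follows the paper's route: realize $\mathfrak{h}(X)$ through the symmetric monoidal functor $\Mm_{\mathrm{rat}} \to \Ho(\MHC_\QQ)$, identify $\Aa(X)$ with $H^*(X)$ in $\mathrm{CAlg}(D(\MHS))$, and conclude with the pure-cohomology converse of Theorem \ref{phi2-theo}. The paper only asserts that a multiplicative Chow--Künneth decomposition yields this algebra isomorphism, whereas you justify it: you split each $R(\mathfrak{h}^i(X)) \cong H^i(X)[-i]$, and multiplicativity then confines every component of the product to maps between objects in the same degree, which are determined by their effect on cohomology.
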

\begin{proof}
  Deligne's functor is equivalent to $\Aa : \mathsf{SmProj} \to \Ho(\mathsf{MHD}_\QQ)$ after forgetting to mixed Hodge complexes. If $X$ admits a multiplicative Chow-Künneth decomposition, then the image of $\mathfrak{h}(X)$ in $\Ho(\MHC_\QQ)$ is equivalent to $H^*(X)$ as objects in $\mathsf{CAlg}(D(\MHS))$. Thus, by the commutativity of the previous diagram, the same follows for $\Aa(X)$. The result then follows from Theorem \ref{phi2-theo}.
\end{proof}

\bibliographystyle{alpha}
\bibliography{bibliography}

\end{document}